\documentclass[12pt, a4paper, reqno]{amsart}
\usepackage{amsmath, amsthm, amscd, amsfonts, amssymb, graphicx, xcolor}
\usepackage{tikz}
\usetikzlibrary{cd}
\usepackage[bookmarksnumbered, plainpages, hidelinks]{hyperref}
\usepackage[margin=1in]{geometry}
\usepackage{verbatim}
\usepackage{mathtools}
\usepackage{mathabx}
\usepackage{mathrsfs}

\newtheorem{theorem}{Theorem}[section]
\newtheorem{lemma}[theorem]{Lemma}
\newtheorem{prop}[theorem]{Proposition}
\newtheorem{cor}[theorem]{Corollary}
\newtheorem{fact}[theorem]{Fact}
\theoremstyle{definition}
\newtheorem{definition}[theorem]{Definition}
\newtheorem{example}[theorem]{Example}
\newtheorem{assumption}[theorem]{Assumption}

\newtheorem{remark}[theorem]{Remark}
\newtheorem{notation}[theorem]{Notation}
\numberwithin{equation}{section}

\def\Ind#1#2{#1\setbox0=\hbox{$#1x$}\kern\wd0\hbox to 0pt{\hss$#1\mid$\hss}
\lower.9\ht0\hbox to 0pt{\hss$#1\smile$\hss}\kern\wd0}

\def\ind{\mathop{\mathpalette\Ind{}}}

\DeclareMathOperator{\locus}{locus}

\DeclareMathOperator{\acl}{acl} \DeclareMathOperator{\dcl}{dcl} \DeclareMathOperator{\scf}{SCF}
\DeclareMathOperator{\Alg}{Alg}
\DeclareMathOperator{\alg}{alg}
\DeclareMathOperator{\id}{id}
\DeclareMathOperator{\fr}{Fr}
\DeclareMathOperator{\aut}{Aut}
\DeclareMathOperator{\tp}{tp}

\newcommand{\be}{\mathcal{B}}
\newcommand{\ce}{\mathscr{K}}

\newcommand{\ec}{\ce\be_\varphi\operatorname{CF}}

\DeclareMathOperator{\spec}{Spec}
\newcommand{\ga}{\mathbb{G}_{\rm{a}}}

\newcommand{\ka}{k}

\newcommand{\ra}{\longrightarrow}

\newcommand{\bcf}{\mathcal{B}\operatorname{CF}}

\newcommand{\bv}{\be_\varphi}
\newcommand{\bvcf}{\bv\operatorname{CF}}
\newcommand{\bpac}{\bv\operatorname{PAC}}
\newcommand{\neat}{\operatorname{Fr}\left( \ker \pi_\be \right) = 0}

\begin{document}

\title{Existentially closed fields with operators in various categories}

\author[J. Gogolok]{Jakub Gogolok}
\thanks{Supported by the Narodowe Centrum Nauki grant no. 2023/49/N/ST1/02512.}
\address{Instytut Matematyczny, Uniwersytet Wrocławski, Wrocław, Poland}
\email{jakub.gogolok@math.uni.wroc.pl}
\urladdr{http://www.math.uni.wroc.pl/\textasciitilde gogolok/}

\subjclass[2020]{Primary 12L12; Secondary 12J10, 12H10}

\keywords{model companions, differential algebra, PAC structures}

\begin{abstract}
We deal with fields with certain operators - introduced by the author and Kowalski - which we call $\be$-fields. We are mostly interested in $\be$-fields which are existentially closed, possibly in some restricted category of $\be$-fields. This in particular involves seeking for a model companion, but also is related to so-called pseudo algebraically closed structures. We prove a very general result saying that in many cases being existentially closed (in a generalized sense) is an elementary property. This encompasses, generalizes and simplifies many results from the literature. We study the resulting first-order theories, most importantly we study dividing lines and quantifier elimination.
\end{abstract} \maketitle

\section{Introduction}

This note is about fields with operators, through the point of view of algebra and model theory. The two prototypical examples of fields with operators are differential fields (i.e. fields with a distinguished derivation) and difference fields (i. e fields with a distinguished endomorphism). These classes are immensely interesting, on one hand because they have many interesting model-theoretic properties and on the other hand because of their applications. Just to name a few applications:
\begin{enumerate}
    \item Various results in algebraic dynamics (see \cite{algDim}),
    \item Hrushovski's proof of the Manin-Mumford conjecture (see \cite{udiManin}),
    \item Ax-Lindemann-Weierstrass theorems for uniformizers of Fuchsian groups (see \cite{axLinde}).
\end{enumerate}

Both differential and difference fields are  instances of $\mathcal{D}$\textit{-rings structures} introduced by Moosa and Scanlon in \cite{MS1}  (an equivalent set-up of $B$\textit{-operators} was given in \cite{BHKK} by Beyarslan, Hoffmann, Kamensky and Kowalski). What this means is that both these classes are governed by a certain $\ka$-algebra in the following manner. Fix some base ring $\ka$. For a $\ka$-algebra $R$, let $R\left[ \varepsilon \right]$ be the ring of dual numbers over $R$, i.e. $R\left[ \varepsilon \right] = R\left[ X \right] / \left( X^2\right)$ and $\varepsilon$ is the coset of $X$. Then, the map $\partial\colon R \to R$ on a $\ka$-algebra $R$ is a $\ka$-derivation (i.e. a derivation vanishing on $\ka$) if and only if the map
$$R\to R\left[ \varepsilon \right], \ x\mapsto x+\partial(x)\varepsilon,$$
is a morphism of $\ka$-algebras. Since $R\left[ \varepsilon \right]=R\otimes_\ka \ka\left[ \varepsilon \right]$, we may say that ($\ka$-)derivations are governed by the algebra $\ka\left[ \varepsilon \right]$. Similarly, endomorphisms (fixing $\ka$) are governed by the algebra $\ka \times \ka$.


Certain natural examples of operators, e.g. derivations of the Frobenius map (see \cite{DerFro} or \cite{DerFro2}) do not fit in the above setup. Those can be described using \textit{ring schemes} leading to the notion of $\be$-operators developed by the author and Kowalski in \cite{beta-op}. 

In this paper we further develop the theory of $\be$-fields (i.e. fields with a $\be$-operator). In particular, we study the geometric notion of \textit{$\be$-varieties} (see Section \ref{sec: bvar}) and investigate \textit{iterative $\be$-fields} (see Section \ref{sec: iterative}). Having achieved that, the rest of the paper is devoted to the analysis of $\be$-fields which are existentially closed, possibly in a generalized sense e. g. only in regular extensions. The fundamental question is whether being existentially closed (in a generalized sense) is an elementary property. This question belongs to a well-established line of research in model theory and in particular entails the pursuit of model companions in algebraic model theory. We prove a very general result in this direction (see Section \ref{sec: general}, Theorem \ref{mainThm}) which entails and simplifies many results from the literature (see Remark \ref{remark: what we did}). We give many applications of this result (see Section \ref{sec: apl}) and also analyze some model-theoretic properties of the resulting objects.

\subsection*{Acknowledgment} This work is based on my PhD thesis written at the University of Wrocław, therefore I would like to thank my advisor Piotr Kowalski for his guidance throughout my PhD studies, numerous fruitful discussions and his patience. I thank the Wrocław model theory group for various comments given during the seminar talks I gave at the University of Wrocław. I would also like to thank the referees of my PhD thesis Moshe Kamensky, Rahim Moosa and Omar Le\'{o}n S\'{a}nchez for all their comments, suggestions and discussion. All those people significantly contributed to improving the work presented in this paper.

\section{\texorpdfstring{Preliminaries on $\mathcal{B}$-operators}{Preliminaries on B-operators}}
\label{chapter: B-op}
For the rest of the paper we fix a base field $\ka$ (although for some of the results it is enough to assume that $\ka$ is a ring). Let us give a quick overview of the theory of $\be$-fields presented in \cite{beta-op}. Some definitions differ from the ones given in \cite{beta-op} since we 

We start from the notion of an affine ring scheme over $\ka$, which is completely analogous the the notion of an affine group scheme.
\begin{definition}
An \textbf{affine ring scheme over $\ka$} is a representable functor from the category of $\ka$-algebras to the category of rings.
\end{definition}

\begin{example}
The affine line, represented by the polynomial ring $\ka \left[ X \right]$, is a ring scheme over $\ka$. As a functor it is simply the forgetful functor from the category of $\ka$-algebras to the category of rings. We will denote this ring scheme by $\mathbb{S}_{\ka}$.
\end{example}

\begin{definition}
A \textbf{$\ka$-algebra scheme} is a ring scheme $\mathcal{B}$ together with a morphism $\iota:\mathbb{S}_{\ka}\ra \mathcal{B}$ of ring schemes over $\ka$. As in the case of $\ka$-algebras, we often say simply ``$\mathcal{B}$ is a $\ka$-algebra scheme'', suppressing $\iota$ from the notation.
\end{definition}

\begin{remark}\label{ralg}
Let $\mathcal{B}$ be a $\ka$-algebra scheme and let $R$ be a $\ka$-algebra. The structure map evaluated on $R$:
$$\iota_R:\mathbb{S}_{\ka}(R)=R\ra \mathcal{B}(R)$$
gives $\mathcal{B}(R)$ the structure of an $R$-algebra.
\end{remark}

\begin{definition}\label{bdef}
A \textbf{coordinate $\ka$-algebra scheme} is a triple $(\mathcal{B},\iota, \pi )$ where:
\begin{enumerate}
\item $(\mathcal{B},+)=\ga^e$,
\item $(\mathcal{B},\iota )$ is a $\ka$-algebra scheme,
\item $\pi:\mathcal{B}\ra \mathbb{S}_{\ka}$ is a morphism of $\ka$-algebra schemes (that is: a morphism of ring schemes satisfying $\pi\circ\iota=\id$) such that under the above identification $(\mathcal{B},+)=\ga^e$, the morphism $\pi$ is the projection on the first coordinate.
\end{enumerate}
\end{definition}

For everything what follows we fix a coordinate $\ka$-algebra scheme $(\mathcal{B},\iota, \pi )$ with $\left( \be, + \right) = \ga^e$.

\begin{definition}\label{bopdef}
Let $R$ and $S$ be $\ka$-algebras. A \textbf{$\mathcal{B}$-operator on $R$} is a $\ka$-algebra map $\partial\colon R\to \mathcal{B}(R)$ such that $\pi_R\circ \partial=\id$. More generally, given a $\ka$-algebra map $f\colon R\to S$ a \textbf{$\mathcal{B}$-operator from $R$ to $S$ (of $f$)} is a $\ka$-algebra map $\partial \colon R\to \mathcal{B}(S)$ such that $\pi_S\circ \partial=f$. The \textbf{ring of constants} of such a $\partial\colon R\to \mathcal{B}(S)$ is defined as:
$$R^{\partial}:=\{r\in R\ |\ \partial(r)=\iota_S\left( f \left( r \right) \right) \}.$$
\end{definition}

\begin{definition}
A field with a $\mathcal{B}$-operator is called a \textbf{$\mathcal{B}$-field}. Similarly, we define \textbf{$\mathcal{B}$-rings}, \textbf{$\mathcal{B}$-field extensions}, etc.    
\end{definition}

\begin{remark}
\label{remark: beta-op = twisted B-op.}\label{mscontext}
Let us unwind the definition of a $\be$-operator. Since $\left( \be, + \right) = \ga^e$, we see that $\be$-operators from $R$ to $S$ correspond to certain sequences of maps $\partial = \left( \partial_1,\dotsc , \partial_e\right)$ from $R$ to $S$. More precisely, there are polynomials $F_1,\dotsc , F_e\in \ka \left[ X_1,Y_1,\dotsc , X_e, Y_e\right]$ such that $\partial$ as above is a $\be$-operator if and only if for each $i$, the map $\partial_i$ is additive and satisfies for any $x,y\in R$ the following identity:
$$\partial_i(xy)=F_i\left(\partial_1(x),\partial_1(y),\ldots,\partial_e(x),\partial_e(y)\right).$$
Of course, the polynomials $F_1,\dotsc , F_e$ are just the polynomials defining the multiplication law on $\be$.
\end{remark}

\section{\texorpdfstring{$\be$}{B}-varieties}\label{sec: bvar}
Let us work in a big algebraically closed field $\Omega$. Suppose $V\subseteq \Omega^n$ is an affine\footnote{Since we work in the affine realm, from now on we drop the adjective ``affine''} variety (or even just an algebraic set) over $\left( K, \partial \right)$. We would like be able to speak about point of the form $\partial\left( a \right) \in \Omega^{ne}$ for $a\in V\left( K \right)$. This would allow us to nicely code $\partial$-equations (i.e. equations involving $\partial$ and its iterations) using varieties, as any $\partial$-equation (for now involving no iterations of $\partial$) in a tuple of variables $X$ can be phrased as ``$\partial\left( X \right) \in W$'' for an appropriate algebraic set $W$.\footnote{Concretely, if the equation is $F\left( X, \partial_1\left( X \right), \partial_2\left( X \right), \dotsc \right) = 0 $ for a polynomial $F\left( X_0, X_1, X_2, \dotsc \right)$, then $W$ is simply the zero set of $F$.} Thus it would be nice to have a $K$-variety (or just an algebraic set) $\tau^\partial V$ such that for any $\be$-extension $K\subseteq L$ and any $a\in V\left( L \right)$ we have $\partial\left( a \right) \in \tau^\partial V\left( L \right)$, and moreover $\tau^\partial V$ is the smallest algebraic set with this property. From this description it is easy to guess what $\tau^\partial V$ should be. Namely, if $V$ is given by an ideal $I\trianglelefteq K\left[ X \right]$, then $\tau^\partial V$ is the zero set in $\Omega^{ne}$ of the ideal $\overline{I}:=\left( \partial_1\left( I \right)\cup \dotsc \cup \partial_e\left( I \right) \right) \trianglelefteq K\left[ \overline{X} \right]$ where $\overline{X} = \left( X_1,\dotsc, X_e \right)$. An important point is that $\tau^\partial$ may be defined as a certain adjoint functor (see \cite{beta-op} for details).

We want to adapt some classical geometric notions from algebraic and differential-algebraic geometry to the context of $\be$-algebra. Fix a $\be$-field $\left( K, \partial \right)$.
\begin{definition}
A \textbf{$\be$-variety over $\left( K, \partial \right)$} is a pair $\left( V, s \right)$ consisting of an affine $K$-variety $V$ together with a morphism $s\colon V \to \tau^\partial V$ which is a section of $\pi^\partial_V \colon \tau^\partial V \to V$, i.e. it satisfies $\pi^\partial_V \circ s = \id_V$.
\end{definition}
Since most of the time we will work only with varieties over our fixed $\be$-field $\left( K, \partial \right)$, we will often say ``$\be$-variety'' instead of ``$\be$-variety over $\left( K, \partial \right)$'', if no confusion arises. Also, we will say that $\left( V, s \right)$ is absolutely irreducible (resp. separable) if the underlying $K$-variety $V$ is such.

\begin{remark}\label{remark: B-varieties}
Let $\left( V, s \right)$ be a $\be$-variety. Then $s$ correspond to a $K$-algebra homomorphism $s^*\colon K\left[ \tau^\partial V \right] \to K\left[ V \right]$, which in turn corresponds by adjointness to a $K$-algebra homomorphism $\partial_s\colon K\left[ V \right] \to \be^\partial\left( K\left[ V \right] \right)$. One easily checks that under this correspondence the condition $\pi^\partial_V \circ s = \id_V$ translates into $\pi_{K\left[ V \right]} \circ \partial_s = \id_{ K\left[ V \right]}$, i.e. that $\partial_s$ is a $\be$-operator on $K\left[ V \right]$ extending $\partial$ on $K$. Thus a $\be$-variety structure on $V$ is the same as a $\be$-ring (over $\left( K ,\partial \right)$) structure on $K\left[ V \right]$.
\end{remark}

\begin{remark}
Remark \ref{remark: B-varieties} can be elegantly phrased as: the category of finitely generated (in the algebraic sense) $\be$-domains over $\left( K, \partial \right)$ is dual to the naturally defined category of $\be$-varieties. This is completely analogous to the case of classical algebraic geometry. In the same vain we could define (affine) $\be$-schemes \textit{et cetera}. We do not do any of this, since we really only use $\be$-varieties as a simple way of encoding certain $\be$-algebras over $K$. We point out however that e. g. difference schemes where fundamental in Hrushovski's work on the elementary theory of the Frobenius automorphism (see \cite{HrFro}).
\end{remark}

The reason we need $\be$-varieties is because they can code ``systems of $\partial$-equation''. This is analogous to algebraic geometry where a variety codes a system of polynomial equations. In the classical setting a $K$-rational point of a variety $V$ is the same as a solution in $K$ of the system coded by $V$. The $\be$-algebraic counterpart of this notion is the following.
 
\begin{definition}\label{def: B-points}
Let $\left( V, s \right)$ be a $\be$-variety and let $L$ be a $\be$-extension of $K$. An \textbf{$L$-rational $\be$-point $\left( V, s \right)$} is a point $a\in V\left( L \right)$ such that $s\left( a \right) = \partial\left( a \right)$. We denote the set of all $L$-rational $\be$-points by $\left( V, s \right)^\sharp \left( L \right)$.
\end{definition}

Elaborating on Remark \ref{remark: B-varieties} yields the following useful lemma.

\begin{lemma}\label{generic point is sharp}
Let $\left( V, s \right)$ be a $\be$-variety and let $a$ be a generic point of $V$ over $K$. Under the identification $K\left( V \right) = K\left( a \right)$ we have that $a\in \left( V, s \right)^\sharp \left( K\left( V\right) \right)$.
\end{lemma}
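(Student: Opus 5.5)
The plan is to unwind the correspondence of Remark \ref{remark: B-varieties} and compare two points of $\tau^\partial V$ with coordinates in $K\left( V\right)$: the point $s\left( a \right)$ and the point $\partial\left( a \right)$.

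First I fix the operator on $K\left( V\right)$. By Remark \ref{remark: B-varieties}, $s$ gives a $\be$-operator $\partial_s\colon K\left[ V \right] \to \be\left( K\left[ V \right] \right)$ extending $\partial$. Composing with $\be\left( K\left[ V \right]\right) \to \be\left( K\left( V \right)\right)$, it extends uniquely to $K\left( V \right)$; this uses the standard localisation property of $\be$-operators from \cite{beta-op}. So $K\left( V\right)$ is a $\be$-extension of $K$, and the statement implicitly refers to this structure. The generic point is $a = \left( x_1,\dotsc,x_n\right)$, where $x_j$ is the coordinate function $X_j + I \in K\left[ V \right]$. It therefore suffices to check that $s\left( a \right) = \partial_s\left( a \right)$ coordinatewise, and both sides already lie in $K\left[ V \right]^{ne}$.

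Next I make the adjunction explicit. The coordinate ring $K\left[ \tau^\partial V \right]$ is $K\left[ \overline{X} \right] / \overline{I}$, where $\overline{X} = \left( X_{j,i} \right)$ and the variable $X_{j,i}$ stands for $\partial_i\left( X_j \right)$. The adjunction sends a $K$-algebra map $\varphi\colon K\left[ \tau^\partial V\right] \to R$ to the $\be$-operator $\partial_\varphi\colon K\left[ V\right] \to \be\left( R\right)$ determined by
$$\left( \partial_\varphi \right)_i \left( x_j \right) = \varphi\left( X_{j,i} \right).$$
This is just the universal property of $\tau^\partial$: a $\be$-operator out of $K\left[ X\right]/I$ is determined by the images of the generators, subject to the relations $\overline{I}$. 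Applying this with $\varphi = s^*$ gives $\left( \partial_s \right)_i \left( x_j \right) = s^*\left( X_{j,i} \right)$.

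On the other hand, evaluating the morphism $s$ at the point $a$, viewed as the identity map $K\left[ V \right] \to K\left[ V\right]$, gives the point whose $\left( j,i \right)$ coordinate is $s^*\left( X_{j,i} \right)$. So $s\left( a\right) = \partial_s\left( a \right)$, and hence $a \in \left( V, s\right)^\sharp\left( K\left( V\right)\right)$.

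The main obstacle is matching conventions. One has to confirm that the adjunction of \cite{beta-op} really sends the variable $X_{j,i}$ to the $i$-th component of $\partial\left( x_j\right)$. This depends on the concrete description of $\tau^\partial V$ given above, via the ideal $\overline{I}$, and I would verify it directly from the definition of $\overline{I}$. A second point to settle is that the $\be$-operator extends from $K\left[ V\right]$ to its fraction field; this also rests on \cite{beta-op}. If that extension were unavailable, the lemma could instead be read with $K\left[ V\right]$ in place of $K\left( V \right)$.
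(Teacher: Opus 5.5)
Your proposal is correct and follows the paper's proof. Both arguments unwind the adjunction so that $\partial_s$ sends the coordinate function $x_j$ to $\left( s^*\left( X_{j,1} \right),\dotsc, s^*\left( X_{j,e} \right) \right)$, which is exactly the $j$-th block of coordinates of $s\left( a \right)$ for the generic point $a = X+I$.

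Your caution about extending $\partial_s$ from $K\left[ V \right]$ to $K\left( V \right)$ is justified, since the paper tacitly assumes this step. It is automatic when elements of $\be\left( K\left( V \right) \right)$ with nonzero image under $\pi$ are units (e.g. for local $\be$), but not in general; for difference fields it fails whenever the endomorphism of $K\left[ V \right]$ determined by $s$ is not injective. The core computation, however, is identical to the paper's.
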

\begin{proof}
Let $I$ be the ideal of $V$, write $K\left[ V \right] = K\left[ X \right] / I$ and identify $a$ with $X+I$. We have $K\left[ \tau^\partial V \right] = K\left[ \overline{X} \right] / \overline{I}$ where $\overline{X} = \left( X_1,\dotsc, X_e \right)$ and $\overline{I}=\left( \partial_1\left( I \right)\cup \dotsc \cup \partial_e\left( I \right) \right) \trianglelefteq K\left[ \overline{X} \right]$. The adjoint of $s^*$, i.e. $\partial_s$, is by definition the map $\partial_s\colon K\left[ \overline{X} \right] / \overline{I} \to \be\left( K\left[ X \right] / I \right)$ induced by sending $X$ to the tuple $\left( s^*\left( X_1 \right),\dotsc , s^*\left( X_e \right)\right)$, thus $\partial_s\left( X+I \right) = s^*\left( X_i \right) + \overline{I} = s\left( X+I \right)$, i.e. $X+I$ is a $\be$-point of $\left( V, s \right)$.
\end{proof}

\section{Iterative \texorpdfstring{$\be$}{B}-fields}\label{sec: iterative}

In this section we will first review various types of iterative operators appearing in the literature and how they generalize to $\be$-operators, see Subsection \ref{sec: iterative, sub: examples}. In Subsection \ref{sec: iterative, sub: algebra} we investigate some basic properties of iterative $\be$-operators. 

\subsection{Examples}\label{sec: iterative, sub: examples}

The most natural example of iterative operators are group actions on fields, which are of course extremely important in mathematics. Also, model-theoretic considerations of group actions on fields are very interesting and fruitful (most notably, the consideration of $\mathbb{Z}$-actions, i.e. the case of the theory $\operatorname{ACFA}$).

For a fixed finite group $G$, the theory of $G$-fields (i.e. fields with an action of $G$ by field automorphisms) is companionable, as proved by Hoffmann-Kowalski in \cite{HK1}. The datum of a $G$-field $K$, i.e. the sequence of automorphisms $\left( \sigma_g\colon K \to K | g\in G \right)$, forms a $\ka^G$-operator, where $\ka^G$ is equipped with a convolution product (alternatively, $\ka^G$ is the bialgebra dual to the group ring $\ka\left[ G \right]$). The iterativity condition (i.e. the statements $\sigma_{gh} = \sigma_g \circ \sigma_h$ for $g, h\in G$) can be expressed using the Hopf algebra structure of $\ka^G$, as we will see in the next paragraph.

More generally, we can consider actions of \textit{finite group schemes}. Given a finite (affine) group scheme $\mathfrak{g}$ over $\ka$ whose corresponding Hopf algebra is $H$, we can consider $\mathfrak{g}$-fields, i.e. fields $K$ with an action of $\mathfrak{g}$ on $\spec K$. This is the same as equipping $K$ with an $H$-operator $\partial\colon K\to K\otimes_\ka H$ (where $\pi\colon H\to \ka$ is the counit of $H$) such that the following diagram commutes
\vspace{0.5cm}
\begin{center}
\begin{tikzcd}[row sep=huge,column sep=huge,every label/.append
style={font=\small}]
R \arrow[r, "\partial"] \arrow[d, "\partial"'] & R\otimes_\ka H \arrow[d, "\partial \otimes \id"] \\
R\otimes_\ka H \arrow[r, "\id \otimes \mu"']   & R\otimes_\ka H \otimes_\ka H                    
\end{tikzcd}
\end{center}\vspace{0.5cm} where $\mu\colon H\to H\otimes_\ka H$ is the comultiplication map. Thus, finite group schemes provide a very natural way of describing iterativity of Moosa-Scanlon operators.

Model theory of $\mathfrak{g}$-fields was analysed by Hoffmann and Kowalski in \cite{HK2}, where they proved that the theory of $\mathfrak{g}$-fields has a model companion, which is moreover simple. Thus, the basic model-theoretic properties of the ``right'' notion of iterativity of $B$-operators are settled.

It seems now natural to ask if we can continue the pursuit of iterativity with $\mathcal{B}$-operators. We claim that it is indeed possible by means of \textit{comonads}. These are certain category-theoretical objects (appearing also in computer science, e. g. in functional programming, see \cite{monads}), which in our context can be seen as ``generalized Hopf algebras'' (see Remark \ref{remark: Hopfish}). 

\begin{definition}
A \textbf{comonad} on a category $\mathcal{C}$ is a comonoid object in the category of endofunctors on $\mathcal{C}$. More concretely, a comonad on $\mathcal{C}$ consists of a functor $F\colon \mathcal{C} \longrightarrow \mathcal{C}$ together with two natural transformations:
\begin{enumerate}
    \item the \textit{counit} $\varepsilon\colon F\longrightarrow \id_\mathcal{C}$,
    \item the \textit{comultiplication} $\mu\colon F\longrightarrow F^2$ (here $F^2$ is the composition of $F$ with itself),
\end{enumerate}
such that the following diagrams commute:
\vspace{0.5cm}\begin{center}
    \begin{tikzcd}[row sep=huge,column sep=huge,every label/.append
style={font=\small}]
F \arrow[r, "\mu"] \arrow[d, "\mu"'] & F^2 \arrow[d, "F\mu"] & F \arrow[rd, "\id"] \arrow[r, "\mu"] \arrow[d, "\mu"'] & F^2 \arrow[d, "F\varepsilon"] \\
F^2 \arrow[r, "\mu F"', shift right] & F^3                   & F^2 \arrow[r, "\varepsilon F"']                        & F                            
\end{tikzcd}
\end{center}\vspace{0.5cm}
\end{definition}

\begin{remark}\label{remark: Hopfish}
We are only interested in the case where $\mathcal{C}=\Alg_\ka$ is the category of $\ka$-algebras and $F = \be$ is a coordinate $\ka$-algebra scheme. In this case comonads can be seen as generalizations of Hopf algebras: if $B$ is a $\ka$-algebra, then a comonoid structure on the functor $-\otimes_\ka B$ is the same as a Hopf algebra structure on $B$ (the first diagram above expresses then the coassociativity of comultiplication and the second expresses the counit condition). Thus, comonads on $\Alg_\ka$ might be seen of as ``nonlinear'' Hopf algebras and are a natural candidate for a way of describing iterativity. Also, since our functors $\be$ are representable (by polynomial rings), a comonad amount to adding some structure to the representing object. This additional structure is the structure of a \textit{plethory}, see \cite{plethystic}.
\end{remark}

\begin{definition}\label{def: mu-iterative}
Let $\mathcal{B}$ be a coordinate $\ka$-algebra scheme and give it the structure of a comonad with comultiplication $\mu$ whose counit is the morphism $\pi \colon \be \to \mathbb{S}_\ka$. We say that a $\mathcal{B}$-operator $\partial \colon R \to \mathcal{B}\left( R \right)$ is \textbf{$\mu$-iterative} if the diagram

\vspace{0.5cm}\begin{center}
    \begin{tikzcd}[row sep=huge,column sep=huge,every label/.append
style={font=\small}]
R \arrow[r, "\partial"] \arrow[d, "\partial"']               & \mathcal{B}\left( R \right) \arrow[d, "\mathcal{B}\left( \partial \right)"] \\
\mathcal{B}\left( R \right) \arrow[r, "\mu_R"', shift right] & \mathcal{B}^2\left( R \right)                       
\end{tikzcd}
\end{center}\vspace{0.5cm}commutes.    
\end{definition}

\begin{example}
If we take $\be = H_\otimes$ where $B$ is a Hopf algebra and $\mu$ is induced by the comultiplication, then a field with a $\mu$-iterative $B$-operator is the same thing as a $\mathfrak{g}$-field in the sense of \cite{HK2}, where $\mathfrak{g}= \spec H$.
\end{example}


\begin{remark}\label{remark: comonads are nice}
As affine schemes $\be$ and $\be^2$ are just affine spaces over $\ka$ of dimension $e$ and $e^2$ respectively. Thus $\mu$ correspond to $e^2$ polynomials $\mu_{i,j}\left( \bar{X} \right)\in \ka\left[ \bar{X} \right]$ in $e$ variables. A $\mathcal{B}$-operator $\left( \partial_1, \dotsc, \partial_e \right) = \partial \colon R \to \mathcal{B}\left( R \right)$ is $\mu$-iterative if and only if for every $i,j\leqslant e$ the equality
$$\left( \partial_i \circ \partial_j \right) \left( x \right) = \mu_{i,j}\left( \partial_1\left( x \right), \dotsc, \partial_e \left( x \right) \right)$$
holds for every $x\in R$. In particular, finitely generated $\mu$-iterative field extension are finitely generated as pure field extensions.
\end{remark}

In a different direction, we can take a coordinate $\ka$-algebra scheme $\be$ and consider fields $K$ with $n$ distinct $\be$-operators $\partial_1,\dotsc \partial_n\colon K\to \be\left( K \right)$ and demand that they all (or just some of them) commute.
In yet another direction, we can consider fields with a $\be$-field $\left( K, \partial \right)$ together with an action of a comonad via $\be$-automorphisms. 

\subsection{Some algebraic properties of \texorpdfstring{$\mathcal{B}_\varphi$}{Bphi}-fields}\label{sec: iterative, sub: algebra}

Motivated by Section \ref{sec: iterative, sub: examples}, we give the following definition.

\begin{definition}\label{def: phi}
An \textbf{iterativity condition} is a universal $\mathcal{L}_\be$-sentence $\varphi$ of the form form $\left( \forall x \right) \theta (x)$ where $\theta ( x )$ is a quantifier-free $\mathcal{L}_\be$-formula in one free variable $x$, such that the following holds:
\begin{enumerate}
    \item for any $\be$-field  $K$ generated as a field by a set $A$ we have that $K\models \varphi$ if and only if $K \models \theta ( a )$ for any $a\in A$,
    \item for any field $K$, the ``zero $\be$-field'' $\left( K , \iota_K \right)$ satisfies $\varphi$.
\end{enumerate}
\end{definition}

\begin{example}\label{example: iterativity conditions}
If $\left( \be , \mu \right)$ is a comonad, then the sentence describing $\mu$-iterative $\be$-fields is $\varphi = ``\left( \forall x \right) \theta (x)"$ where $\theta (x)$ is the conjunction of the equalities appearing in Remark \ref{remark: comonads are nice}. One immediately checks that $\varphi$ is an iterativity condition.
\end{example}

\begin{definition}
Let $\left( K, \partial \right)$ be a $\be$-field and let $\left( V, s \right)$ be a $\be$-variety over $K$. We say that $K$ is a \textbf{$\be_\varphi$-field} it satisfies $\varphi$. We say that a $\be$-variety $\left( V, s \right)$ over a $\bv$-field $\left( K, \partial \right)$ is a \textbf{$\mathcal{B}_\varphi$-variety} if the corresponding $\be$-field $K\left( V \right)$ is a $\be_\varphi$-field.
\end{definition}

Since all examples from Section \ref{sec: iterative, sub: examples} are given in a functorial manner, it is easy to check that they all give rise to iterativity conditions in the above sense.

\begin{remark}\label{remark: coding}
Being a $\mathcal{B}_\varphi$-variety is definable condition, by which mean the following. Fix a $\be_\varphi$-field $K$ and any natural numbers  $n, k, d$. Let us consider all $K$-algebraic sets $V\subseteq \Omega^n$ together with a morphism $s\colon V \to \tau^\partial V$ such that the ideal $I_K\left( V \right)$ is generated by at most $k$ polynomials of degree at most $d$ and that the morphism $s$ is given by polynomials of degree at most $d$. Having fixed that, we can code all the polynomials mentioned in the previous sentence using tuples $\bar{c}$ of of fixed length of elements of $K$. This length depends only on $n, k, d$. Thus we can code $V$ via a tuple of elements of $K$ (in a highly non-unique manner). Then, there is some $\mathcal{L}_\be$-formula $\psi\left( \bar{x} \right)$ depending only on $n, k, d$ such that $\left( V, s \right)$ is a $\be_\varphi$-variety if and only if $K\models \psi\left( \bar{c} \right)$. To see that recall the classical results of van den Dries (see \cite{vdD}) on bounds for polynomial ideals. Namely, for any $n, k, d \in \omega$ there is a number $N$ such that for any field $K$ and any polynomials $h, f_1, \dotsc , f_k \in K\left[ X_1, \dotsc , X_n \right]$ of degree at most $d$ \textbf{if} there are some $g_1\dotsc , g_k\in K\left[ X_1, \dotsc , X_n \right]$ such that
$$h=g_1 f_1 +\dotsc + g_k f_k,$$
\textbf{then} there are such $g_1,\dotsc, g_k$ of degree at most $N$. Using this, one can write down a formula $\chi\left( \bar{x} \right)$ such that $K\models \chi\left( \bar{c} \right)$ if and only if $V$ is $K$-irreducible, the image of $s$ is a subset of $\tau^\partial V$ and that $\left( V, s \right)$ is a $\be$-variety. Since $\varphi$ is universal can be checked on generators (see item (1) in Definition \ref{def: phi}), there is also a formula $\psi\left( \bar{x} \right)$ asserting that moreover $\left( V, s \right)$ is a $\bv$-variety.
\end{remark}

We introduce the following useful class of pairs $\left( \be , \varphi \right)$.

\begin{definition}\label{def: nice}
We say that the pair $\left( \be , \varphi \right)$ is \textbf{nice} if one of the following holds:
\begin{enumerate}
    \item $\left( \be , \varphi \right)$ comes from the action of a comonad on a coordinate $\ka$-algebra scheme $\be_0$ satisfying $\fr  (\ker  \pi_{\be_0}) = 0$.
    \item $\operatorname{char} \ka = 0$ and $\be$ is local and $\varphi$ is trivial (i.e. holds always).
 \end{enumerate}
\end{definition}

\begin{prop}\label{prop: pure assumption}
Assume that $\left( \be , \varphi \right)$ is nice. Let $\left( K, \partial_K \right)\subseteq \left( L, \partial_L \right)$ be $\be_\varphi$-fields, let $a\in L$ be a finite tuple and set $K_1 = K\left( \partial_L a \right)$. Then there is a $\be$-operator $\partial\colon K_1 \to \be\left( K_1 \right)$ such that $\partial\left( a \right) = \partial_L \left( a \right)$, $\partial|_K = \partial_K$ and $\left( K_1, \partial \right)$ is a $\be_\varphi$-field.
\end{prop}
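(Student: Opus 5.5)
Here $\partial_L a$ denotes the tuple $\left( \partial_{L,1}(a),\dotsc,\partial_{L,e}(a)\right)$; since $\pi$ is the projection on the first coordinate, its first block is $a$, so $a\in K_1$. The field $K_1$ need not be closed under $\partial_L$, because $\partial_L$ of the new coordinates may leave $K_1$. The plan is therefore not to restrict $\partial_L$, but to build a new operator on $K_1$ that agrees with $\partial_L$ on $K$ and on $a$, and to make the iterativity condition hold. I treat the two cases of Definition \ref{def: nice} separately.

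\emph{Case (2): $\operatorname{char}\ka=0$, $\be$ local, $\varphi$ trivial.} Here only a $\be$-operator on $K_1$ is needed. Write $K_1=K(a)(b)$ with $b$ the remaining coordinates of $\partial_L a$.
\begin{enumerate}
\item On $K(a)\subseteq L$ take $\partial=\partial_L|_{K(a)}$, which is a $\be$-operator from $K(a)$ to $L$ whose values on $K(a)$ lie in $\be(K_1)$.
\item Extend from $K(a)$ to $K_1$ one generator at a time. Locality of $\be$ makes $\ker\pi$ a nilpotent ideal. In characteristic $0$, $K(a)\subseteq K_1$ is separable, so it splits as a separably algebraic extension of a purely transcendental one.
\item Along separably algebraic steps, extend by the standard Hensel-type lifting through the nilpotent thickening $\pi_{K_1}\colon\be(K_1)\to K_1$.
\item For transcendental elements, first choose the values of $\partial$ freely in $\be(K_1)$ lying over the identity. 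This is possible since polynomial rings are free.
\end{enumerate}
The constraints $\partial|_K=\partial_K$ and $\partial(a)=\partial_L(a)$ only prescribe values on $K(a)$, so they survive these extensions.

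\emph{Case (1): $\varphi$ comes from a comonad $\mu$ on $\be_0$ with $\neat$.} Here iterativity says $\partial_i\partial_j(x)=\mu_{i,j}(\partial x)$ (Remark \ref{remark: comonads are nice}). The idea is that $K_1$ is then already closed under $\partial_L$. Indeed, for $b=\partial_{L,j}(a)$ we get $\partial_{L,i}(b)=\mu_{i,j}(\partial_L a)\in K(\partial_L a)=K_1$. Elements of $K$ stay in $K$. By the product formulas of Remark \ref{mscontext} and additivity, the set of $x\in L$ with $\partial_L(x)\in\be(K_1)$ is a subring containing these generators. The main point is quotients: one must show $\partial_L(1/y)\in\be(K_1)$. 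This holds because $\partial_L(y)$ is a unit in $\be(L)$ and $\be(K_1)\to\be(L)$ detects units. An element of $\be(K_1)$ whose image under $\pi$ is nonzero should be invertible already in $\be(K_1)$. I expect this to need $\neat$, which rules out nonreduced phenomena such as derivations of Frobenius, together with the fact that $\ker\pi$ has the right shape.

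Once closure holds, $\partial:=\partial_L|_{K_1}$ works in case (1). The $\mu$-identities hold on $L$ and hence on $K_1$, and $\varphi$ is universal. The main obstacle is this invertibility step. My first approach is to show that $\ker\pi_{\be_0}$ is nilpotent, or at least that $\be_0(K_1)\cap\be_0(L)^\times=\be_0(K_1)^\times$, by a linear-algebra argument: $\be(K_1)\to\be(L)$ is the base change $K_1^e\to L^e$, and inverses solve a linear system over $K_1$ whose solution in $L$ is unique. Uniqueness forces the solution to lie in $K_1$. This last argument in fact works in general and may even make $\neat$ unnecessary.
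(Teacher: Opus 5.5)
\textbf{Case (2).} This part is correct and is the paper's argument: extend $\partial_L|_{K(a)}\colon K(a)\to\be(K_1)$ along the separable extension $K(a)\subseteq K_1$, using that $\ker\pi$ is nilpotent. Splitting the extension into a purely transcendental part and a separably algebraic part is more accurate than the paper's ``separable, hence \'{e}tale''.

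\textbf{Case (1): the gap.} You read Definition \ref{def: nice}(1) as saying that $\varphi$ is $\mu$-iterativity for a comonad structure on $\be$ itself. Under that reading your closure argument is fine. As the paper uses case (1), however, it means $\be_0$-operators with $\neat$ on which a comonad acts, and the comonad may be trivial. So $\be=\be_0$ with $\neat$ and $\varphi$ trivial must count as nice. This is how Remark \ref{remark: what we did} covers the $B$-operators with $\fr(\ker\pi_B)=0$, and the proof of Theorem \ref{thm: new axioms} applies Theorem \ref{mainThm2} to exactly such $\be$.

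For these $\be$ your closure claim is false. Take $L=\mathbb{F}_p(t_0,t_1,\dotsc)$ with the derivation $\delta t_i=t_{i+1}$, $K=\mathbb{F}_p$ and $a=t_0$. Then $K_1=\mathbb{F}_p(t_0,t_1)$, but $\delta_L(t_1)=t_2\notin K_1$. So a new operator on $K_1$ must be constructed; this is the ``proceed as above'' step in the paper's proof. Your case (2) method cannot supply it, because in characteristic $p$ the extension $K(a)\subseteq K_1$ need not be separable. For example, take $K=\mathbb{F}_p(b^p)$ with $\delta=0$ inside $L=\mathbb{F}_p(a,b)$ with $\delta a=b$ and $\delta b=0$. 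Then $K_1=K(a,b)$ is purely inseparable of degree $p$ over $K(a)$.

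\textbf{The missing idea.} You need an extension argument that uses $\neat$ in place of separability.
\begin{enumerate}
\item Every $n\in\ker\pi_L$ satisfies $n^p=0$, and $\be(L)$ has characteristic $p$. Writing $\partial_L(x)=\iota(x)+n$ gives $\partial_L(x^p)=\iota(x^p)$.
\item Hence $\partial_L$ maps $F:=K(a)K_1^p$ into $\be(K_1)$. For quotients, note that $\ker\pi_{K_1}$ is nil, so every element of $\be(K_1)$ with nonzero image under $\pi$ is a unit.
\item The extension $F\subseteq K_1$ is finite and purely inseparable of exponent one. For a $p$-basis $b_1,\dotsc,b_r$ we get $K_1\cong F[X_1,\dotsc,X_r]/(X_1^p-b_1^p,\dotsc,X_r^p-b_r^p)$.
\item Sending $X_i\mapsto\iota(b_i)$ extends $\partial_L|_F$ to a $\be$-operator on $K_1$, since $\iota(b_i)^p=\iota(b_i^p)=\partial_L(b_i^p)$.
\end{enumerate}
When a nontrivial comonad acts on $\be_0$, this extension still has to be made compatible with the action. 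The closure from Remark \ref{remark: comonads are nice} handles the iterative part; the paper's own proof only indicates this step.

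\textbf{A smaller point.} Your ``linear system'' argument for $\be(K_1)\cap\be(L)^\times=\be(K_1)^\times$ assumes that multiplication on $\be(R)=R^e$ is $R$-bilinear. That fails, for instance, for derivations of Frobenius, which do satisfy $\neat$, contrary to your remark. There $(x_1,x_2)(y_1,y_2)=(x_1y_1,\,x_1^py_2+y_1^px_2)$. Under $\neat$, the nil-kernel argument in step 2 is the right justification.
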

\begin{proof}

Assume we are in item (2) of Definition \ref{def: nice}, then the extension $K\left( a \right) \subseteq K_1$ is separable (since we are in characteristic zero), hence \'{e}tale, thus the $\be$-operator $\partial\colon K\left( a \right) \to \be\left( K_1 \right)$ extends to a $\be$-operator on $K_1$ and since $\varphi$ is trivial, the resulting $\be$-field $K_1$ is a $\bv$-field.

Now, if we are in item (1) of Definition \ref{def: nice}, then we proceed as above and use Remark \ref{remark: comonads are nice}.\end{proof}

Let $\left( K, \partial \right)\subseteq \left( L, \partial \right)$ be $\be_\varphi$-fields and let $a\in L$ be a finite tuple such that $L=K\left( a \right)$. In general there is no well-defined ``$\be$-locus of $a$ over $K$'', as $K\left[ a \right]$ is not necessarily closed under $\partial$. The following lemma says however, that (for some $\be$) we can construct the $\be$-locus at the cost of extending the tuple $a$. 

\begin{lemma}\label{lemma: Blocus exist}
Assume that either $\be$ is local or $\left( \be, \varphi \right)$ is nice. Let $\left( K, \partial \right)\subseteq \left( L, \partial \right)$ be $\be_\varphi$-fields and let $a\in L$ be a finite tuple such that $L=K\left( a \right)$. Then $a$ can be extended to a finite tuple $b$ so that $\partial$ restricts to a $\be$-operator on $K\left[ b \right]$.
\end{lemma}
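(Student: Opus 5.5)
The strategy is to enlarge $a$ by the coordinates of the values of $\partial$ on $a$ and on the inverses of finitely many denominators, and to use the defining identities of a $\be$-operator (Remark \ref{mscontext}) to see that the resulting finitely generated $K$-algebra is $\partial$-closed. The two hypotheses enter at exactly one point: whether the new coordinates need further closing up.

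\textbf{Step 1: a finite generating set for $L$.}
Write $\partial = (\partial_1,\dotsc,\partial_e)$. Since $L = K(a)$, each coordinate of $\partial_j(a_i)$ is a rational function of $a$ over $K$. Fix a single nonzero $d\in K[a]$ that is a common denominator for all the $\partial_j(a_i)$ and also for $\partial_j(d^{-1})$. We will see below that a suitable choice of $d$ also clears the denominators of $\partial_j(d^{-1})$. Set $b = (a, d^{-1})$.

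\textbf{Step 2: $\partial$ preserves $K[b]$.}
The set $\{x\in L : \partial(x)\in \be(K[b])\}$ is a $K$-subalgebra of $L$. It is closed under sums because each $\partial_i$ is additive. It is closed under products because $\partial_i(xy) = F_i(\partial_1 x,\partial_1 y,\dotsc)$ with $F_i$ polynomials over $\ka$. It contains $K$ because $\partial(K)\subseteq \be(K)$. Hence it suffices to check the generators $a_i$ and $d^{-1}$.

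\textbf{Step 3: the generator $d^{-1}$ — main obstacle.}
This is the key step. We have $\partial(d^{-1}) = \partial(d)^{-1}$, the inverse taken in the ring $\be(L)$.

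In the local case, $\be(L)$ is a local $L$-algebra with residue map $\pi_L$. An element $u\in\be(L)$ with $\pi_L(u)$ invertible is a unit, and its inverse can be written polynomially in the coordinates of $u$ and in $\pi_L(u)^{-1}$. To see this, write $u = \pi(u)\,(1+n)$ with $n$ nilpotent; then the geometric series for $(1+n)^{-1}$ terminates, because $\ker\pi$ is a nilpotent ideal in the finite local algebra $\be(L)$. Apply this with $u = \partial(d)$. Then $\pi(\partial d) = d$, and the coordinates of $\partial(d)$ lie in $K[a, d^{-1}]$, since $d$ is a polynomial in $a$ and the $a_i$ are already handled. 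So the coordinates of $\partial(d^{-1})$ lie in $K[a, d^{-1}]$, and no new denominators appear. For this reason it suffices to take $d$ clearing only the denominators of the $\partial_j(a_i)$.

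\textbf{Step 4: the nice, non-local case (item (1) of Definition \ref{def: nice}).}
Here $\be(L)$ need not be local; for instance $\ka^G$ gives a product of copies of $L$. In that case $\partial(d)^{-1}$ requires inverting the other coordinates, which are of the form $\sigma_g(d)$. The plan is then:
\begin{enumerate}
\item Use iterativity (Remark \ref{remark: comonads are nice}): every $\partial_i\partial_j$ is a polynomial in $\partial_1,\dotsc,\partial_e$. Hence the set $S$ of coordinates of $\partial(a)$ and $\partial(\partial_j a)$ is already closed up to polynomials. More precisely, $K[a, \partial(a)]$ is stable under $\partial$ on its generators $a_i$ and $\partial_j(a_i)$, with values again in $K[a,\partial a]$.
\item Replace $a$ by $b_0 = (a, \partial(a))$. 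The ring $K[b_0]$ is then $\partial$-stable by the subalgebra argument of Step 2, with no inverses needed at all.
\item Append any finitely many extra elements only if required; by (1) none should be.
\end{enumerate}
So in the nice case the proof should reduce to the observation that iterativity makes $K[a,\partial a]$ closed, with $b = (a,\partial a)$. Using Proposition \ref{prop: pure assumption}, we may also view this inside $K(\partial a) = L$.

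Of the steps above, I expect the main obstacle to be the local-case inverse computation in Step 3, namely justifying that $\ker \pi$ is nilpotent, so that $\partial(d)^{-1}$ is polynomial over $K[a, d^{-1}]$.
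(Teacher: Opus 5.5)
\textbf{Gap: case (1) of Definition \ref{def: nice}.} As Remark \ref{remark: what we did}(4) and the paper's last sentence (``combine the proof of both cases above'') indicate, this case covers a $\be_0$-operator with $\fr(\ker\pi_{\be_0})=0$ \emph{together with} a comonad acting by $\be_0$-automorphisms. A typical instance is a derivation $\delta$ plus an action of a finite group $G$. Such a $\be$ is neither local nor a comonad whose iterativity is all of $\varphi$. So Step 3 does not apply, and your key claim in Step 4 fails. That claim says every $\partial_i\circ\partial_j$ is a polynomial in $\partial_1,\dotsc,\partial_e$. It holds for the comonad coordinates, but nothing constrains, e.g., $\delta\circ\delta$. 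Hence $K[a,\partial a]$ need not be $\partial$-stable. Take $K=\mathbb{F}_p$, $L=\mathbb{F}_p(x,y)$ with $\delta x=y/x$, $\delta y=0$, and $G$ acting trivially. Then $K[a,\partial a]=\mathbb{F}_p[x,y,y/x]$, which is spanned by the monomials $x^iy^j$ with $j\geq 0$ and $i\geq -j$. But $\delta(y/x)=-y^2/x^3$ is not in it.

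\textbf{Repair.} Run your two constructions in sequence.
First, $n^p=0$ for $n\in\ker\pi_{\be_0}$, so your Step 3 applies to the $\be_0$-component. It gives $f\in K[a]$ with $K[a,1/f]$ stable under the $\be_0$-operator.
Second, let $b$ consist of all comonad-coordinates of the entries of $(a,1/f)$. Iterativity makes $K[b]$ stable under the comonad part, by your Step 4 argument.
Third, the compatibility built into $\varphi$ (e.g.\ $\delta\sigma_g=\sigma_g\delta$) shows that the $\be_0$-coordinates of each entry of $b$ are comonad-coordinates of elements of $K[a,1/f]$. Hence they are polynomials over $K$ in the entries of $b$.

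\textbf{What is already correct.} Steps 2--3 correctly handle local $\be$. You only need each element of $\ker\pi_L$ to be nilpotent, not that $\be(L)$ is a finite local algebra. Step 4 correctly handles a pure comonad. Your choices, $b=(a,d^{-1})$ in the local case and $b=\partial(a)$ in the comonad case, are the reverse of the paper's sketch, and yours are the ones that work. The example above with $G$ trivial shows that $b=\partial(a)$ fails for local $\be$. For the comonad case, take the involution $x\mapsto x$, $y\mapsto x/y$ of $\mathbb{Q}(x,y)$. Then $f=y$ clears the denominators of $\partial(a)$, yet $\sigma(1/y)=y/x\notin\mathbb{Q}[x,y,1/y]$.
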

\begin{proof}
Assume $\be$ is local. Note that we have a $\be$-operator $\partial\colon K\left[ a \right]\to \be\left( K\left[ \partial a \right] \right)$ and since the extension $K\left[ a \right]\subseteq K\left( \partial a \right)$ is \'{e}tale (because it is a localization) this $\be$-operator extends uniquely to a $\be$-operator $\partial'$ on $K\left[ a \right]\subseteq K\left( \partial a \right)$. 
But by uniqueness we must have $\partial' = \partial$ so the tuple $b:=\partial\left( a \right)$ works. If $\be$ is a comonad, then pick $f\in K\left[ a \right]$ such that $\partial\left( a \right) \in \be\left( K\left[ a, 1/f \right] \right)$. By Remark \ref{remark: comonads are nice} $\partial\left( 1/f \right) \in \be\left( K\left[ a, 1/f \right] \right)$, so we can take $b:= \left( a, 1/f \right)$. Finally, if we are in the case (1) of Definition \ref{def: nice}, then we can combine the proof of both cases above
\end{proof}

Amalgamation also transfers easily to the case of $\bv$-fields.

\begin{prop}\label{amal}
Assume $\be$ is local and $\varphi$ is an iterativity condition. Then class of $\be_\varphi$-fields has the amalgamation property in the language $\mathcal{L}_\be^\lambda$ - in other words, any two separable extensions of a $\bv$-field can be amalgamated into a separable extension.
\end{prop}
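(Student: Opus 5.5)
Let $\left( K, \partial_K \right)\subseteq \left( L_1, \partial_1 \right)$ and $\left( K, \partial_K \right)\subseteq \left( L_2, \partial_2 \right)$ be separable extensions of $\bv$-fields. The plan is to amalgamate first in the category of $\be$-fields, using the known amalgamation for $\be$-fields with $\be$ local (as in \cite{beta-op}), and then check that $\varphi$ survives by exploiting the fact that it can be verified on generators.

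The construction proceeds as follows.
\begin{enumerate}
    \item Replace $L_2$ by a $K$-isomorphic copy so that $L_1$ and $L_2$ are linearly disjoint over $K$ inside $\Omega$. Separability of $L_1/K$ makes $R:=L_1\otimes_K L_2$ a domain, and its fraction field $M:=L_1L_2$ is separable over both $L_1$ and $L_2$.
    \item Equip $R$ with the tensor product $\be$-operator. The two compositions $L_i\to \be\left( L_i \right)\to \be\left( R \right)$ agree on $K$. Since $\be$ commutes with tensor products of $\ka$-algebras (it is represented by a polynomial ring, so $\be\left( R \right)$ is the pushout of $\be\left( L_1 \right)$ and $\be\left( L_2 \right)$ over $\be\left( K \right)$), we get a $\ka$-algebra map $\partial\colon R\to \be\left( R \right)$ with $\pi_R\circ\partial=\id$.
    \item Extend $\partial$ from $R$ to $M=\operatorname{Frac}\left( R \right)$. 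Because $\be$ is local, an element $\partial\left( r \right)$ with $r\neq 0$ has invertible first coordinate $\pi\left( \partial r \right)=r$, so it is invertible in $\be\left( M \right)$. Hence $\partial$ extends uniquely to $M$; this is the localization/\'{e}taleness step already used in the proof of Lemma~\ref{lemma: Blocus exist}.
    \item Confirm that $M$ is a separable extension of each $L_i$, which gives amalgamation in $\mathcal{L}_\be^\lambda$. If $\lambda$-functions are involved, one uses that a separable extension is exactly the condition that the $\lambda$-functions of $L_i$ are those induced from $M$.
\end{enumerate}

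It remains to show that $\left( M, \partial \right)\models \varphi$. The field $M$ is generated as a field by $A:=L_1\cup L_2$. For $a\in L_i$, the formula $\theta\left( a \right)$ is quantifier-free with parameter in $L_i$. Since $\left( L_i,\partial_i \right)\subseteq \left( M,\partial \right)$ is an $\mathcal{L}_\be$-substructure and $L_i\models\varphi$, we get $M\models\theta\left( a \right)$. Item (1) of Definition~\ref{def: phi} then yields $M\models \varphi$.

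The main obstacle is step (2): making sure the tensor product construction really is compatible with $\be$. This needs the identification $\be\left( L_1\otimes_K L_2 \right)\cong \be\left( L_1 \right)\otimes_{\be\left( K \right)}\be\left( L_2 \right)$ as rings, or at least a well-defined induced map. I would try to avoid proving this in general by instead using the universal property directly: $\be\left( R \right)$ receives maps from $\be\left( L_1 \right)$ and $\be\left( L_2 \right)$ that agree on $\be\left( K \right)$, and the maps $L_i\to\be\left( R \right)$ agree on $K$. Therefore the universal property of the tensor product of $K$-algebras (over $\ka$) gives $\partial$ on $R$, and this is all we need.
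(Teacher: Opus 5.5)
Your step (1) is false, and everything after it is built on the field $M=\operatorname{Frac}(L_1\otimes_K L_2)$. Separability of $L_1/K$ only makes $L_1\otimes_K L_2$ \emph{reduced}. To get a domain you need something like regularity of one of the extensions.

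For a counterexample, let $L$ be a proper finite separable extension of $K$, give $K$ and $L$ the zero operators (these are $\bv$-fields by item (2) of Definition~\ref{def: phi}), and take $L_1=L_2=L$. Then $L\otimes_K L\cong L\times L[x]/(g)$ with $\deg g\geqslant 1$, so it is not a domain. Consequently no $K$-conjugate of $L_2$ inside $\Omega$ is linearly disjoint from $L_1$ over $K$. Replacing $L_2$ by a copy only lets you make $L_1$ and $L_2$ algebraically independent over $K$. That yields linear disjointness when one of the extensions is regular, but not in general.

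This is exactly the point the paper's proof addresses. It first replaces all fields by their separable closures, where the operators extend uniquely because $\be$ is local. A separable extension of a separably closed field is regular, so the relevant tensor product is a domain.

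You can also repair step (1) directly within your own setup:
\begin{enumerate}
    \item The ring $R=L_1\otimes_K L_2$ is reduced. So for any minimal prime $P$ of $R$, the localization $R_P$ is a field: its only prime $PR_P$ is the nilradical, which is $0$. This field is generated by the images of $L_1$ and $L_2$.
    \item Since $\be$ is local, the image of $\partial(s)$ in $\be(R_P)$ is invertible for every $s\in R\setminus P$. Hence $\partial$ passes to $R_P$ exactly as in your step (3).
    \item In characteristic $p>0$, $R_P\otimes_{L_1}L_1^{1/p}$ is a localization of $L_1^{1/p}\otimes_K L_2$. The latter is reduced because $L_2/K$ is separable. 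Hence $R_P$ is separable over $L_1$, and symmetrically over $L_2$.
\end{enumerate}
Taking $M:=R_P$, the rest of your argument goes through verbatim. That includes your step (2), which obtains $\partial$ on $R$ from the universal property of the tensor product and so avoids asking whether $\be$ commutes with tensor products. It also includes your verification of $\varphi$ via item (1) of Definition~\ref{def: phi}.

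If you prefer the paper's route through separable closures, still run the $\varphi$-check on the $\be$-subfield generated by $L_1\cup L_2$, as you do. Item (1) gives no direct information about whether $L_i^{\mathrm{sep}}\models\varphi$.
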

\begin{proof}
We work inside some large algebraically closed field $\Omega$. Let $L \subseteq M, N$ be separable extensions of $\be_\varphi$-fields. We will first find an amalgam of $M, N$ over $L$ in the class of $\be$-fields. We can replace $L, M, N$ by their separable closures, so without loss of generality assume that $L, M, N$ are separably closed. Since $L$ is separably closed and the extensions $L\subseteq M, N$ are separable, they are in fact regular. Thus their tensor product $M\otimes_K N$ is a domain 
so we may form its field of fractions $F$. 
There is a (unique) $\be$-field structure on $F$ extending the ones on $M$ and $N$, and since the extensions $M, N\subseteq F$ are separable, $F$ is an amalgam of $M, N$ over $K$ considered in the language $\mathcal{L}_\be^\lambda$. Since $M, N$ are $\bv$-fields, item (1) in Definition \ref{def: phi} implies that $F$ is also a $\bv$-field.
\end{proof}

\begin{remark}\label{remark: fin gen amal}
Assume $\be$ is local. If $L \subseteq M, N$ are finitely generated\footnote{As $\be$-extensions or as extensions of pure fields.} separable $\be$-field extensions, then one can take the amalgam of $M,N$ over $L$ to be finitely generated. Indeed, if $K$ is any amalgam, then the $\be$-subfield of $K$ generated by (the images of) $M$ and $N$ has the desired properties.
\end{remark}

Reasoning as in the proof of Proposition \ref{amal} we may deduce the following result.

\begin{lemma}\label{extension by pth roots}
Assume that $\neat$. Then, for any $\bv$-field $\left( K, \partial \right)$ and any $a\in K^\partial$ there is a $\bv$-field structure on $K\left( a^{1/p} \right)$.    
\end{lemma}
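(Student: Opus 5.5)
Write $L = K(a^{1/p})$ and $b = a^{1/p}$, working in characteristic $p>0$. We may assume $a \notin K^p$, since otherwise $L=K$ and there is nothing to prove. Then $L \cong K[X]/(X^p - a)$ and $L = K\otimes_{K^p(a)}\ldots$ is purely inseparable of degree $p$. The approach follows the proof of Proposition \ref{amal}: we write down the obvious candidate operator and check that it is well defined, after which iterativity is automatic by Definition \ref{def: phi}.

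\textbf{Step 1: the candidate operator.} Since $a \in K^\partial$, we have $\partial(a) = \iota_K(a)$. We want $\partial$ on $L$ to satisfy $\partial(b) = \iota_L(b)$, so that $b$ is again a constant. By the universal property of $K[X]$, $\partial$ on $K$ extends to a $\ka$-algebra map $\tilde\partial\colon K[X] \to \be(L)$ with $X \mapsto \iota_L(b)$. For this map to descend to $L$ we need $\tilde\partial(X^p - a) = 0$. This reads
\[
\iota_L(b)^p - \partial(a) = \iota_L(b^p) - \iota_L(a) = 0,
\]
using that $\iota_L$ is a ring map, $\partial(a)=\iota_K(a)$, and $b^p=a$. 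So $\tilde\partial$ factors through a $\ka$-algebra map $\partial_L\colon L \to \be(L)$. The identity $\pi_L\circ\partial_L = \id$ holds on $K$ and on $b$, because $\pi\circ\iota=\id$, and hence on all of $L$. Thus $\partial_L$ is a $\be$-operator extending $\partial$.

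Notably, this construction does not use the hypothesis $\fr(\ker\pi_\be)=0$. The hypothesis should matter for a different reason: making $b$ a constant is a \emph{forced} choice. By the usual argument, any extension must send $b$ to an element $\iota(b)+\varepsilon$ with $\varepsilon \in \ker\pi_L$, and $(\iota(b)+\varepsilon)^p = \iota(a)+\varepsilon^p$. Under the hypothesis this gives $\varepsilon^p = 0$, so every value is compatible. This confirms that the constant extension exists, though we only need existence, not uniqueness.

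\textbf{Step 2: the iterativity condition $\varphi$.} The field $L$ is generated as a field by the set $K\cup\{b\}$. By item (1) of Definition \ref{def: phi}, it suffices to check $\theta(c)$ for every $c\in K$ and for $c=b$. For $c \in K$, $\theta(c)$ holds because $K \models \varphi$ and $\theta$ is quantifier-free, so it is preserved under the $\be$-embedding $K\subseteq L$. For $c=b$, note that $\theta(b)$ involves only $\partial$ applied to terms built from $b$ in the $\be$-structure. Since $\partial_L$ maps $\ka(b)$ into $\iota(\ka(b))$, the $\be$-subfield $\ka(b)$, or better $\mathbb{F}_p(b)$, carries the zero structure $\iota$. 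By item (2) of Definition \ref{def: phi}, $(\ka(b),\iota)\models\varphi$, and quantifier-free truth transfers up to $L$. Hence $\theta(b)$ holds.

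The main obstacle is Step 2. We must make sure that the sub-$\be$-field generated by $b$ is genuinely closed with the zero operator. This requires that $\iota$ commutes appropriately with iterating $\partial$, i.e. $\be(\partial)\circ\iota = \iota\circ\partial$ by naturality, and that $\ka\subseteq K^\partial$, which holds since $\partial$ is a $\ka$-algebra map. If the base field $\ka$ is awkward here, the fallback is to use the prime field $\mathbb{F}_p(b)$, which is closed under $\iota$.
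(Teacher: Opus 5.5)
Your proof is correct and is essentially the paper's intended argument (``reasoning as in Proposition \ref{amal}''): $K[X]/(X^p-a)\cong K\otimes_{\ka(a)}\ka(a^{1/p})$ carries the operator that extends $\partial$ on $K$ and the zero structure $\iota$ on $\ka(a^{1/p})$, and $\varphi$ then follows from items (1) and (2) of Definition \ref{def: phi}. You are also right that $\neat$ is not used in this construction; it is what gives $K^p\subseteq K^\partial$ in the ensuing strictness remark.

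Two side remarks should be discarded, though neither affects the proof. First, under $\neat$ the value of $\partial(b)$ is free, not ``forced''. Second, the fallback to $\mathbb{F}_p(b)$ is not available in general, since $\be$ is only defined on $\ka$-algebras and $\iota_L(b)$ has coefficients in $\ka$; this does not matter because $\ka(b)$ already works.
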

In particular, existentially closed $\bv$-fields are \textbf{strict}, i.e. $K^\partial = K^p$.

\section{Elementarity of some variants of existential closedness}\label{sec: general}

Since the pioneering work of Robinson existentially closed models and model companions are indispensable objects in the model theory of algebraic structures. What we want to investigate in this section is the class of $\bv$-fields which are existentially closed, possibly in some restricted class of extensions (e. g. only in regular extensions). This fit into a well-established line of research (see Remark \ref{remark: what we did}). 


In this section we will prove a very general statement about elementarity of some variants of existential closedness (Theorem \ref{mainThm}). This entails and generalizes many results from the literature, as we will see in Section \ref{sec: apl}.

We fix a coordinate $\ka$-algebra scheme $\be$ and an iterativity condition $\varphi$ (see Definition \ref{def: phi}). Let $\ce$ be a class of extensions of $\be_\varphi$-fields.\footnote{In order to avoid confusion: by this we mean that the elements of $\ce$ are pairs $\left( K, L \right)$ where $K \subseteq L$ is an extension of $\be_\varphi$-fields.} The main examples to keep in mind are the class of all extensions, the class of separable extensions and the class of regular extensions (see also Example \ref{example: defclass}).

\begin{definition}
Let $K$ be a $\be_\varphi$-field. We say that a $\be_\varphi$-variety $\left( V, s \right)$ over $K$ is \textbf{of type $\ce$} if the $\bv$-extension $K\subseteq K\left( V \right)$ is in $\ce$.
\end{definition}

From now on we assume that $\ce$ is \textbf{definable} in the following sense: given a $\be_\varphi$-variety $\left( V,s \right)$ over a $\be_\varphi$-field $K$, the property ``$\left( V,s \right)$ is of type $\ce$'' is definable (in the sense of Remark \ref{remark: coding}).

\begin{example}\label{example: defclass}
There are three natural examples of definable classes $\ce$:
\begin{enumerate}
    \item $\ce$ is the class of all $\be_\varphi$-extension. This class is definable directly by Remark \ref{remark: coding}, as $\be_\varphi$-varieties of type $\ce$ are just $\be_\varphi$-varieties.
    \item $\ce$ is the class of all regular extensions. Let $\left( V,s \right)$ be a $\be_\varphi$-variety coded by $\bar{a}$ (in the sense of Remark \ref{remark: coding}). Then $\left( V,s \right)$ is of type $\ce$ if and only if  $V$ is absolutely irreducible, i.e. irreducible over $K^{\alg{}}$. As in Remark \ref{remark: coding}, there is a formula $\psi\left( \bar{x} \right)$ (independent of $V$ and $K$) such that $V$ is irreducible over $K^{\alg{}}$ if and only if $K^{\alg{}}\models \psi\left( \bar{a} \right)$.\footnote{Alternatively, one could use that in $\operatorname{ACF}$ the Morley degree is definable.} Since $\operatorname{ACF}$ eliminates quantifiers, we can assume that $\psi\left( \bar{x} \right)$ is quantifier free, thus $K^{\alg{}}\models \psi\left( \bar{a} \right)$ if an only if $K\models \psi\left( \bar{a} \right)$. Hence $\ce$ is definable.
    \item $\ce$ is the class of all separable extensions. The argument is almost the same as in the previous point, but this time we have to use the theory $\operatorname{SCF}$ in an appropriate language.
\end{enumerate}
\end{example}

Inspired by \cite{SDCF}, we introduce the following notion, which is our central object of interest.
\begin{definition}
We say that a $\be_\varphi$-field $\left( K , \partial \right)$ is \textbf{$\ce$-existentially closed in $\ce$} (or simply \textbf{$\ce$-closed}) if for every $\be_\varphi$-field extension $\left( K , \partial \right)\subseteq \left( L , \partial \right)$ in $\ce$ we have that $\left( K , \partial \right)$ is existentially closed in $\left( L , \partial \right)$ in the language $\mathcal{L}_\be$.
\end{definition}
As we will later many interesting notions from differential algebra and beyond can be interpreted as $\ce$-closedness for appropriate $\ce$. The question we are interested in is the following: when is the property ``being $\ce$-closed'' an elementary property? For the sake brevity, if this property is elementary we will say that \textbf{$\ec$ exists} and denote by $\ec$ the first order theory axiomatising the class of $\ce$-closed $\be_\varphi$-fields. Otherwise we will say that \textbf{$\ec$ does not exist}. If $\ce$ is the class of all extensions of $\bv$-fields, then we will drop $\ce$ from the notation and write $\bvcf$ and if moreover $\varphi$ is trivial we will omit $\varphi$ and write simply $\be\operatorname{CF}$.

\begin{example}
Let us see what does being $\ce$-closed mean for $\ce$ as in Example \ref{example: defclass}.
\begin{enumerate}
    \item If $\ce$ is the class of all $\be_\varphi$-extensions, then $\ec$ exists if and only if the theory of $\be_\varphi$-fields has a model companion in the language $\mathcal{L}_\be$.
    \item Assume that $\be=\left( \ka\left[ X \right] / \left( X^2 \right) \right)_\otimes$, $\varphi$ is trivial (i.e. $\be_\varphi$-fields are differential fields) and $\ce$ is the class of all separable extensions of differential fields. Then $\ec$ exists and has a very nice axiomatization via ``Wood axioms'', as was proved by Ino and Le\'{o}n S\'{a}nchez in \cite{SDCF}. In fact precisely that paper inspired the author of this thesis to investigate existential closedness in restricted classes of extensions.
    \item If $\ce$ is the class of all regular extensions, then being $\ce$-closed is closely related to being \textit{pseudo algebraically closed} in the sense of \cite{PAC}, which we will discuss in Subsection \ref{sec: PAC}.
\end{enumerate}
\end{example}

When checking existential closedness, we will often use the following standard reduction. We skip its proof.

\begin{lemma}\label{lemma: red of formulas}
Let $K\subseteq L$ be an extension of $\be$-fields. Then the following are equivalent:
\begin{enumerate}
    \item $K$ is existentially closed in $L$ in the language $\mathcal{L}_\be$.
    \item For every $K$-polynomials $f_0,\dotsc , f_n$, if $L\models \left( \exists \bar{x} \right) \left( \bigwedge_{i\leqslant n} f_i\left( \partial\bar{x}\right) = 0 \right)$ then $K\models \left( \exists \bar{x} \right) \left( \bigwedge_{i\leqslant n} f_i\left( \partial\bar{x}\right) = 0 \right)$.
\end{enumerate}
\end{lemma}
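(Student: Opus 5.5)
The implication (2) $\Rightarrow$ (1) is the only nontrivial direction; (1) $\Rightarrow$ (2) is immediate, since the formula in (2) is an existential $\mathcal{L}_\be$-sentence with parameters from $K$. For the converse, I would start from an arbitrary existential $\mathcal{L}_\be$-formula $\exists \bar{y}\, \theta(\bar{y})$ with parameters in $K$ that holds in $L$. Here $\theta$ is quantifier-free, and I would put it in disjunctive normal form. Since the formula holds in $L$, some disjunct holds, so it suffices to handle a conjunction of atomic and negated atomic formulas.

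Next I would normalise the terms. An $\mathcal{L}_\be$-term is built from variables and constants using ring operations and the operators $\partial_1,\dotsc,\partial_e$, possibly iterated. For each subterm of the form $\partial_i(t)$ I introduce a new existentially quantified variable $z$ together with the conjunct $z = t$. Iterating this, every atomic formula becomes equivalent, under extra existential quantifiers, to a conjunction of equations $g(\bar{x},\partial\bar{x}) = 0$ for polynomials $g$ over $K$. Here $\bar{x}$ is the enlarged tuple of variables and $\partial\bar{x}$ is read as including $\bar{x}$ itself, since $\partial_1 = \pi\circ\partial = \id$. Defining equations such as $z = \partial_i(x)$ are of this form.

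It remains to remove negations. I would use the Rabinowitsch trick: $g \neq 0$ is equivalent to $\exists w\,(g\cdot w - 1 = 0)$. The new variable $w$ occurs only through $\partial_1 w = w$, so this is again an equation of the shape $f(\partial \bar{x}, \partial w) = 0$.

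After these steps, the original formula is equivalent, uniformly in every $\be$-field, to a sentence of the form in (2). Applying (2) then transfers the witness from $L$ to $K$. The main point to check is that the equivalences are uniform, i.e.\ they hold in both $K$ and $L$ with the same polynomials. This is clear, because they only use the field axioms and the definition of the function symbols $\partial_i$.
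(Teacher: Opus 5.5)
Your proposal is correct and is exactly the standard reduction the paper invokes without proof (the paper skips it): pass to one disjunct of the DNF, name each $\partial_i$-subterm by a fresh variable so that iterated operators disappear, and eliminate inequations by the Rabinowitsch trick, using $\partial_1=\id$ so that the variables themselves occur among $\partial\bar{x}$. One point deserves to be stated explicitly: for a negated atomic formula, the defining equations $z=u$ of the auxiliary variables stay as positive conjuncts. This is legitimate because they determine the $z$'s uniquely, so only the final inequation $g\neq 0$ needs the Rabinowitsch trick.
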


We introduce the following finiteness condition, which will allow us to give a simpler criterion for $\ce$-closedness (see Lemma \ref{reduction to finitely generated}), which in turn will assure that $\ec$ exists. 

\begin{assumption}\label{ass}
Whenever $\left( K, \partial \right)\subseteq \left( L, \partial \right)$ is in $\ce$ and $a\in L$ is a finite tuple, then there are finite tuple $b$ containing $\partial a$ and a $\be$-operator $\partial'\colon K\left(  b \right) \to \be \left( K\left(  b \right)\right)$ such that $\partial'$ extends the $\be$-operator $\partial |_{K\left( a \right) } \colon K\left( a \right) \to \be \left( K\left( \partial a \right)\right)$ and $\left( K, \partial \right)\subseteq \left( K\left( b \right), \partial' \right)$ is in $\ce$.
\end{assumption}

\begin{remark}\label{remark: many examples}
Let $\ce$ be any of the classes in Example \ref{example: defclass} and let $\left( \be, \varphi \right)$ be a nice pair. Immediately by Proposition \ref{prop: pure assumption} we get that Assumption \ref{ass} holds in this case. This gives a plethora of examples of $\ce, \be, \varphi$ satisfying Assumption \ref{ass}.
\end{remark}

\begin{notation}
We denote by $\ce_{\operatorname{fin}}$ the subclass of $\ce$ consisting of those extensions $K\subseteq L$ in $\ce$ which are finitely generated as pure fields. Note that $\ce_{\operatorname{fin}}$ always satisfies Assumption \ref{ass}, as one can take $b$ to be a tuple of generators of the extension $K\subseteq L$. It is also clearly definable, provided $\ce$ is.
\end{notation}

The meaning of Assumption \ref{ass} is contained in the following result.

\begin{lemma}\label{reduction to finitely generated}
Assume $\ce$ satisfies Assumption \ref{ass}. Then a $\bv$-field $\left( K , \partial \right)$ is $\ce$-closed if and only if it is $\ce_{\operatorname{fin}}$-closed.
\end{lemma}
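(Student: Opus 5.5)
One direction is immediate. Since $\ce_{\operatorname{fin}}\subseteq\ce$, every $\ce$-closed $\bv$-field is $\ce_{\operatorname{fin}}$-closed. For the converse I assume $\left( K,\partial \right)$ is $\ce_{\operatorname{fin}}$-closed and fix an extension $\left( K,\partial \right)\subseteq \left( L,\partial \right)$ in $\ce$. The plan is to verify condition (2) of Lemma \ref{lemma: red of formulas}.

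So suppose there is a finite tuple $a\in L$ with $\bigwedge_{i\leqslant n} f_i\left( \partial a \right)=0$, where the $f_i$ are polynomials over $K$. I apply Assumption \ref{ass} to the pair $\left( K,L \right)$ and the tuple $a$. This yields a finite tuple $b$ containing $\partial a$ and a $\be$-operator $\partial'$ on $K\left( b \right)$ with three properties:
\begin{enumerate}
    \item $\partial'$ extends $\partial|_{K\left( a \right)}\colon K\left( a\right)\to \be\left( K\left( \partial a\right)\right)$;
    \item $\left( K\left( b \right),\partial' \right)$ is a $\bv$-field, which is implicit in membership in $\ce$, since $\ce$ consists of extensions of $\bv$-fields;
    \item $\left( K,\partial \right)\subseteq \left( K\left( b \right),\partial' \right)$ lies in $\ce$.
\end{enumerate}
This extension is finitely generated as a pure field, because $b$ is finite. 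Hence it lies in $\ce_{\operatorname{fin}}$.

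By (1), $a$ is a tuple in $K\left( b \right)$ with $\partial' a=\partial a$, and $\partial a$ consists of entries of $b$. The quantifier-free formula $\bigwedge_i f_i\left( \partial x \right)=0$ only involves $\partial$ applied once to the variables. Its truth at $a$ therefore depends only on the value $\partial a$, so $K\left( b \right)\models \bigwedge_i f_i\left( \partial' a \right)=0$. Since $K$ is existentially closed in $\left( K\left( b \right),\partial' \right)$ by $\ce_{\operatorname{fin}}$-closedness, $K$ satisfies $\left( \exists \bar x \right)\bigwedge_i f_i\left( \partial \bar x \right)=0$. Lemma \ref{lemma: red of formulas} then gives that $K$ is existentially closed in $L$.

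The main obstacle is that the reduction in Lemma \ref{lemma: red of formulas} must genuinely involve only one application of $\partial$. If general quantifier-free $\mathcal{L}_\be$-formulas with iterated $\partial$ were needed, I would first introduce new existentially quantified variables $y=\partial_j x$ to flatten the iterations. That brings the formula to the form in (2), where each equation is polynomial in $\partial$ of the variables. This is exactly why Assumption \ref{ass} asks only that $\partial'$ agree with $\partial$ on $a$, with values in $K\left( \partial a \right)$, rather than on all of $K\left( b \right)$.
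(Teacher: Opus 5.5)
Your proof is correct and follows essentially the same route as the paper. You reduce via Lemma \ref{lemma: red of formulas} to formulas of the form $\theta(\partial\bar x)$, replace $L$ by the finitely generated extension $\left(K(b),\partial'\right)$ supplied by Assumption \ref{ass}, and use $\partial' a=\partial a$ to carry the witness over; the trivial direction $\ce_{\operatorname{fin}}\subseteq\ce$, which the paper leaves implicit, is also correctly handled.
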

\begin{proof}
Assume that $\left( K , \partial \right)$ is $\ce_{\operatorname{fin}}$-closed. Let $\phi\left( x\right)$ be a quantifier-free $\mathcal{L}_\be\left( K \right)$-formula such that there is some $\bv$-field extension $\left( K, \partial\right) \subseteq \left( L, \partial_L\right)$ in $\ce$ and some tuple $a\in L$ such that $L\models \phi \left( a \right)$. Using Lemma \ref{lemma: red of formulas} we may assume that $\phi \left( \bar{x} \right)$ is of the form $\theta\left( \partial\bar{x} \right)$ where $\theta\left( \bar{y} \right)$ is a quantifier-free $\mathcal{L}_{\operatorname{rng}}\left( K \right)$-formula. Take $b\in L$ and $\partial'$ as in the conclusion of Assumption \ref{ass}. Then still $K\left( b \right) \models \theta \left( \partial' a \right)$ as $\partial_L \left( a \right) = \partial'\left( a \right)$. Since $\left( K , \partial \right)$ is $\ce_{\operatorname{fin}}$-closed  and $K\subseteq K\left( b \right)$ is a finitely generated extension in $\ce$, we have that there is some tuple $c\in K$ such that $K\models \theta\left( \partial c \right)$, which finishes the proof.
\end{proof}

For technical reasons we also introduce the following assumption, saying that ``$\left( \be, \varphi \right)$ satisfies Lemma \ref{lemma: Blocus exist}''.

\begin{assumption}\label{ass2}
Let $\left( K, \partial \right)\subseteq \left( L, \partial \right)$ be $\be_\varphi$-fields and let $a\in L$ be a finite tuple such that $L=K\left( a \right)$. Then $a$ can be extended to a finite tuple $b$ so that $\partial$ restricts to a $\be$-operator on $K\left[ b \right]$.    
\end{assumption}

\begin{remark}
Let us comment on the nature of the introduced assumptions. Both of the them are finiteness conditions. Assumption \ref{ass} allows us (at least partially) to reduce the study of $\be_\varphi$-fields to the case of $\be_\varphi$-fields which are finitely generated as field over some base $\bv$-field $K$. Assumption \ref{ass2} on the other hand reduces the latter to the study of $\bv$-ring over $K$ which are finitely generated as $K$-algebras, i.e. to the study of $\bv$-varieties (see Proposition \ref{ecfin <=> points}).
\end{remark}

\begin{prop}\label{ecfin <=> points}
Suppose Assumption \ref{ass2} holds. Then, for a $\bv$-field $\left( K , \partial \right)$ the following conditions are equivalent.
\begin{enumerate}
    \item $\left( K , \partial \right)$ is $\ce_{\operatorname{fin}}$-closed.
    \item Every $\bv$-variety over $K$ of type $\ce$ has a Zariski-dense set of $K$-rational $\be$-point.
    \item Every $\bv$-variety over $K$ of type $\ce$ has a $K$-rational $\be$-point.
\end{enumerate}
\end{prop}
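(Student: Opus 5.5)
We prove the cycle $(1)\Rightarrow(2)\Rightarrow(3)\Rightarrow(1)$. The implication $(2)\Rightarrow(3)$ is trivial, since a Zariski-dense subset of a nonempty variety is nonempty.

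For $(1)\Rightarrow(2)$, let $\left( V, s \right)$ be a $\bv$-variety over $K$ of type $\ce$, and let $U\subseteq V$ be a nonempty Zariski-open subset. It suffices to find a $K$-rational $\be$-point in $U$, and we may shrink $U$ to a basic open set $\{g\neq 0\}$ with $g\in K\left[ V \right]$ nonzero. Take a generic point $a$ of $V$ over $K$ and identify $K\left( V \right)=K\left( a \right)$. By Lemma \ref{generic point is sharp}, $a\in \left( V, s \right)^\sharp\left( K\left( V \right) \right)$, and $g(a)\neq 0$. The extension $K\subseteq K\left( V \right)$ is in $\ce$ because $V$ is of type $\ce$. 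It is also finitely generated as a field, so it lies in $\ce_{\operatorname{fin}}$. The conditions ``$x\in V$'', ``$\partial(x)=s(x)$'' and ``$g(x)\neq 0$'' are quantifier-free $\mathcal{L}_\be(K)$-conditions satisfied by $a$ in $K\left( V \right)$. Since $K$ is $\ce_{\operatorname{fin}}$-closed, these conditions have a solution in $K$, which is a $K$-rational $\be$-point lying in $U$.

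For $(3)\Rightarrow(1)$, let $K\subseteq L$ be in $\ce_{\operatorname{fin}}$, and suppose $L\models \theta\left( \partial a \right)$ for some tuple $a\in L$ and some quantifier-free ring formula $\theta$ over $K$; by Lemma \ref{lemma: red of formulas} every existential formula can be reduced to this form. We proceed in four steps.
\begin{enumerate}
    \item Write $L=K\left( c \right)$ for a finite tuple $c$. Enlarging $c$ by $a$ and $\partial a$ keeps it finite and keeps $L=K\left( c \right)$.
    \item Apply Assumption \ref{ass2} to extend $c$ to a finite tuple $b$ such that $K\left[ b \right]$ is closed under $\partial$. Then $K\left[ b \right]$ is a finitely generated $\be$-domain over $K$.
    \item By Remark \ref{remark: B-varieties}, $K\left[ b \right]$ is the coordinate ring of a $\be$-variety $\left( V, s \right)$ with $V=\locus(b/K)$. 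Its function field is $L$, which is a $\bv$-field, so $\left( V, s \right)$ is a $\bv$-variety. It is of type $\ce$ because $K\subseteq L$ is in $\ce$. Moreover, $b$ is a $\be$-point of $\left( V, s \right)$ in $L$, since $\partial b=s(b)$ holds by construction.
    \item Since $a$ and $\partial a$ are among the coordinates of $b$, the formula $\theta\left( \partial a \right)$ becomes a quantifier-free condition on $b$ alone.
\end{enumerate}

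The remaining difficulty is that $\theta$ may contain inequations, while (3) only supplies a single point. To handle this, write $\theta$ in disjunctive normal form and pick a disjunct true of $b$. Its equations in $b$ hold on all of $V$, because $V$ is the locus of $b$. Its inequations amount to $h(b)\neq 0$ for a single product $h$. We therefore replace $V$ by the open set $D(h)$, realized as the $\be$-variety with coordinate ring $K\left[ b, 1/h(b) \right]$. We must check that this ring is still $\partial$-closed. I expect this to be the main obstacle. When $\be$ is local it follows from étaleness of localization, as in the proof of Lemma \ref{lemma: Blocus exist}. In general I would instead apply Assumption \ref{ass2} to the enlarged tuple $(b,1/h(b))$, which yields a $\bv$-variety with the same function field $L$, hence still of type $\ce$. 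Applying (3) to this variety gives a $K$-rational $\be$-point $b'$. Then $h(b')$ is defined and nonzero, the equations hold at $b'$, and $\partial b'=s(b')$, so $\partial$ applied to the $a$-coordinates of $b'$ equals the corresponding coordinates of $b'$. Hence $K\models \theta\left( \partial a' \right)$, where $a'$ is the subtuple of $b'$ corresponding to $a$.
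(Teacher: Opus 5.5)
Your proof is correct and follows the paper's argument: $(1)\Rightarrow(2)$ goes through the generic point and Lemma \ref{generic point is sharp}, and $(3)\Rightarrow(1)$ enlarges the tuple, applies Assumption \ref{ass2}, takes $V=\locus_K$ of the tuple with its section $s$, and transfers the positive formula $\phi(s(x))$ from the generic point to the $K$-rational $\be$-point. The only divergence is your DNF-and-localization treatment of inequations (adjoining $1/h(b)$ and reapplying Assumption \ref{ass2}); it is sound but unnecessary, because Lemma \ref{lemma: red of formulas} already reduces to conjunctions of equations $f_i(\partial\bar{x})=0$, with inequations absorbed by the usual Rabinowitsch trick, so what you call the main obstacle never arises.
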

\begin{proof}
$\left( 1 \right) \Longrightarrow \left( 2 \right):$ Assume that $\left( K , \partial \right)$ is $\ce_{\operatorname{fin}}$-closed, let $\left( V, s \right)$ be a $\bv$-variety over $K$ of type $\ce$ and let $f$ be a non-zero regular function on $V$. We aim to prove that $\left( V, s \right)$ has a $K$-rational $\be$-point $c$ such that $f\left( c \right) \neq 0$. Let $a\in V$ be a generic point of $V$ over $K$ and $\partial$ be the $\be$-operator on $K\left( a \right) = K\left( V \right)$ corresponding to $s$. Let $\phi\left( x \right)$ be a quantifier-free $\mathcal{L}_\be\left( K \right)$-formula expressing the property
``$x\in \left( V, s \right)^\sharp$ and $f\left( x \right)\neq 0$''.

By Lemma \ref{generic point is sharp} we have that $a\in \left( V, s \right)^\sharp \left( K\left( a \right) \right)$ and $f\left( a \right)$, so that $K\left( V \right) \models \left( \exists x \right)\phi( x )$. Since the extension $K\subseteq K\left( V \right)$ is in $\ce$ we have that $K \models \left( \exists x \right)\phi( x )$, i.e. there exists a $K$-rational $\be$-point of $\left( V, s \right)$ outside from the zero set of $f$. Thus $\left( V, s \right)^\sharp\left( K \right)$ is Zariski-dense in $V$.

$\left( 2 \right) \Longrightarrow \left( 3 \right):$ Obvious.

$\left( 3 \right) \Longrightarrow \left( 1 \right):$ Assume that every $\bv$-variety over $K$ of type $\ce$ has a $K$-rational $\be$-point. Let $\theta\left( x \right)$ be a quantifier-free $\mathcal{L}_{\be}\left( K \right)$-formula such that there is some finitely generated $\bv$-field extension $\left( K , \partial \right)\subseteq \left( L , \partial \right)$ in $\ce$ and some tuple $a\in L$ such that $L\models \theta \left( a \right)$. Our goal is to prove that we can find such a tuple already in $K$. By Lemma \ref{lemma: red of formulas} we can assume that $\theta\left( x \right) = \phi\left( \partial x \right)$ where $\varphi$ is of the form $\bigwedge_{i=0}^n f_i\left( y \right) = 0$ for some $K$-polynomials $f_0,\dotsc, f_n$. Since $L$ is finitely generated we can assume that $L=K\left( a \right)$ (by possibly enlarging the tuple $a$ and adding some dummy variables to $\phi$) and using Assumption \ref{ass2} we can furthermore assume that $\partial$ restricts to a $\be$-operator on $K\left[ a \right]$. Set $V=\locus_K\left( a \right)$ and let $s\colon V \to \tau^\partial V$ be the section of $\pi^\partial_V$ corresponding to the $\be$-operator $\partial\colon K\left[ a \right] \to \be\left( K\left[ a \right]\right)$. In particular $\partial\left( a \right) = s\left( a \right)$. By assumption, we have that $\left( V, s \right)$ has a $K$-rational $\be$-point, say $b\in K$. Thus $\partial\left( b \right) = s\left( b \right)$. Since $a$ satisfies the $\mathcal{L}_\be\left( K \right)$-formula $\phi\left( \partial\left( x \right) \right)$ and $\partial\left( a \right) = s\left( a \right)$, we have that $a$ satisfies the $\mathcal{L}_{\operatorname{ring}}\left( K \right)$-formula $\phi\left( s\left( x \right) \right)$. Since $\phi\left( s\left( x \right) \right)$ is a positive $\mathcal{L}_{\operatorname{ring}}\left( K \right)$-formula and $b\in V=\locus_K\left( a \right)$ we have that $b$ satisfies $\phi\left( s\left( x \right) \right)$ and thus $\phi\left( \partial\left( x \right) \right)$, as desired.
\end{proof}

\textbf{Axioms for $\ec$}

\begin{quote}
$\left( K, \partial \right)$ is a $\be_\varphi$-field such that every $\ce$-variety over $\left( K , \partial \right)$ has a $K$-rational $\be$-point.
\end{quote}

Combining Lemma \ref{reduction to finitely generated}, Proposition \ref{ecfin <=> points} and the ``definability property'' of $\ce$ we get the following result.

\begin{theorem}\label{mainThm}
Suppose that Assumption \ref{ass} and Assumption \ref{ass2} hold. Then, a $\be_\varphi$-field $\left( K, \partial \right)$ is $\ce$-closed if and only if it satisfies the Axioms for $\ec$ written above. In particular, $\ec$ exists.
\end{theorem}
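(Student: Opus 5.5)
The plan is to chain the two reductions already proved and then check that the resulting criterion is first order. First I use Assumption \ref{ass} and Lemma \ref{reduction to finitely generated}: a $\bv$-field $\left( K, \partial \right)$ is $\ce$-closed if and only if it is $\ce_{\operatorname{fin}}$-closed. Next I use Assumption \ref{ass2} and the equivalence $(1)\Leftrightarrow(3)$ of Proposition \ref{ecfin <=> points}: being $\ce_{\operatorname{fin}}$-closed is the same as every $\bv$-variety over $K$ of type $\ce$ having a $K$-rational $\be$-point. One point needs checking here. Proposition \ref{ecfin <=> points} is stated for $\ce$, but its proof only uses extensions $K \subseteq K\left( V \right)$, which are finitely generated, so types $\ce$ and $\ce_{\operatorname{fin}}$ agree for varieties. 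These two steps together give the first claim of the theorem: $\ce$-closedness is equivalent to the Axioms for $\ec$.

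For the ``in particular'' part, I would show the axioms form a set of first-order $\mathcal{L}_\be$-sentences. Being a $\bv$-field is axiomatized by the $\be$-operator axioms, which are universal identities (Remark \ref{remark: beta-op = twisted B-op.}), together with the universal sentence $\varphi$. For the variety condition, fix $n,k,d$. By Remark \ref{remark: coding}, pairs $\left( V,s \right)$ in $\Omega^n$ whose ideal is generated by at most $k$ polynomials of degree at most $d$, and whose $s$ is given by polynomials of degree at most $d$, are coded by tuples $\bar{c}$ of fixed length. Remark \ref{remark: coding} gives a formula $\psi_{n,k,d}(\bar{x})$ saying that $\bar{c}$ codes a $\bv$-variety, and definability of $\ce$ gives a formula $\kappa_{n,k,d}(\bar{x})$ saying that it is of type $\ce$. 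The condition ``$a\in (V,s)^\sharp(K)$'' is quantifier-free in $a$ and $\bar{c}$: the generators of $I$ vanish at $a$, and $\partial(a)=s(a)$. So for each $n,k,d$ the axiom is
$$(\forall \bar{x})\Big(\psi_{n,k,d}(\bar{x})\wedge\kappa_{n,k,d}(\bar{x}) \rightarrow (\exists y)\, y\in (V_{\bar{x}},s_{\bar{x}})^\sharp\Big).$$
Every $\bv$-variety is captured by some $n,k,d$, so this scheme axiomatizes exactly the $\ce$-closed $\bv$-fields. Hence $\ec$ exists.

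I expect the main obstacle to be this coding step. The code $\bar{c}$ must determine $V$ correctly: the listed polynomials have to generate the radical, prime ideal $I_K(V)$, and not merely cut out $V$. Otherwise $\tau^\partial V$ and the condition ``$s$ lands in $\tau^\partial V$'' would be computed from the wrong ideal. This is handled by the van den Dries degree bounds inside $\psi$, which is exactly how Remark \ref{remark: coding} is set up, so I would rely on that remark rather than reprove it.
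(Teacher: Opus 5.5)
Your proposal is correct and follows the paper's own argument. You chain Lemma \ref{reduction to finitely generated} (via Assumption \ref{ass}) with the equivalence $(1)\Leftrightarrow(3)$ of Proposition \ref{ecfin <=> points} (via Assumption \ref{ass2}), then combine Remark \ref{remark: coding} with the definability of $\ce$ to write the Axioms as a first-order scheme indexed by $n,k,d$. Your two extra observations are details the paper leaves implicit: types $\ce$ and $\ce_{\operatorname{fin}}$ coincide for $\bv$-varieties, and the code must determine $I_K(V)$ itself.
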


In particular, any nice pair $\left( \be, \varphi \right)$ meets the assumptions of Theorem \ref{mainThm} by Lemma \ref{lemma: Blocus exist} and Remark \ref{remark: many examples}, hence we get the following.

\begin{theorem}\label{mainThm2}
Assume $\left( \be, \varphi \right)$ is nice and $\ce$ is any of the classes in Example \ref{example: defclass}. Then, for a $\be_\varphi$-field $\left( K, \partial \right)$ the following properties are equivalent:
\begin{enumerate}
    \item $\left( K, \partial \right)$ is $\ce$-closed.
    \item Every $\be_\varphi$-variety over $\left( K , \partial \right)$ has a $K$-rational $\be$-point.
\end{enumerate}
Moreover, the latter property is expressible by a scheme of first-order sentences in the language $\mathcal{L}_\be$. In particular, $\ec$ exists. 
\end{theorem}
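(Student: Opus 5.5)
The plan is to deduce Theorem \ref{mainThm2} directly from Theorem \ref{mainThm}, by checking its two hypotheses and the definability of $\ce$.

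\textbf{Checking the hypotheses.} First, Assumption \ref{ass2} is literally the conclusion of Lemma \ref{lemma: Blocus exist}. That lemma applies because $\left( \be, \varphi \right)$ is nice. Second, Assumption \ref{ass} holds for each of the three classes in Example \ref{example: defclass} by Remark \ref{remark: many examples}, which is in turn an application of Proposition \ref{prop: pure assumption}. Concretely, given $a\in L$ with $(K,L)\in\ce$, I would take $b$ to be the tuple $\partial a$ and use the $\bv$-operator on $K_1=K\left( \partial_L a\right)$ provided by Proposition \ref{prop: pure assumption}. I then need to check that $K\subseteq K_1$ stays in $\ce$:
\begin{itemize}
\item for the class of all extensions this is trivial;
\item for separable and regular extensions it holds because $K_1\subseteq L$, and a subextension of a separable (resp.\ regular) extension is again separable (resp.\ regular).
\end{itemize}
Finally, definability of $\ce$ is exactly the content of Example \ref{example: defclass}.

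\textbf{Applying Theorem \ref{mainThm}.} With these in hand, Theorem \ref{mainThm} says that $\left( K,\partial\right)$ is $\ce$-closed if and only if every $\bv$-variety of type $\ce$ over $K$ has a $K$-rational $\be$-point. Tracing through, this is Lemma \ref{reduction to finitely generated} followed by (1)$\Leftrightarrow$(3) of Proposition \ref{ecfin <=> points}. This gives the equivalence in the statement, where item (2) is read as ranging over varieties of type $\ce$; for the class of all extensions these are all $\bv$-varieties.

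\textbf{First-order expressibility.} For each triple $n,k,d$, Remark \ref{remark: coding} provides a formula $\psi_{n,k,d}(\bar x)$ saying that $\bar x$ codes a $\bv$-variety. Definability of $\ce$ provides a formula $\chi_{n,k,d}(\bar x)$ saying it is moreover of type $\ce$. The axiom scheme then consists of the sentences
$$\forall \bar x\,\bigl(\psi_{n,k,d}(\bar x)\wedge\chi_{n,k,d}(\bar x)\to \exists y\,(y\in V_{\bar x}\wedge \partial(y)=s_{\bar x}(y))\bigr),$$
where membership in $V_{\bar x}$ and the equation $\partial(y)=s_{\bar x}(y)$ are quantifier-free in $\mathcal L_\be$ once the generators and coefficients are coded by $\bar x$. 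Every $\bv$-variety is coded for some $n,k,d$, so the scheme axiomatizes the class, and hence $\ec$ exists.

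\textbf{Main obstacle.} The only non-bookkeeping point is verifying that the field $K_1$ from Proposition \ref{prop: pure assumption} keeps $K\subseteq K_1$ in $\ce$. This is where using the subfield $K\left(\partial_L a\right)$ of $L$, rather than an abstract extension, matters, since that is what lets separability or regularity be inherited from $L$.
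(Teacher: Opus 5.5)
Your proposal is correct and is essentially the paper's own argument: the paper obtains Theorem \ref{mainThm2} directly from Theorem \ref{mainThm}, checking Assumption \ref{ass2} via Lemma \ref{lemma: Blocus exist} and Assumption \ref{ass} via Remark \ref{remark: many examples}. Your added details are exactly the points the paper leaves implicit: first, that $K\subseteq K(\partial_L a)$ inherits separability or regularity from $K\subseteq L$, since membership in these classes depends only on the underlying field extension; second, that item (2) must be read as ranging over $\bv$-varieties of type $\ce$.
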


The rest of this Chapter is devoted to applying Theorem \ref{mainThm2} for various $\be, \varphi, \ce$.

\begin{remark}
We could well ignore any pair $\left( \be, \varphi \right)$ which is not nice, in particular we could refrain from introducing Assumption \ref{ass}. We think however that it is appropriate to introduce them in order to show that our proof work under some abstract structural properties and is not dependent on the particular class of operators involved. Nice pairs are simply a wide class of natural examples to which we can apply our results.
\end{remark}

\section{Applications}\label{sec: apl}
\subsection{Model companions of theories of \texorpdfstring{$\be$}{B}-fields}\label{sec: bcf}
We will use Theorem \ref{mainThm} to prove the existence of a model companion of various theories of $\be_\varphi$-fields, generalizing and unifying many results from the literature. After that we focus on the case when $\be$ satisfies $\neat$. In the latter part of of this subsection we give a different axiomatization of the resulting model companion, in the spirit of the Pierce-Pillay axioms for $\operatorname{DCF}_0$. 

Immediately from Theorem \ref{mainThm} and Proposition \ref{prop: pure assumption} we get the following results.

\begin{theorem}\label{thm: mc}
Assume that $\left( \be, \varphi\right)$ satisfies Assumption \ref{ass} where $\ce$ is the class of all $\bv$-extensions. Then the theory of $\be_\varphi$-fields has a model companion. In particular, this holds for any nice pair $\left( \be, \varphi \right)$.
\end{theorem}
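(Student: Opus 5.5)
The plan is to apply Theorem \ref{mainThm} with $\ce$ the class of all $\bv$-extensions. Everything then comes down to checking its hypotheses and then translating ``$\ec$ exists'' into the existence of a model companion.

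First, this $\ce$ is definable by item (1) of Example \ref{example: defclass}: a $\bv$-variety is of type $\ce$ exactly when it is a $\bv$-variety, and that is definable by Remark \ref{remark: coding}. Assumption \ref{ass} is assumed in the first sentence of the statement. Assumption \ref{ass2} is not listed there. For nice pairs it holds by Lemma \ref{lemma: Blocus exist}. For the general first sentence I would either read Assumption \ref{ass2} as implicitly included, as the paper does right after Theorem \ref{mainThm}, or add it as a hypothesis. The ``in particular'' clause needs Assumption \ref{ass} for nice pairs, and that is Remark \ref{remark: many examples}, i.e.\ Proposition \ref{prop: pure assumption} with $\ce$ the class of all extensions.

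Under these hypotheses Theorem \ref{mainThm} produces a first-order $\mathcal{L}_\be$-theory $\bvcf$, namely the axioms of $\bv$-fields plus the scheme ``every $\bv$-variety has a $K$-rational $\be$-point''. Its models are exactly the $\bv$-fields that are existentially closed in every $\bv$-field extension.

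It remains to show that $\bvcf$ is the model companion of the theory $T_\varphi$ of $\bv$-fields. $T_\varphi$ is universal: the field axioms together with the $\be$-operator identities of Remark \ref{remark: beta-op = twisted B-op.} (using a symbol for inverse if needed) and $\varphi$ are all universal. So every $\bv$-field embeds into an existentially closed one by the standard chain construction: iterate the union of chains over all existential formulas, using that a union of a chain of models of a universal ($\forall\exists$ suffices) theory is a model. Existentially closed models of $T_\varphi$ are precisely the $\ce$-closed $\bv$-fields for $\ce$ the class of all extensions. Hence the class of e.c.\ models is elementary, axiomatized by $\bvcf$. By the classical criterion, a universal (inductive) theory whose existentially closed models form an elementary class has that class's theory as its model companion.

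The only delicate point is the bookkeeping about Assumption \ref{ass2}; everything else is a direct combination of results already established.
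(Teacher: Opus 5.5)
Your proposal is correct and follows the paper's own argument: the paper obtains the theorem in one line from Theorem \ref{mainThm}, with $\ce$ the class of all $\bv$-extensions, and uses Proposition \ref{prop: pure assumption} for nice pairs; it leaves implicit the step from ``$\ec$ exists'' to ``a model companion exists'' (item (1) of the example after the definition of $\ce$-closedness), which you spell out via the standard criterion for inductive theories. You are also right about Assumption \ref{ass2}: Theorem \ref{mainThm} requires it and the statement omits it, so the paper's one-line derivation tacitly assumes it too, while for nice pairs it is supplied by Lemma \ref{lemma: Blocus exist}.
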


\begin{remark}\label{remark: what we did}
The above theorem unifies and generalizes in one swift motion many existing results about the existence of model companions for theories of fields with operators, mostly in positive characteristic. Below is a list of some of those theories (see also the discussion Section \ref{sec: iterative} and the chart provided in the Introduction). The point is that any of the example below can be described using a nice pair $\left( \be, \varphi \right)$.\footnote{The examples below are precisely the reason why we crafted the definition of a nice pair the way we did.}
\begin{enumerate}
    \item free ``local'' $\mathcal{D}$-ring structures in characteristic zero (see \cite{MS2}),
    \item $B$-operators for $B$ satisfying $\fr\left( \ker \pi_B \right) = 0$ (see \cite{BHKK}; these are precisely the local $B$ for which a model companion exists),
    \item fields with an action of a (fixed) finite group (see \cite{HK1}) or a finite group scheme (see \cite{HK2})
    \item ordinary differential fields with finite group actions (this was done for characteristic zero in \cite{GDCF}, and for positive characteristic in \cite[Theorem 4.36]{PAC}).
\end{enumerate}
What Theorem \ref{mainThm2} does not entail however are partial differential fields in characteristic zero. In fact, the model companion in this case can not be axiomatized using $\bv$-varieties (see Subsection \ref{subsec: sad}).
\end{remark}

Let us now assume that $\neat$ and that $\varphi$ is trivial. We will now give different axioms for $\bcf$, in the spirit of the Pierce-Pillay axioms for $\operatorname{DCF}_0$ given in \cite{PiercePillay}.

\textbf{New Axioms for $\bcf$}

\begin{quote}
For every $K$-varieties $V$ and $W$, if $W\subseteq \tau^\partial V$ and the projection $W\to V$ is separable, then there is some $a\in V\left( K \right)$ such that $\partial_V\left( a \right) \in W\left( K \right)$.
\end{quote}

\begin{theorem}\label{thm: new axioms}
Assume $\be$ satisfies $\neat$. Then a $\be$-field $\left( K, \partial\right)$ is existentially closed if and only if it satisfies the New Axioms.    
\end{theorem}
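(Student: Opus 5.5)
We prove both directions, using the case of Theorem \ref{mainThm2} where $\ce$ is the class of all extensions and $\varphi$ is trivial. Then $(\be,\varphi)$ is nice by item (1) of Definition \ref{def: nice}, with the trivial comonad. So $(K,\partial)$ is existentially closed if and only if every $\be$-variety $(V,s)$ over $K$ has a $K$-rational $\be$-point. The task is therefore to compare the $\be$-variety axioms with the New Axioms.

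\emph{New Axioms $\Rightarrow$ existentially closed.} Let $(V,s)$ be a $\be$-variety over $K$. We put $W:=s(V)\subseteq \tau^\partial V$. Since $s$ is a section of $\pi^\partial_V$, the restriction $\pi^\partial_V|_W\colon W\to V$ is an isomorphism with inverse $s$, so it is in particular separable. The New Axioms then give $a\in V(K)$ with $\partial(a)\in W(K)$. This means $\partial(a)=s(a')$ for some $a'$, and projecting by $\pi$ gives $a'=a$. Hence $\partial(a)=s(a)$, so $a\in (V,s)^\sharp(K)$, and Theorem \ref{mainThm2} shows that $K$ is existentially closed.

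\emph{Existentially closed $\Rightarrow$ New Axioms.} Let $W\subseteq\tau^\partial V$ with $W\to V$ separable; we may take $W$ irreducible and dominant over $V$. Let $b$ be a generic point of $W$ over $K$, with $a=\pi(b)$ generic in $V$. We want a $\be$-operator on $L=K(b)$, or on an extension of it, extending $\partial$ on $K$ and satisfying $\partial(a)=b$. Existential closedness, via Lemma \ref{lemma: red of formulas}, then transfers the formula ``$x\in V$ and $\partial(x)\in W$'' down to $K$.

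The construction runs as follows. By the adjunction defining $\tau^\partial$, the point $b\in\tau^\partial V(L)$ corresponds to a $\be$-operator $K[a]\to\be(L)$ extending $\partial$ and sending $a\mapsto b$; it is a $\be$-operator of the inclusion $K[a]\subseteq L$. It extends to $K(a)$ because $\be(L)$ is local: its elements with invertible $\pi$-image are units, since $\ker\pi_L$ is nilpotent, being the nilradical of a finite $L$-algebra. So we have $\partial\colon K(a)\to\be(L)$. The extension $K(a)\subseteq L$ is separable, but possibly transcendental. We pick a separating transcendence basis $t$ of $L$ over $K(a)$ and assign $\partial(t)$ arbitrarily, say $\partial(t)=\iota(t)$; this is possible since $K(a)(t)$ is purely transcendental, so any values define a ring map with the correct $\pi$-projection. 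The operator then extends uniquely along the separable algebraic, hence étale, extension $K(a,t)\subseteq L$. This gives a $\be$-field $(L,\partial_L)$ with $\partial_L(a)=b\in W$.

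The main obstacle is the point where the hypothesis $\neat$ enters. In Theorem \ref{mainThm2} it is only needed to obtain niceness, but here I expect it also to be necessary to handle the inseparability of $K(a)/K$. The construction above, however, only used separability of $L$ over $K(a)$, together with the fact that $\be$ is local (implicitly required by $\neat$) for the extension to $K(a)$. I would double-check that the étale-extension step (as in Proposition \ref{prop: pure assumption}) indeed works for a $\be$-operator of an inclusion $K(a)\to\be(L)$ whose target is a larger field. The argument should be the standard infinitesimal lifting for étale maps applied to $\be(L)\to L$, whose kernel is nilpotent.
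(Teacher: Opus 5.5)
Your proof is correct and is essentially the paper's own argument. In one direction you take $W=s(V)$ and apply Theorem~\ref{mainThm2}. In the other you put a $\be$-field structure on $K(a,b)$ by extending the adjunction-induced operator $K[a]\to\be(K[a,b])$ first to $K(a)$ and then along the separable extension $K(a)\subseteq K(a,b)$, exactly as the paper does, only with more detail.

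Your worry that $\neat$ is needed for the inseparability of $K(a)/K$ is unfounded, because the operator on $K[a]$ comes straight from the adjunction. $\neat$ is used only through Theorem~\ref{mainThm2} and to make $\be(L)$ local. That locality holds because elements of $\ker\pi_L$ are nilpotent by $\neat$, not because $\be(L)$ is a finite $L$-algebra; it need not be, e.g.\ for derivations of the Frobenius.
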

\begin{proof}
$\left( \Longrightarrow \right)$ Assume $\left( K, \partial\right)$ is existentially closed and take $V, W$ as in the axioms. Let us work inside some big algebraically closed field $\Omega$. Since the projection $W\to V$ is separable, it is by definition dominant. Thus $W=\locus_K \left( a, b \right)$ and $V=\locus_K \left( a \right)$ for some tuples $a, b\in \Omega$. Since $W\subseteq \tau^\partial V$, the is a natural $\be$-operator $\partial\colon K\left[ a \right] \to \be\left( K\left[ a,b \right]\right)$ of the inclusion $K\left[ a \right]\subseteq K\left[ a, b \right]$ such that $\partial\left( a \right) = \left( a,b \right)$. 
The operator $\partial$ extends to a $\be$-operator of the field extension $K\left( a \right) \subseteq K\left( a,b \right)$. This extension is separable so 
$\partial$ extends further to a $\be$-operator on the field $K\left( a,b \right)$. Let $x$ be a tuple of variables of length equal to the length of $a$ and let $\phi \left( x \right)$ be the quantifier-free $\mathcal{L}_\be\left( K \right)$-formula expressing that $\partial_V\left( x \right) \in W$. Since $\phi \left( x \right)$ has a solution in $K\left( a,b \right)\supseteq K$, it has a solution in $K$ by existential closedness. Thus $K$ satisfies the above axioms.

$\left( \Longleftarrow \right)$ Assume $\left( K, \partial\right)$ satisfies the axioms. We will check that $\left( K, \partial\right)$ satisfies the assumptions of Theorem \ref{mainThm2}, i.e. that every $\be$-variety over $K$ has a $\be$-point in $K$. Let $\left( V, s \right)$ be a $\be$-variety over $K$ and define $W := s\left[ V \right] \subseteq \tau^\partial V\subseteq \tau^\partial V$. Note that $W$ is a closed subset of $\tau^\partial V$, since it is equal to the set of all $b\in \tau^\partial V$ such that $s\left( \pi \left( b \right)\right) = b$. Moreover, the projection $W\to V$ is an isomorphism, so in particular it is separable, thus by the New Axioms there is some $a\in V\left( K \right)$ such that $\partial_V\left( a \right) \in W\left( K \right)$ and for this $a$ we have 
$$\partial_V\left( a \right) = s\left( \pi \left(\partial_V\left( a \right)  \right)\right) = s\left( a\right),$$
thus $a$ is a $\be$-point of $\left( V, s \right)$.
\end{proof}

\begin{remark}\label{remark: easy axioms}
Let us point out how Theorem \ref{thm: new axioms} relates to other results in the literature. In \cite[Theorem 3.8]{BHKK} Beyarslan, Hoffmann, Kamensky and Kowalski give an axiomatization of existentially closed $B$-fields with $B$ satisfying $\fr_B \left( \ker \pi_B \right) = 0$. In \cite{beta-op} the author and Kowalski do the same for $\be$-fields with $\be$ satisfying $\neat$ and by taking $\be = B_\otimes$ for $B$ as in the previous sentence one recovers the axiomatization given in \cite{BHKK}. The axioms in \cite[Theorem 4.5]{beta-op} are as follows.

\begin{quote}
For every $K$-varieties $V$ and $W$, if 
\begin{enumerate}
    \item $W\subseteq \tau^\partial V$,
    \item the projection $W\to V$ is dominant,
    \item the projection $E\to W$ is dominant,
\end{enumerate}
then there is some $a\in V\left( K \right)$ such that $\partial_V\left( a \right) \in W\left( K \right)$.
\end{quote}
Here $E$ is a certain algebraic subset of $\tau^\partial \left( W \right)$, defined as the equalizer of certain maps involving $V$ and $W$. Anyway, $E$ looks strange and unnatural, but the assumptions on $V, W$ are \textit{exactly} the conditions assuring that the natural $\be$-operator $\partial_V^W\colon K\left[ V \right] \to \be^\partial\left( K\left[ W \right] \right)$ extends to a $\be$-operator on the field $K\left( W \right)$ (see \cite[Proposition 4.4]{beta-op}).

In our New Axioms instead of introducing $E$ we demand that the projection $W\to V$ is separable. This condition is stronger than ``$E\to W$ is dominant'', thus our Theorem \ref{thm: new axioms} is stronger than \cite[Theorem 4.5]{beta-op} (since we axiomatize the same theory using fewer axioms). At the same time, we achieve an axiomatization of $\be\operatorname{CF}$ faster and with less technicalities than in \cite{BHKK} or \cite{beta-op}. Also, Theorem \ref{thm: new axioms} answers \cite[Question 4]{DerFro}, which asked for an axiomatization of this form in the case of derivations of the Frobenius map (an answer to this question was also achieved in the paper \cite{DerFro2} by the author using a completely different method, see \cite[Remark 3.13]{DerFro2} there; see also Remark \ref{remark: blum}).
\end{remark}

Let us move to model theory. We will prove that for $\left( \be, \varphi \right)$ such that $\neat$ the theory $\bvcf$ eliminates quantifiers after adding the inverse of the Frobenius to the language. \textbf{Throughout the rest of this subsection we assume that $\neat$.}

First we need a certain easy lemma, which is nonetheless very general and useful. Let us introduce a few local definitions. Let $\left( K, \partial \right)$ be a field of characteristic exponent $p$ (i.e. $p$ is the characteristic of $K$ if it is positive and $p=1$ otherwise) with some \textit{operators}, i.e. a tuple (possibly infinite) of unary functions $\partial = \left( \partial_i\colon K\to K \right)_{i \in I}$. We define the \textit{constants} of $\left( K, \partial\right)$ as the set of common zeroes of all $\partial_i$ and denote it by $K^\partial$. We assume that $K^\partial$ is a field. We also assume that every $\partial_i$ is additive and ``$\fr^m$-linear over the constants'', i.e. there is some natural number $m_i$ such that for any $a\in K^\partial,\ x \in K$ we have $\partial_i\left( ax \right) = a^{p^{m_i}} \partial_i\left( x \right)$. Note that under the assumption $\neat$, $\be$-operators fit into this set-up and constants in the sense above are the same as the constant in the sense of $\be$-operators.

Let $\left( K, \partial \right) \subseteq \left( L, \partial' \right)$ be an extension of fields with operators in the above sense, that is, $\partial'|_K = \partial$ and the numbers $m_i$ mentioned above are the same for $K$ and $L$. By abuse of notation, we will use the same symbol $\partial$ for the operators on $K$ and on $L$. The following result was proved in \cite{DerFro2} by the author.

\begin{lemma}
\label{lindisj}
Let $\left( K, \partial \right) \subseteq \left( L, \partial \right)$ be as above. Assume that $p>1$, $L^p\subseteq L^\partial$ and that $K$ is strict, i.e. $K^\partial = K^p$. Then $L^\partial$ and $K$ are linearly disjoint over $K^\partial$.
\end{lemma}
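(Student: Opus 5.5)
We argue by contradiction with a minimal counterexample, in the usual style of proofs of linear disjointness over constants (as in the classical Wronskian argument for derivations). Recall that $L^\partial$ and $K$ are linearly disjoint over $K^\partial$ iff every finite tuple $c_1,\dotsc,c_n\in L^\partial$ which is linearly independent over $K^\partial$ remains linearly independent over $K$. Note first that $K^\partial = K\cap L^\partial$, since $\partial$ on $L$ extends $\partial$ on $K$. Also, $L^\partial$ is a field containing $L^p$ by assumption.

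Suppose the claim fails. We choose $c_1,\dotsc,c_n\in L^\partial$, linearly independent over $K^\partial$ but dependent over $K$, with $n$ minimal. Take a nontrivial relation $\sum_i a_i c_i = 0$ with $a_i\in K$. By minimality all $a_i\neq 0$, so after dividing by $a_1$ we may assume $a_1 = 1$. Minimality also gives uniqueness: any other relation $\sum_i a'_i c_i = 0$ with $a'_1=1$ must coincide with this one, since otherwise the difference is a shorter nontrivial $K$-relation among $c_2,\dotsc,c_n$. Those are still $K^\partial$-independent, which contradicts minimality.

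The key step is to produce a second relation using the operators. Apply $\partial_j$ to $\sum_i a_i c_i=0$. Since $c_i\in L^\partial$, the $\fr^{m_j}$-linearity over constants gives
$$0=\partial_j\Big(\sum_i c_i a_i\Big)=\sum_i c_i^{p^{m_j}}\partial_j(a_i).$$
This is not yet a relation among the $c_i$ themselves, so we use $p$-th powers instead. Raise the original relation to the power $p^{m_j}$ to get $\sum_i a_i^{p^{m_j}}c_i^{p^{m_j}}=0$. We would rather have a relation in the $c_i$, and this is where strictness enters. The elements $a_i^{p}\in K^p=K^\partial$ are constants, which suggests rewriting the relation as $\sum_i c_i^p a_i^p=0$ and passing to coefficients in $K^\partial$.

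So we aim for the following cleaner route. Since $\partial_j(c_i x)=c_i^{p^{m_j}}\partial_j(x)$, the $c_i^{p^{m_j}}$ are again elements of $L^\partial$ (indeed of $L^p$), and what we obtain are $K$-relations among the tuple $c^{p^{m_j}}$. Frobenius is injective, so $c^{p^{m_j}}$ is linearly independent over $(K^\partial)^{p^{m_j}}$. The relation $\sum_i c_i^{p^{m_j}}a_i^{p^{m_j}}=0$ is the unique normalized one for the tuple $c^{p^{m_j}}$: uniqueness transfers because any other normalized relation $\sum_i b_i c_i^{p^{m_j}}=0$ yields, after extracting $p^{m_j}$-th roots inside the perfect closure, a relation for $c$. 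Comparing with the differentiated relation, whose first coefficient is $\partial_j(1)=0$, we get that $\sum_i c_i^{p^{m_j}}\partial_j(a_i)=0$ plus a scalar multiple of the normalized relation must vanish coefficientwise. This forces $\partial_j(a_i)=0$ for all $i$ and all $j$.

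Hence every $a_i\in K^\partial$, and the relation $\sum_i a_i c_i=0$ is a nontrivial relation over $K^\partial$, which is a contradiction.

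The main obstacle is the uniqueness-transfer step: the differentiated relation lives in the Frobenius-twisted tuple $c^{p^{m_j}}$, so its comparison with the original relation must be done carefully. We need that a shorter $K$-relation among $c_2^{p^m},\dotsc,c_n^{p^m}$ would give a shorter $K^{1/p^m}$-relation among $c_2,\dotsc,c_n$. Handling this cleanly might require choosing the counterexample minimal over all twisted tuples simultaneously. An alternative is to use strictness directly: since $K^p=K^\partial$, linear independence over $K^\partial$ can be compared with linear independence over $K$ via $p$-bases, reducing to the case where all $m_j=0$. If the twisting cannot be handled, this reduction is the approach we would try first.
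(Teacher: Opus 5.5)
Your skeleton (minimal counterexample, normalizing $a_1=1$, applying each $\partial_j$, concluding that all $a_i$ are constants) is the paper's. However, the step that carries the content is missing, as you suspected. The differentiated relation $\sum_i \partial_j(a_i)c_i^{p^{m_j}}=0$ has first coefficient $\partial_j(1)=0$. So what you need is exactly that $c_2^{p^{m_j}},\dotsc,c_n^{p^{m_j}}$ are linearly independent over $K$.

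Your ``uniqueness transfer'' does not give this. Extracting $p^{m_j}$-th roots turns a $K$-relation among these powers into a relation among $c_2,\dotsc,c_n$ with coefficients in $K^{1/p^{m_j}}$, and minimality of $n$ says nothing about such relations. Moreover, the justification you give uses neither $L^p\subseteq L^\partial$ nor strictness, so it cannot be correct. In the Example following the lemma, where $K$ is not strict, the conclusion fails exactly at this point. There $c_1^p=X$ and $c_2^p=Y$ lie in $K$, so the tuple of $p$-th powers has a two-dimensional space of $K$-relations, while $c$ itself has a one-dimensional one. Accordingly, the differentiated relation there is nonzero.

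The missing idea is a descending induction that uses both hypotheses. Put $m=m_j$. For $0\leqslant k\leqslant m$, the tuple $(c_2^{p^k},\dotsc,c_n^{p^k})$ lies in $L^\partial$ (for $k\geqslant 1$ because $L^p\subseteq L^\partial$) and has length $n-1$. So by minimality it is dependent over $K$ only if it is dependent over $K^\partial=K^p$. A relation $\sum_i b_i^p c_i^{p^k}=0$ with $b_i\in K$ gives, after taking $p$-th roots, the $K$-relation $\sum_i b_i c_i^{p^{k-1}}=0$. Descending from $k=m$ to $k=0$, a $K$-dependence among $c_2^{p^m},\dotsc,c_n^{p^m}$ would produce one among $c_2,\dotsc,c_n$. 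By minimality again this gives one over $K^\partial$, contradicting the choice of $c$.

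This is the paper's argument. With it, the differentiated relation vanishes coefficientwise directly. You then need neither uniqueness of a normalized relation for the tuple of $p^{m_j}$-th powers, nor minimality over all twisted tuples, nor the vague reduction to $m_j=0$ via $p$-bases.
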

\begin{proof}
Assume the conclusion is not true and take the minimal $n>1$ such that there are some $x_1,\dotsc, x_n\in L^\partial$ linearly dependent over $K$, but linearly independent over $K^\partial$. By the minimality assumption, there are $a_1, \dotsc , a_n\in K\setminus \left\{ 0 \right\}$ such that:

$$a_1x_1+\dotsc + a_nx_n = 0$$
Then for any $i\in I$:

$$0=\partial_i\left( \frac{a_1}{a_n}x_1+\dotsc + \frac{a_{n-1}}{a_n}x_{n-1} +x_n\right)=
\partial_i\left(\frac{a_1}{a_n}\right)x_1^{p^{m_i}}+\dotsc +  \partial_i\left(\frac{a_{n-1}}{a_n}\right)x_{n-1}^{p^{m_i}}$$

If some $\partial_i \left( \frac{a_j}{a_n} \right)$ is nonzero, then $x_1^{p^{m_i}},\dotsc , x_{n-1}^{p^{m_i}}\in L^p\subseteq L^\partial$ are linearly dependent over $K$, so by the minimality assumption on $m$ we get that $x_1^{p^{m_i}},\dotsc , x_{n-1}^{p^{m_i}}$ are linearly dependent over $K^\partial=K^p$, hence $x_1^{p^{m_i-1}}, \dotsc , x_{n-1}^{p^{m_i-1}}$ are linearly dependent over $K$. Repeating this reasoning yields that $x_1,\dotsc, x_{n-1}$ are linearly dependent over $K^p$, contrary to the assumption, that they are independent over $K^\partial=K^p$.

Therefore for any $i$ we have $\partial_i\left( \frac{a_1}{a_n} \right) = \dotsc = \partial_i\left(\frac{a_{n-1}}{a_n}\right)=0$, hence $\frac{a_1}{a_n},\dotsc , \frac{a_{n-1}}{a_n}\in K^\partial$. By the strictness assumption we get that for some $b_1,\dotsc, b_{n-1}\in K\setminus \left\{ 0 \right\}:$
$$a_1=b_1^p a_n, \dotsc , a_{n-1}=b_{n-1}^p a_n,$$
thus
$$0 = a_1x_1+\dotsc + a_nx_n = a_n \left( b_1^p x_1 + \dotsc +b_{n-1}^p x_{n-1} + x_n \right),$$
hence $x_1, \dotsc , x_n$ are linearly dependent over $K^p=K^\partial$, contrary to the assumption.
\end{proof}

From the proof it is clear that in the linear case (i.e. for every $i$ we have $m_i=0$) we have the following

\begin{lemma}
\label{lindisj2}
Let $\left( K, \partial \right) \subseteq \left( L, \partial \right)$ be an extension of fields with operators linear over the constants. Then $L^\partial$ and $K$ are linearly disjoint over $K^\partial$.
\end{lemma}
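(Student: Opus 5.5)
The plan is to run the minimal-counterexample argument from the proof of Lemma \ref{lindisj}. With all $m_i=0$ the Frobenius-descent step disappears, and so do the hypotheses $p>1$, $L^p\subseteq L^\partial$ and strictness of $K$. Here ``linear over the constants'' means that each $\partial_i$ is additive and satisfies $\partial_i\left( ax \right) = a\partial_i\left( x \right)$ for $a\in L^\partial$ and $x\in L$. Since $K^\partial\subseteq L^\partial$ are fields, it suffices to show the following: any elements of $L^\partial$ that are linearly independent over $K^\partial$ stay linearly independent over $K$.

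Suppose this fails. Choose $n$ minimal such that there are $x_1,\dotsc,x_n\in L^\partial$, linearly independent over $K^\partial$, with a nontrivial relation $a_1x_1+\dotsc+a_nx_n=0$ where $a_j\in K$. Clearly $n\geqslant 2$, because a single nonzero $x_1$ cannot be killed by a nonzero scalar. By minimality every $a_j$ is nonzero; otherwise we could drop a term and obtain a shorter counterexample, since subsets of independent sets are independent. Dividing by $a_n$, we may assume $a_n=1$.

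Now apply each $\partial_i$ to the relation. By additivity and linearity over the constants $L^\partial$ (each $x_j$ lies in $L^\partial$), we get
$$0=\partial_i\left(a_1\right)x_1+\dotsc+\partial_i\left(a_{n-1}\right)x_{n-1},$$
because $\partial_i\left(x_n\right)=0$. The coefficients $\partial_i\left(a_j\right)$ lie in $K$, since $\partial$ on $L$ extends $\partial$ on $K$. The elements $x_1,\dotsc,x_{n-1}$ are independent over $K^\partial$, so by minimality of $n$ this relation must be trivial. Hence $\partial_i\left(a_j\right)=0$ for all $i$ and all $j<n$, that is, $a_1,\dotsc,a_{n-1}\in K^\partial$.

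The original relation $a_1x_1+\dotsc+a_{n-1}x_{n-1}+x_n=0$ is therefore a nontrivial dependence over $K^\partial$, contradicting the choice of the $x_j$. There is no real obstacle here. The only point to check is that linearity over the constants is used with scalars $x_j\in L^\partial$ acting on elements $a_j\in K$, so in the extension $L$ rather than in $K$; this is exactly what the hypothesis provides.
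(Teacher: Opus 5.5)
Your proof is correct and is the paper's argument. The paper obtains this lemma by noting that the minimal-counterexample proof of Lemma \ref{lindisj} goes through with all $m_i=0$: the Frobenius-descent step and the appeal to strictness drop out, because the normalized coefficients $a_j/a_n$ land directly in $K^\partial$, exactly as in your write-up.
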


The strictness assumption in Lemma \ref{lindisj} is necessary (which gives a negative answer to Question 1 in \cite{DerFro}), as shown by the example below.

\begin{example}
Take $K=\mathbb{F}_p\left( X, Y, \lambda, \mu \right), L=\mathbb{F}_p\left( X^{1/p}, Y^{1/p}, \lambda, \mu \right)$ and define a derivation of the Frobenius map on $L$ by setting
$$\partial \left( X^{1/p} \right) = \partial \left( Y^{1/p} \right) = 0, \hspace{0.5cm} \partial \left( \lambda \right) = Y, \hspace{0.5cm} \partial\left( \mu \right) = -X.$$
We will show that $L^\partial$ and $K$ are not linearly disjoint over $K^\partial$. Note that 
$$\partial \left( \lambda X^{1/p} + \mu Y^{1/p} \right) = X\partial \left( \lambda \right)+ Y \partial \left( \mu \right) = 0.$$
Thus, $X^{1/p}, Y^{1/p}, \lambda X^{1/p} + \mu Y^{1/p}$ are elements of $L^\partial$, linearly dependent over $K$. However, they are independent over $K^\partial$: indeed, for any $a, b, c \in K^\partial$, if
$$aX^{1/p}+bY^{1/p} + c\left( \lambda X^{1/p} + \mu Y^{1/p} \right)=0$$
then 
$$\left( a+c\lambda \right) X^{1/p} + \left( b+c\mu \right) Y^{1/p} = 0,$$
but $X^{1/p}$ and $Y^{1/p}$ are linearly independent over $K$, so $a+c\lambda = b+c\mu = 0.$ If $c\neq 0$, then $\lambda = -\frac{a}{c}\in K^\partial$, which is not the case. Thus $c=0$ and therefore $a=b=0$, hence $X^{1/p}, Y^{1/p}, \lambda X^{1/p} + \mu Y^{1/p}$ are linearly independent over $K^\partial$.
\end{example}

Lemma \ref{lindisj} implies that strict $\be$-fields are $\be$-differentially perfect, i.e. any $\be$-field extension is separable (for fields with a derivation of the Frobenius morphism this was asked in Question 2 in \cite{DerFro}).

\begin{lemma}\label{lambda_0 => separable}
Let $K$ be a model of $\bvcf$ and let $K_0$ be subfield of $K$. If $K$ is $\mathcal{L}^{\lambda_0}_\mathcal{B}$-substructure of $K$ then the extension $K_0\subseteq K$ is separable.
\end{lemma}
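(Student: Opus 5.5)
We read the statement as: $K$ is a model of $\bvcf$ and $K_0$ is an $\mathcal{L}^{\lambda_0}_\be$-substructure of $K$, i.e. $K_0$ is a $\be$-subfield closed under the $\lambda_0$-functions, here understood as the $p$-th root function restricted to the constants. The plan is to show that $K_0$ is strict and that $K^p$ and $K_0$ are linearly disjoint over $K_0^p$, which is the standard characterization of separability of $K_0\subseteq K$. If $p=1$ there is nothing to prove, so assume $\operatorname{char} K = p>0$.

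\textbf{Step 1: $K$ is strict.} By Lemma \ref{extension by pth roots}, any constant $a\in K^\partial$ has a $p$-th root in some $\bv$-field extension of $K$. Existential closedness of $K$ then gives $a\in K^p$, so $K^\partial=K^p$.

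\textbf{Step 2: $K_0$ is strict.} Take $a\in K_0^\partial$. Then $a\in K^\partial=K^p$, so $a^{1/p}\in K$. Since $K_0$ is closed under $\lambda_0$, we get $a^{1/p}\in K_0$. Conversely, under $\neat$ every $p$-th power is a constant, because each $\partial_i$ is $\fr^{m}$-linear over the constants with $\ker\pi$ Frobenius-trivial. Hence $K_0^\partial=K_0^p$.

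\textbf{Step 3: apply Lemma \ref{lindisj}.} Apply it to the extension $(K_0,\partial)\subseteq (K,\partial)$. The hypotheses are:
\begin{itemize}
\item $p>1$;
\item $K^p\subseteq K^\partial$, by the same remark as in Step 2;
\item $K_0$ is strict, by Step 2.
\end{itemize}
The lemma then says that $K^\partial$ and $K_0$ are linearly disjoint over $K_0^\partial$. By Steps 1 and 2 this reads: $K^p$ and $K_0$ are linearly disjoint over $K_0^p$. This is exactly MacLane's criterion for separability of $K_0\subseteq K$.

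The main obstacle is bookkeeping rather than substance. First, one must verify that under $\neat$ the $\be$-operators really satisfy the hypotheses of Lemma \ref{lindisj}: additivity, $\fr^{m_i}$-linearity over the constants, and $p$-th powers being constants. The paper asserts this just before the lemma, and I would cite that. Second, one must fix the precise meaning of $\lambda_0$ as the $p$-th root on constants; with any reasonable convention (for instance $\lambda_0(x)=x^{1/p}$ if $x\in K^\partial$ and $0$ otherwise) Step 2 goes through unchanged.
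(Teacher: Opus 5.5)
Your proof is correct and follows the paper's argument: $K$ is strict by Lemma \ref{extension by pth roots}, closure of $K_0$ under $\lambda_0$ gives $K_0^\partial = K^p\cap K_0 = K_0^p$, and Lemma \ref{lindisj} then gives linear disjointness of $K^p$ and $K_0$ over $K_0^p$, which is separability. One small correction: the inclusion $K^p\subseteq K^\partial$ does not come from linearity of the $\partial_i$ over the constants. It comes directly from $\partial(a^p)=\partial(a)^p=(\iota(a)+n)^p=\iota(a^p)$, where $n=\partial(a)-\iota(a)\in\ker\pi$ satisfies $n^p=0$.
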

\begin{proof}
By Remark \ref{extension by pth roots} we have that $K^\partial = K^p$ an assumption we have that $K^p\cap K_0 = K_0^p$, thus $K_0^p=K_0^\partial$. Therefore by Lemma \ref{lindisj} we have that $K^\partial=K^p$ and $K_0$ are linearly disjoint over $K_0^p$, i.e. the extension $K_0\subseteq K$ is separable.
\end{proof}


We need the following technical lemma, which will be also useful in Section \ref{sec: PAC}. The same sort of reduction happens in the middle of the proof of \cite[Theorem 4.34]{PAC}. Here we spell it out in details.

\begin{lemma}\label{lemma: reg}
Let $K\subseteq L$ be an $\mathcal{L}_\be^{\lambda_0}$-extension of strict $\be_\varphi$-fields. Then $K$ is existentially closed in $L$ in the language $\mathcal{L}_\be^{\lambda_0}$ if and only if it is existentially closed in the language $\mathcal{L}_\be$.
\end{lemma}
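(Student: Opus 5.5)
We must show: for strict $\be_\varphi$-fields $K\subseteq L$ with $K$ an $\mathcal{L}_\be^{\lambda_0}$-substructure of $L$, existential closedness in $\mathcal{L}_\be^{\lambda_0}$ is equivalent to existential closedness in $\mathcal{L}_\be$. Here $\lambda_0$ is the $p$-th root function on $p$-th powers (zero elsewhere).

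The direction from $\mathcal{L}_\be^{\lambda_0}$ to $\mathcal{L}_\be$ is immediate: $\mathcal{L}_\be$ is a reduct of $\mathcal{L}_\be^{\lambda_0}$ and $K$ is a substructure in both languages. So every existential $\mathcal{L}_\be(K)$-formula is also an existential $\mathcal{L}_\be^{\lambda_0}(K)$-formula.

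For the converse, assume $K$ is existentially closed in $L$ in $\mathcal{L}_\be$. Take a quantifier-free $\mathcal{L}_\be^{\lambda_0}(K)$-formula $\phi(\bar x)$ with a solution $\bar a\in L$.

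\textbf{Step 1: remove nested $\lambda_0$.} Introduce a new variable for each subterm of the form $\lambda_0(t)$. By the usual case split on whether $t$ is a $p$-th power, $\phi$ becomes equivalent to an existential formula whose matrix is a Boolean combination of:
\begin{itemize}
\item $\mathcal{L}_\be$-atomic formulas;
\item conditions ``$y=\lambda_0(t)$''.
\end{itemize}
Each condition $y=\lambda_0(t)$ is itself equivalent to
$$\bigl(y^p=t\bigr)\ \vee\ \bigl(y=0\wedge \neg(\exists z)\, z^p=t\bigr).$$
The positive disjuncts are harmless; the only problematic pieces are the negative statements ``$t$ is not a $p$-th power''.

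\textbf{Step 2: express non-$p$-th powers without quantifiers.} In a strict $\be_\varphi$-field the $p$-th powers are exactly the constants, $L^p=L^\partial$. The condition $c\in L^\partial$ is the quantifier-free $\mathcal{L}_\be$-formula $\partial(c)=\iota(c)$. Hence ``$t$ is not a $p$-th power'' is equivalent in $L$ to $\partial(t)\neq\iota(t)$, and ``$t$ is a $p$-th power'' to $\partial(t)=\iota(t)$.

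This yields an existential $\mathcal{L}_\be(K)$-formula $\phi'$ that holds of $\bar a$ (extended by the witnesses for the $\lambda_0$-terms) in $L$.

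\textbf{Step 3: transfer to $K$.} By existential closedness in $\mathcal{L}_\be$, $\phi'$ has a solution in $K$. Since $K$ is also strict, the same translation is valid in $K$: there $\partial(t)=\iota(t)$ holds iff $t\in K^p$. Reading the translation backwards, this solution in $K$ satisfies $\phi$ in the language $\mathcal{L}_\be^{\lambda_0}$.

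The main obstacle is making sure the translation is uniform, i.e. that the same formula works in both $K$ and $L$. This is exactly why strictness is assumed for both fields. The $\lambda_0$-substructure hypothesis is what makes the value of $\lambda_0$ on elements of $K$ agree whether it is computed in $K$ or in $L$.
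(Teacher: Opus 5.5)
Your argument is correct and rests on the same idea as the paper's proof: strictness turns ``$t$ is not a $p$-th power'' into the quantifier-free $\mathcal{L}_\be$-condition $\partial(t)\neq\iota(t)$, so each occurrence of $\lambda_0$ can be replaced by a new variable $z$ subject to $z^p=t$, or to $z=0$ together with $\partial(t)\neq\iota(t)$. The difference is only in execution: the paper first reduces $\phi$ to a conjunction of equations in $\partial\bar x,\lambda_0(\bar x)$, uses the witness $\bar a$ to choose the relevant case for each coordinate (the partition into $A$, $B$, $C$), and moves the $K$-solution back through $L$ via the $\mathcal{L}_\be^{\lambda_0}$-substructure hypothesis; your translation is uniform over all strict $\be_\varphi$-fields, and you read it back directly in $K$ using that $K$ is strict.
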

\begin{proof}
Assume that $K$ is existentially closed in $L$ in the language $\mathcal{L}_\be$ and let $\phi\left( \bar{x} \right)$ be a quantifier-free $\mathcal{L}_\be^{\lambda_0}\left( K \right)$-formula realisable in $L$. Our goal is to prove that $\phi\left( \bar{x} \right)$ is realisable in $K$. We will do this by ``rewriting $\phi$ into the language $\mathcal{L}_\be$'', i.e. we will find a quantifier-free $\mathcal{L}_\be\left( K \right)$-formula $\phi_0\left( \bar{x}' \right)$, where $\bar{x}'$ extends $\bar{x}$, such that
\begin{enumerate}
    \item $\phi_0\left( L \right) \neq \emptyset$ (hence also $\phi_0\left( K \right) \neq \emptyset$),
    \item $L \models \phi_0\left( \bar{x}' \right)\to \phi\left( \bar{x} \right)$.
\end{enumerate}
This immediately implies that $\phi\left( K \right)\neq \emptyset$, as desired.

Using the standard procedures we may assume that $\phi\left( \bar{x} \right) = \psi\left( \partial\left( \bar{x} \right), \lambda_0\left( \bar{x} \right) \right)$, where $\psi\left( \bar{y}, \bar{z} \right)$ is a formula in language of pure fields with parameters from $K$.\footnote{Recall that the first coordinate of the tuple $\partial\left( x \right)$ is $x$.} We can moreover assume that $\psi\left( \bar{y}, \bar{z} \right)$ is of the form $\bigwedge_{i=1}^n f_i\left( \bar{y}, \bar{z} \right)=0$ where $f_1,\dotsc , f_n$ are $K$-polynomials. Say that $\bar{x} = \left( x_1,\dotsc , x_m \right), \bar{z} = \left( z_1,\dotsc , z_m \right)$. Let $\bar{a} =\left( a_1,\dotsc a_m \right)\in L$ be a realization of $\phi \left( \bar{x} \right)$. We partition the set $\left\{ 1, \dotsc , m\right\}$ into three parts $A, B, C$ so that
\begin{itemize}
    \item $A$ consists of those $i$ for which $\lambda_0\left( a_i \right)\neq 0$,
    \item $B$ consists of those $i$ for which $\lambda_0\left( a_i \right) =  0$ and $a_i\neq 0$,
    \item $C$ consists of those $i$ for which $a_i=0$.
\end{itemize}
Define $\bar{x}'$ as the concatenation of $\bar{x}$ and $\bar{z}$. Finally, define $\phi_0\left( \bar{x}'\right)$ as the following formula
$$\psi\left( \bar{x}, \bar{z} \right) \wedge \bigwedge_{i\in A}\left(  z_i^p = x_i\right) \wedge \bigwedge_{i\in B}\left(  z_i=0 \wedge \bigvee_{j=1}^{e-1} \partial_j\left( x_i \right) \neq 0\right)\wedge \bigwedge_{i\in C}\left(  x_i=0\right).$$
Since $L$ is strict, we have that for any nonzero $b\in L$ it holds that $\lambda_0 \left( b \right) = 0$ if and only if $\partial\left( b \right) \neq \iota\left( b \right)$, i.e. for some $j$ we have $\partial_j\left( b \right) \neq 0$. Thus the tuple $\left( \bar{a} , \lambda_0\left( \bar{a} \right) \right)$ witnesses that $\phi_0\left( L \right) \neq \emptyset$. Moreover by the same fact we have that $L\models \phi_0\left( \bar{x}, \bar{z} \right) \to\left( \phi\left( \bar{x} \right) \wedge \bar{z} = \lambda_0\left( \bar{x} \right)\right)$, hence $\phi_0$ is as desired.
\end{proof}
Immediately from the above lemma we get the following result.
\begin{cor}\label{cor: mc with lambda_0}
The theory $\be_\varphi\operatorname{CF}$ is the model companion of the theory of strict $\bv$-fields in the language $\mathcal{L}_\be^{\lambda_0}$.
\end{cor}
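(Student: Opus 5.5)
We have to show that $\be_\varphi\operatorname{CF}$, viewed as an $\mathcal{L}_\be^{\lambda_0}$-theory with $\lambda_0$ interpreted as the inverse of Frobenius on $p$-th powers and $0$ elsewhere, is the model companion of the theory $T_s$ of strict $\bv$-fields. By definition this means: (a) every model of $\be_\varphi\operatorname{CF}$ is a strict $\bv$-field; (b) every strict $\bv$-field embeds into a model of $\be_\varphi\operatorname{CF}$; (c) $\be_\varphi\operatorname{CF}$ is model complete in $\mathcal{L}_\be^{\lambda_0}$. Since $\lambda_0$ is $\mathcal{L}_{\operatorname{ring}}$-definable, $\be_\varphi\operatorname{CF}$ is naturally an $\mathcal{L}_\be^{\lambda_0}$-theory. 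We also use that $T_s$ is inductive in $\mathcal{L}_\be^{\lambda_0}$, because strictness $K^\partial=K^p$ can be written as the $\forall$-axiom $\partial x=\iota(x)\to \lambda_0(x)^p=x$.

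For (a): an existentially closed $\bv$-field is strict by Lemma \ref{extension by pth roots}. The remark after it says that if $a\in K^\partial\setminus K^p$, we may adjoin $a^{1/p}$ as a $\bv$-extension, and existential closedness then gives a $p$-th root of $a$ in $K$.

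For (b): start from a strict $\bv$-field $K$ and build a chain $K=K_0\subseteq K_1\subseteq\dotsb$ of $\bv$-fields in which every existential $\mathcal{L}_\be(K_i)$-formula that is realised in some $\bv$-extension of $K_i$ is realised in $K_{i+1}$. The union is an existentially closed $\bv$-field, hence a model of $\be_\varphi\operatorname{CF}$ (we work under the standing assumption that this theory exists). It remains to check that the embedding of $K$ is an $\mathcal{L}_\be^{\lambda_0}$-embedding, i.e. that $K\subseteq M$ is separable, so that $K^p=M^p\cap K$. The model $M$ is strict by (a), and $K$ is strict by hypothesis, so Lemma \ref{lindisj} (when $p>1$) makes $M^\partial=M^p$ linearly disjoint from $K$ over $K^\partial=K^p$. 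Hence $K\subseteq M$ is separable and $\lambda_0$ is preserved. This is the same argument as in Lemma \ref{lambda_0 => separable}. In characteristic $0$ there is nothing to check.

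For (c): let $K\subseteq L$ be an $\mathcal{L}_\be^{\lambda_0}$-extension of models of $\be_\varphi\operatorname{CF}$. Since $K$ is existentially closed among all $\bv$-fields, it is existentially closed in $L$ in $\mathcal{L}_\be$. Both fields are strict by (a), so Lemma \ref{lemma: reg} upgrades this to existential closedness in $\mathcal{L}_\be^{\lambda_0}$. By Robinson's test, $\be_\varphi\operatorname{CF}$ is model complete in $\mathcal{L}_\be^{\lambda_0}$.

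The only delicate point is step (b): we must ensure that the embedding into an existentially closed field respects $\lambda_0$. This is exactly where strictness of the base field and Lemma \ref{lindisj} are needed.
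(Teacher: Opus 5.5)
Your argument is correct and is the paper's argument made explicit: the paper deduces the corollary immediately from Lemma \ref{lemma: reg}, leaving implicit exactly the points you spell out, namely that existentially closed $\bv$-fields are strict (Lemma \ref{extension by pth roots}) and that a strict $\bv$-field sits inside an existentially closed one as an $\mathcal{L}_\be^{\lambda_0}$-substructure. For that second point you do not need Lemma \ref{lindisj}, since $M^p\cap K\subseteq M^\partial\cap K=K^\partial=K^p$ already shows that $\lambda_0$ is preserved.
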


Finally, putting all pieces together we arrive at the main result of this Subsection.

\begin{theorem}\label{qe}
The theory $\be_\varphi\operatorname{CF}$ has quantifier elimination in the language $\mathcal{L}^{\lambda_0}_\mathcal{B}$.
\end{theorem}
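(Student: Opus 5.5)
The plan is the standard model-companion criterion: a model companion $T^*$ of a universal theory $T$ with the amalgamation property is its model completion, and hence has quantifier elimination. By Corollary \ref{cor: mc with lambda_0}, $\be_\varphi\operatorname{CF}$ is the model companion, in $\mathcal{L}_\be^{\lambda_0}$, of the theory $T$ of strict $\bv$-fields. Two things therefore remain.

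First, check that $T$ is universal in $\mathcal{L}_\be^{\lambda_0}$. Strictness together with the interpretation of $\lambda_0$ as the inverse of Frobenius on constants (and $0$ elsewhere) is universally expressible. Concretely, $\lambda_0(x)^p=x$ whenever $\partial(x)=\iota(x)$, and $\lambda_0(x)=0$ otherwise. Being a $\be$-field and satisfying $\varphi$ are universal, and inverses are handled in the usual way. Substructures of models are then strict $\bv$-fields, and extensions of models in this language are $\mathcal{L}_\be^{\lambda_0}$-extensions.

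Second, and this is the main step, prove amalgamation for $T$ over arbitrary substructures. Since $T$ is universal, it suffices to amalgamate models $M,N$ of $\be_\varphi\operatorname{CF}$ over a common $\mathcal{L}_\be^{\lambda_0}$-substructure $K_0$. The argument goes as follows.

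\begin{enumerate}
\item By Lemma \ref{lambda_0 => separable}, both $K_0\subseteq M$ and $K_0\subseteq N$ are separable. Note that $K_0$ is a strict $\bv$-field; this follows from Lemma \ref{lindisj} applied with the closure of the domain under $\lambda_0$.
\item Under $\neat$, Proposition \ref{amal} requires $\be$ to be local, which we should have here; if not, I would redo its argument. It yields a $\bv$-field $F$ amalgamating $M$ and $N$ over $K_0$, with $M,N\subseteq F$ separable. Compositum-level constructions keep $\varphi$ by item (1) of Definition \ref{def: phi}.
\item $F$ need not be strict. To fix this, pass to an extension $F'$ which is strict, for instance a model of $\be_\varphi\operatorname{CF}$ containing $F$; it is strict by Lemma \ref{extension by pth roots}.
\end{enumerate}

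The main obstacle is the embeddings in the third step. We need $M\subseteq F'$ to be an $\mathcal{L}_\be^{\lambda_0}$-embedding, i.e. $M^p=F'^p\cap M$, which is exactly separability of $M\subseteq F'$ (similarly for $N$). Here Lemma \ref{lambda_0 => separable} runs the other way: since $M$ is strict and $F'^p\subseteq F'^\partial$, Lemma \ref{lindisj} applied to $M\subseteq F'$ gives that $F'^\partial$ and $M$ are linearly disjoint over $M^\partial=M^p$. Hence $F'^p\cap M\subseteq F'^\partial\cap M=M^p$, so $\lambda_0$ is preserved. Thus any $\bv$-extension of a strict field (``$\be$-differential perfectness'') preserves $\lambda_0$, and we do not even need to track separability of $F\subseteq F'$.

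With universality and amalgamation established, the standard theorem gives that $\be_\varphi\operatorname{CF}$ is the model completion of $T$. As $T$ is universal, this yields quantifier elimination in $\mathcal{L}_\be^{\lambda_0}$.
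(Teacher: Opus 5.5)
Your proposal is correct and follows the paper's proof, which likewise combines the model completeness coming from Corollary \ref{cor: mc with lambda_0} with the amalgamation of Proposition \ref{amal}. You make explicit what the paper leaves implicit: separability of $K_0\subseteq M,N$ via Lemma \ref{lambda_0 => separable}, pushing the amalgam into a strict field, and preservation of $\lambda_0$ via Lemma \ref{lindisj}; one small correction is that strictness of $K_0$ follows directly from $\lambda_0$-closure, $K_0^\partial=M^\partial\cap K_0=M^p\cap K_0=K_0^p$, rather than from Lemma \ref{lindisj}.
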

\begin{proof}
By Corollary \ref{cor: mc with lambda_0} the theory $\be_\varphi\operatorname{CF}$ is the model companion of the theory of strict $\bv$-fields, in particular it is model complete as an $\mathcal{L}_\be^{\lambda_0}$-theory. Since any model complete theory with the amalgamation property admits quantifier elimination, we are done by Proposition \ref{amal}.
\end{proof}

\begin{cor}
Assume that $\ka$ is perfect. Then, the theory $\be_\varphi\operatorname{CF}$ is complete.
\end{cor}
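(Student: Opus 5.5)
The plan is to combine the quantifier elimination of Theorem \ref{qe} with the existence of a small common substructure that embeds into every model. A theory with quantifier elimination is complete as soon as all its models contain isomorphic copies of one fixed $\mathcal{L}^{\lambda_0}_\be$-structure $K_0$. Then any two models $M,N$ satisfy the same quantifier-free sentences with parameters from $K_0$, hence, by quantifier elimination, the same sentences. So the whole task is to exhibit such a $K_0$.

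The natural candidate is the prime field $\mathbb{F}$ of $\ka$, or $\ka$ itself, with the ``zero'' operator $\iota$. Here we use that $\ka$ is perfect. I would take $K_0=\ka$, viewed inside any model $(K,\partial)\models\be_\varphi\operatorname{CF}$ via the structure map. Since $\partial$ is a $\ka$-algebra map, $\partial(c)=\iota_K(c)$ for every $c\in\ka$. Hence the $\be$-operator restricted to $\ka$ is $\iota_\ka$, so $\ka\subseteq K^\partial$, and $(\ka,\iota_\ka)$ satisfies $\varphi$ by item (2) of Definition \ref{def: phi}.

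Next I check that $\ka$ is closed under $\lambda_0$ and that $\lambda_0$ is computed the same way in every model. By Lemma \ref{extension by pth roots}, $K$ is strict, i.e. $K^\partial=K^p$. As $\ka$ is perfect, every $c\in\ka$ has $c^{1/p}\in\ka\subseteq K$, and $\lambda_0(c)$ is this $p$-th root. Thus $\lambda_0$ on $\ka$ is the inverse Frobenius of $\ka$, independently of $K$. Note that $\lambda_0(c)\ne 0$ for $c\ne 0$, since $c\in K^\partial=K^p$. In characteristic zero $\lambda_0$ is presumably trivial or the identity, and there is nothing to check. So $(\ka,\iota_\ka,\lambda_0)$ is an $\mathcal{L}^{\lambda_0}_\be$-substructure of every model, with the same isomorphism type each time. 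If constants for elements of $\ka$ are not in the language, I would use the prime field instead, which is perfect as well.

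The main obstacle is pinning down exactly how $\lambda_0$ is defined, presumably as the $p$-th root on $K^p$ and $0$ elsewhere. One must confirm that its values on $K_0$ are determined by $K_0$ alone, which is where perfectness of $\ka$ is essential. Once that is checked, quantifier elimination finishes the proof: for any sentence $\sigma$, Theorem \ref{qe} gives a quantifier-free sentence $\sigma'$ that is equivalent to $\sigma$ modulo the theory, and the truth of $\sigma'$ is decided in $K_0$.
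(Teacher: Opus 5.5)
Your proposal is correct and is exactly the paper's argument. The paper's one-line proof observes that $(\ka,\iota_\ka)$ is a common $\mathcal{L}^{\lambda_0}_\be$-substructure of all models of $\be_\varphi\operatorname{CF}$ and then applies the quantifier elimination of Theorem \ref{qe}; your checks that $\ka$ is closed under the $\partial_j$ (via $\partial|_\ka=\iota_\ka$) and under $\lambda_0$ (via perfectness) are what its ``by assumption'' leaves implicit. The fallback to the prime field is unnecessary and would need care, since $\partial(1)=\iota(1)=1_\be$ may have coordinates in $\ka$ outside the prime field, so keep $\ka$ itself as in your main line.
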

\begin{proof}
By assumption $\left( \ka, \iota_\ka \right)$ is a common $\mathcal{L}^{\lambda_0}_\mathcal{B}$-substructure of all models of $\be_\varphi\operatorname{CF}$, so we are done Theorem \ref{qe}.
\end{proof}

We work with trivial $\varphi$ from now on. Let $\mathfrak{C}\models \bcf$ be a monster model. Considered as a separably closed field e.g. we use the notation $\acl^{\scf}$. On the other hand, $\acl^{\be}$ corresponds to the algebraic closure computed in the $\mathcal{B}$-field $(\mathfrak{C},\partial)$. The proof of the next result is verbatim the same as the proof of \cite[Lemma 4.14]{BHKK}, which deals with the case of $B$-operators.
\begin{lemma}\label{acl}
For any small subset $A$ of $\mathfrak{C}$, we have:
$$\acl^{\be}(A)=\acl^{\scf}(\langle A\rangle_\mathcal{B}), \  \dcl^{\be}(A)=\dcl^{\scf}(\langle A\rangle_\mathcal{B})$$
where $\langle A\rangle_\mathcal{B}$ denotes the $\mathcal{B}$-subfield of $\mathfrak{C}$ generated by $A$.
\end{lemma}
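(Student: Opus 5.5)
We prove both inclusions for $\acl$; the argument for $\dcl$ is identical, with ``finite orbit'' replaced by ``fixed by all automorphisms''. Write $K:=\acl^{\scf}(\langle A\rangle_\be)$. The inclusion $\supseteq$ is easy. First, $\langle A\rangle_\be\subseteq\dcl^{\be}(A)$, because the $\be$-subfield generated by $A$ is obtained by closing under $\partial_1,\dotsc,\partial_e$, the field operations and $\lambda_0$. Second, every $\mathcal{L}_\be$-automorphism of $\mathfrak{C}$ fixing $A$ is in particular a field automorphism fixing $\langle A\rangle_\be$. Any $\mathcal{L}_\be$-automorphism of $\mathfrak{C}$ preserves $\lambda_0$ (which is $\mathcal{L}_\be$-definable via $K^\partial=K^p$ and strictness), so it is also an automorphism of $\mathfrak{C}$ as a separably closed field in the $\lambda$-language. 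Hence anything in $\acl^{\scf}(\langle A\rangle_\be)$ has finitely many $\mathcal{L}_\be$-conjugates over $A$. (Note that $\mathfrak{C}$ is separably closed: by Theorem \ref{mainThm2} it is existentially closed in all its separable algebraic extensions, which carry unique $\be$-structures.)

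For $\subseteq$, we first observe that $K$ is a $\be$-subfield of $\mathfrak{C}$. Indeed, $\langle A\rangle_\be$ is closed under $\lambda_0$, so by Lemma \ref{lambda_0 => separable} $\mathfrak{C}$ is separable over it. Relative separable algebraic closure is étale, so $\partial$ restricts uniquely to it. Passing to the relative $\scf$-algebraic closure only adjoins $p$-independence-compatible elements, and the unique-extension argument from Lemma \ref{lemma: Blocus exist} again shows $K$ is closed under $\partial$ and $\lambda_0$. Thus $K$ is an $\mathcal{L}^{\lambda_0}_\be$-substructure. Now take $c\notin K$. We want infinitely many realizations of $\tp^{\be}(c/K)$. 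By quantifier elimination (Theorem \ref{qe}), it suffices to produce infinitely many $\mathcal{L}^{\lambda_0}_\be$-substructures $\langle Kc_i\rangle$ that are isomorphic over $K$ to $\langle Kc\rangle=:M$ and have pairwise distinct $c_i$.

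To build them we use amalgamation. Since $K$ is $\scf$-algebraically closed in $\mathfrak{C}$ and $M$ is separable over $K$, the extension $K\subseteq M$ is regular. Thus the free amalgam $M\otimes_K M$ from the proof of Proposition \ref{amal} is a domain, and its fraction field carries a $\be$-structure that is separable over both copies. Iterating gives a $\be$-field $F$ containing $\omega$ copies $M_i$ of $M$ over $K$ that are algebraically independent over $K$. Since $c\notin K^{\alg}\cap M$, the copies $c_i$ are pairwise distinct. Since $K\subseteq F$ is separable and $\mathfrak{C}$ is saturated and existentially closed, Theorem \ref{qe} lets us embed $F$ over $K$ into $\mathfrak{C}$. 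This yields infinitely many conjugates of $c$ over $K$, hence over $A$, so $c\notin\acl^{\be}(A)$. For $\dcl$, two copies suffice when $c\notin\dcl^{\scf}(K)$; one uses the same amalgam but with a nontrivial $\scf$-conjugate of $c$ over $\langle A\rangle_\be$ in place of the second copy.

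The main obstacle is checking that the amalgam and its embedding are separable, so that $\lambda_0$ is preserved. This is exactly what Lemma \ref{lambda_0 => separable} and Lemma \ref{lindisj} provide, and it is where strictness (Lemma \ref{extension by pth roots}) enters. A second point needing care is that $c$ and its conjugates remain distinct after the embedding. I would settle this by using linear disjointness of the copies over $K$.
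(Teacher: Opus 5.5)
Your argument for $\acl$ is correct. It uses the same ingredients as the paper's proof: the fraction field of a tensor product over the regular base $E=\acl^{\scf}(\langle A\rangle_\be)$ (your $K$), the unique $\be$-structure on that fraction field, and quantifier elimination in $\mathcal{L}^{\lambda_0}_\be$. The organization differs, though.

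The paper argues by contradiction:
\begin{itemize}
\item it takes $d\in\acl^{\be}(E)\setminus E$ and an elementary submodel containing $E$ and the finite $\aut^{\be}(\mathfrak{C}/E)$-orbit of $d$;
\item it moves that model by an $\scf$-automorphism $f$ over $E$ to a copy algebraically disjoint from it over $E$, and amalgamates;
\item it extends $f$ to a $\be$-automorphism by QE, so $f(d)$ is a conjugate of $d$ outside the model.
\end{itemize}
You instead build $\omega$ free copies of $\langle Kc\rangle$ abstractly and embed them by saturation, exhibiting infinitely many realizations of $\tp^{\be}(c/K)$ directly. This avoids the auxiliary model and the extension of $f$, at the cost of an iterated amalgam.

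Your write-up also makes explicit two points the paper leaves tacit. First, $E$ is itself a $\be$-subfield; the paper needs this for $\partial$ and $\partial^f$ to agree on $E$. Second, all the fields involved are strict, so Lemma \ref{lindisj} makes the relevant extensions separable and $\lambda_0$ is respected by the embeddings. Reading $\langle A\rangle_\be$ as closed under $\lambda_0$ is also the right choice. Without it, for $A=\{y^p\}$ the coordinates $\partial_i(y)$ lie in $\dcl^{\be}(A)$ but need not lie in $\acl^{\scf}$ of the $\be$-field generated by $y^p$. The worry about $c_i$ staying distinct after embedding is moot, since embeddings are injective.

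Your $\dcl$ sentence needs correcting. With $K=\acl^{\scf}(\langle A\rangle_\be)$, the condition $c\notin\dcl^{\scf}(K)$ just says $c\notin K$, which the $\acl$ part already handles. The case that matters is $c\in K\setminus\langle A\rangle_\be$; note that $\dcl^{\scf}(\langle A\rangle_\be)=\langle A\rangle_\be$ because $\mathfrak{C}$ is separable over it.

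In that case no amalgam is available: $\langle A\rangle_\be$ is not relatively algebraically closed, so the tensor square of $\langle A\rangle_\be(c)$ over it is not a domain. None is needed either. Take a Galois conjugate $c'\neq c$ over $\langle A\rangle_\be$. The field isomorphism $\langle A\rangle_\be(c)\to\langle A\rangle_\be(c')$ over $\langle A\rangle_\be$ is automatically a $\be$-isomorphism, by the same étale uniqueness you used to show $K$ is a $\be$-subfield. It also respects $\lambda_0$, since $\mathfrak{C}$ is separable over both fields. QE then gives $c'\models\tp^{\be}(c/A)$.

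The paper itself only says the $\dcl$ case is ``almost the same,'' so this is a matter of stating the right step rather than a missing idea.
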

\begin{proof}
We will prove the claim about $\acl^\be$, the proof of the claim about $\dcl^{\be}$ being almost the same. We need to show that $E:=\acl^{\scf}(\langle A\rangle_\be)$ is $\mathcal{B}$-algebraically closed.
Assume not, and take $d\in\acl^{\be}(E)\setminus E$. Let $(K,\partial)\prec(\mathfrak{C},\partial)$ be such that $E\subseteq K$ and such that $K$ also contains the (finite) orbit of $d$ under the action of $\aut^{\be}(\mathfrak{C}/E)$.
There is $f\in\aut^{\scf}(\mathfrak{C}/E)$ such that $f(K)$ is algebraically disjoint from $K$ over $E$. If $f(d)\in K$, then $d\in E$ (since $E$ is $\operatorname{SCF}$-algebraically closed). Therefore $f(d)\not\in K$. Let us denote by $\partial^f$ the $\mathcal{B}$-operator on $f(K)$ which is the transport of $\partial\colon K\to\be\left( K \right)$ via $f$. Arguing as in Theorem \ref{qe}, we get that $Kf(K)\cong \left( f(K)\otimes_E K \right)_0$ and that there is a unique $\mathcal{B}$-operator on this field extending $\partial, \partial^f$. This field of fractions can be embedded (as a $\mathcal{B}$-field) over $K$ into $\mathfrak{C}$. Hence, we can assume that $f(K)$ is an elementary substructure of $(\mathfrak{C},\partial)$ and $f$ is a $\mathcal{B}$-isomorphism. Therefore, $f$ extends to an element of $\aut^{\be}(\mathfrak{C}/E)$. But then we get
$$f(d)\in \left(\aut^{\be}(\mathfrak{C}/E)\cdot d\right)\setminus K,$$
which is a contradiction.
\end{proof}

Using \cite[Theorem 5.8]{fix} one can prove the following
\begin{theorem}\label{thm: stable}
The theory $\bcf$ is stable and not superstable.
\end{theorem}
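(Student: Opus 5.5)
Under the standing assumption $\neat$ with trivial $\varphi$, I will prove that $\bcf$ is stable but not superstable in the style of \cite[Theorem 5.8]{fix}. That result transfers stability from a stable base theory to an expansion whose definable/algebraic closure and type structure are controlled by the base theory. Here the base theory is $\operatorname{SCF}$, in the language with $\lambda$-functions or at least $\lambda_0$, which is stable and not superstable when the imperfection degree is positive.

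\textbf{Step 1: describe types.} By Theorem \ref{qe}, $\bcf$ has quantifier elimination in $\mathcal{L}^{\lambda_0}_\be$. Hence the type of a tuple $a$ over a small model $K\preceq\mathfrak{C}$ is determined by the quantifier-free type of the $\be$-field $\langle Ka\rangle_\be$ generated by $Ka$, together with its $\lambda_0$-structure. By Lemma \ref{lambda_0 => separable}, the extension $K\subseteq\langle Ka\rangle_\be$ is separable. Since $\partial$ has finitely many coordinates, $\langle Ka\rangle_\be$ is generated as a field by the countable sequence of iterated coordinates $\partial_{i_1}\cdots\partial_{i_k}(a)$, and $\lambda_0$ on this field is computed from $p$-independence data. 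So $\tp(a/K)$ is determined by $\tp^{\scf}\big((\partial_{\bar\imath}(a))_{\bar\imath}/K\big)$, where $\bar\imath$ ranges over finite sequences of indices. This is exactly the hypothesis of \cite[Theorem 5.8]{fix} on the $\aleph_0$-tuple map $a\mapsto(\partial_{\bar\imath}(a))_{\bar\imath}$.

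\textbf{Step 2: count types.} It follows that for $|K|=\kappa$ the number of types over $K$ is at most the number of $\operatorname{SCF}$-types of countable tuples over $K$, which is at most $\kappa^{\aleph_0}$ because $\operatorname{SCF}$ is stable. Thus $\bcf$ is stable in every $\kappa$ with $\kappa^{\aleph_0}=\kappa$, and in particular stable. Lemma \ref{acl} supplies the compatibility of $\acl$ with $\operatorname{SCF}$ that the cited theorem needs. The main obstacle is matching the precise hypotheses of \cite[Theorem 5.8]{fix}: quantifier elimination relative to the base, and a $\be$-structure that is determined uniquely once one is given a separable amalgam. I will check the latter via the uniqueness of the $\be$-operator on $(M\otimes_E N)_0$, as in Proposition \ref{amal} and Lemma \ref{acl}. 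In characteristic zero, where $\lambda_0$ is trivial, the same argument runs with $\operatorname{ACF}_0$ in place of $\operatorname{SCF}$.

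\textbf{Step 3: non-superstability.} I will exhibit an infinite strictly descending chain of definable subgroups, each of infinite index in the previous one. Take the additive subgroups $G_0=\mathfrak{C}$ and $G_{n+1}=\{x\in G_n: \partial_{j}(x)\in G_n\ \text{for all }j\}$, or more simply the iterated constant fields $\mathfrak{C}\supsetneq \mathfrak{C}^\partial=\mathfrak{C}^p\supsetneq\mathfrak{C}^{p^2}\supsetneq\cdots$. Each $\mathfrak{C}^{p^{n+1}}$ is definable, being the image of the definable Frobenius, and has infinite index in $\mathfrak{C}^{p^n}$, since $[\mathfrak{C}:\mathfrak{C}^p]$ is infinite. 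That index is infinite because $\mathfrak{C}$ contains transcendental constants-free elements: by Theorem \ref{mainThm2} there are $\be$-points witnessing infinitely many $p$-independent elements. This gives $\mathfrak{C}$ infinite imperfection degree and hence an infinite chain of definable subgroups of infinite index, so $\bcf$ is not superstable. In characteristic zero I will instead use the chain $\ker\partial^{(n)}$ of iterated kernels of a coordinate derivation-like map, and establish infinite index by the density of $\be$-points given by Proposition \ref{ecfin <=> points}.
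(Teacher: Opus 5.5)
\paragraph{The gap is in Step 1.} The paper gives no argument of its own here; it only points to \cite[Theorem 5.8]{fix}. Your sketch therefore has to stand on its own, and its key step fails.

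In Step 1 you assert that $\tp(a/K)$ is determined by $\tp^{\scf}\big((\partial_{\bar\imath}(a))_{\bar\imath}/K\big)$. This is false. The $\mathcal{L}^{\lambda_0}_\be$-structure generated by $Ka$ is in general strictly bigger than $\langle Ka\rangle_\be$. That field is separable over $K$, but $\langle Ka\rangle_\be\subseteq\mathfrak{C}$ need not be separable. So $\lambda_0$ adjoins $p$-th roots of its constants, and the $\partial$-values of those roots are new data that no $\operatorname{SCF}$-type of the iterates of $a$ records.

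For instance, if $a=z^p$ then $\partial(a)=\iota(a)$, so your tuple consists of $a$ and zeros. Meanwhile $\tp(a/K)$ contains $\tp^{\be}(z/K)$, since $z=\lambda_0(a)$. A concrete counterexample in $\operatorname{DCF}_p$:
\begin{itemize}
\item Take a constant $c$ transcendental over $K$ and $z\neq 0$ with $\delta z=cz$.
\item Then $z\notin\mathfrak{C}^pK$. Otherwise, using $\mathfrak{C}^pK\cong\mathfrak{C}^p\otimes_{K^p}K$ with $\delta=\id\otimes\delta_K$, $c$ would be an eigenvalue of a square matrix over $K^p$, hence algebraic over $K$.
\item So $z$ and $z+1$ both realize the generic $\operatorname{SCF}$-type over $K$ (an element $p$-independent over $K$).
\item Yet $z$ satisfies $\delta(\delta x/x)=0$ and $z+1$ does not.
\end{itemize}
Thus $z^p$ and $(z+1)^p$ agree on everything your Step 1 records but have different types, and the count in Step 2 is unsupported as written.

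\paragraph{How to repair the count.} Replace $(\partial_{\bar\imath}(a))_{\bar\imath}$ by a countable enumeration, starting with $a$, of the smallest $\be$-subfield $L_a\supseteq Ka$ of $\mathfrak{C}$ that is separable in $\mathfrak{C}$. This is the $\mathcal{L}^{\lambda_0}_\be$-substructure generated by $Ka$.
\begin{itemize}
\item Build $L_a$ by $\omega$ alternations of $\be$-closure and adjoining $p$-th roots of elements lying in $\mathfrak{C}^p$. Using a $p$-basis of $K$, which stays $p$-independent in $\mathfrak{C}$, each step adds only countably many generators.
\item On a subfield that is separable in $\mathfrak{C}$, $\lambda_0$ is determined by the field structure. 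So any $\be$-isomorphism $L_a\to L_b$ over $K$ with $a\mapsto b$ preserves $\lambda_0$, and Theorem \ref{qe} gives $\tp(a/K)=\tp(b/K)$.
\item Hence $\tp(a/K)$ is determined by the quantifier-free $\mathcal{L}_\be$-type over $K$ of an $\omega$-tuple. There are at most $|K|^{\aleph_0}$ of these, which gives stability.
\end{itemize}

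\paragraph{Problems in Step 3.} The chain $\mathfrak{C}^{p^n}$ needs $[\mathfrak{C}:\mathfrak{C}^p]$ to be infinite, and Theorem \ref{mainThm2} does not give this in the way you suggest. A correct argument runs as follows.
\begin{itemize}
\item Suppose a model $M$ had a finite $p$-basis $\bar b$.
\item Embed a $\be$-transcendental extension $M\langle X\rangle_\be$ (iterated coordinates of $X$ algebraically independent) into a model $N$.
\item By model completeness $\bar b$ is still a $p$-basis of $N$, so $X=\sum_\alpha c_\alpha\bar b^\alpha$ with $c_\alpha\in N^p=N^\partial$.
\item Then $M\langle X\rangle_\be\subseteq M(c_\alpha)_\alpha$ would have finite transcendence degree over $M$, a contradiction.
\end{itemize}
Alternatively, superstability passes to the field reduct, an infinite superstable field is algebraically closed (Cherlin--Shelah), and models of $\bcf$ are imperfect. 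Either route needs $p>0$ and $\be$ nontrivial.

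Your characteristic-zero remarks should be dropped. There the analogous statement fails: for example $\operatorname{DCF}_0$ is $\omega$-stable. So no descending chain of definable subgroups of infinite index can exist, and your chain of iterated kernels is in any case ascending.
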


\begin{lemma}\label{lemma: stationary}
Any complete type over an algebraically closed set is $\ind^{\be}$-stationary
\end{lemma}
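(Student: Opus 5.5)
We aim to show that if $E=\acl^{\be}(E)$ is small and $a, a'$ realize the same complete type $p$ over $E$, both $\ind^{\be}$-independent from a set $B\supseteq E$, then $\tp(a/B)=\tp(a'/B)$. Here $\ind^{\be}$ is the independence relation of the stable theory $\bcf$ (Theorem \ref{thm: stable}). By Lemma \ref{acl} it coincides with algebraic independence of the generated $\be$-subfields, i.e. $a\ind^{\be}_E B$ iff $\langle Ea\rangle_\be$ is algebraically disjoint from $\langle B\rangle_\be$ over $E$. Replacing $B$ by $\acl^{\be}(B)$, we may assume $B$ is also $\be$-algebraically closed. Then $E$ and $B$ are separably closed $\be$-subfields, and by Lemma \ref{lambda_0 => separable} all extensions in play are separable.

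Put $M=\acl^{\be}(Ea)$ and $M'=\acl^{\be}(Ea')$, and fix an $E$-automorphism $\sigma$ of $\mathfrak{C}$ with $\sigma(a)=a'$, so $\sigma(M)=M'$. Since $E$ is separably closed and $M$ is separable over $E$, the extension $E\subseteq M$ is regular, and likewise for $E\subseteq B$. Algebraic disjointness combined with regularity gives linear disjointness, so the compositum $MB$ is isomorphic to the fraction field of $M\otimes_E B$, and similarly $M'B\cong \operatorname{Frac}(M'\otimes_E B)$. The $E$-isomorphism $\sigma|_M\colon M\to M'$ together with $\id_B$ therefore induces a field isomorphism $\tau\colon MB\to M'B$.

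Next we check that $\tau$ respects the $\be$-operators. As in the proof of Proposition \ref{amal}, the field of fractions of $M\otimes_E B$ carries a \emph{unique} $\be$-operator extending those on $M$ and $B$, because $\be$ is local and the extensions are separable (étale-type uniqueness). Both the restriction of $\partial$ to $MB$ and the transport of $\partial|_{M'B}$ along $\tau$ extend these same operators, so they coincide, and $\tau$ is a $\be$-isomorphism. Moreover $MB\subseteq\mathfrak{C}$ is separable, being a $\lambda_0$-substructure by the argument in Lemma \ref{lambda_0 => separable}. Hence $\tau$ is an $\mathcal{L}_\be^{\lambda_0}$-isomorphism between substructures.

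By quantifier elimination (Theorem \ref{qe}), $\tau$ is partial elementary, so $\tp(a/B)=\tp(a'/B)$.

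The main obstacle is the identification of $\ind^{\be}$ with algebraic independence of $\be$-closures, together with the verification that $\lambda_0$ is preserved, i.e. that $MB$ is separable over $B$ and inside $\mathfrak{C}$. For the latter we would use the linear disjointness criterion of Lemma \ref{lindisj}, applied to the strict field $\mathfrak{C}$.
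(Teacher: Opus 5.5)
Your route is the paper's route. You use regularity of $M$ and $B$ over the separably closed $E$ to get linear disjointness, build the isomorphism $\tau\colon MB\to M'B$ from $\sigma|_M$ and $\id_B$ (automatically a $\be$-isomorphism), and finish with quantifier elimination in $\mathcal{L}^{\lambda_0}_\be$ (Theorem \ref{qe}). There is a genuine gap, at exactly the step you flag: that $MB$ is separable in $\mathfrak{C}$, i.e.\ closed under $\lambda_0$. Without this, $\tau$ is not an isomorphism of $\mathcal{L}^{\lambda_0}_\be$-substructures and Theorem \ref{qe} does not apply. Your justification is circular. The lemma you cite derives separability \emph{from} $\lambda_0$-closedness, and Lemma \ref{lindisj} (with $\mathfrak{C}$ as the big field) needs $MB$ to be strict, which is precisely what is in question. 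Linear disjointness of two separable extensions does not give it either. For pure fields, $E(x)$ and $E(x+y^p)$ with $x,y$ $p$-independent have compositum $E(x,y^p)$, which is not separable in $\mathfrak{C}$. The paper's proof makes the same inference from linear disjointness, so you cannot borrow the step from there.

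The step is in fact false, and under your reading of $\ind^{\be}$ the statement is false too. Take $\ka=\mathbb{F}_p$ and $\be=\mathbb{F}_p[\varepsilon]_\otimes$, so $\bcf=\operatorname{DCF}_p$ with derivation $\delta$. Let $E=\acl^{\be}(\emptyset)=\mathbb{F}_p^{\alg}$, and let $B\prec\mathfrak{C}$ be small with some $y\in B$ satisfying $\delta y=1$. Pick $z_1,z_2$ transcendental over $B$ with $\delta z_1=0$ and $\delta z_2=1$, and put $x_i=y+z_i^p$.
\begin{itemize}
\item Then $\delta x_i=1$, and the constants of $E(x_i)$ are $E(x_i^p)=E(x_i)^p$. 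So $E(x_i)$ is the $\mathcal{L}^{\lambda_0}_\be$-substructure generated by $Ex_i$, and $\tp(x_1/E)=\tp(x_2/E)$ by Theorem \ref{qe}.
\item $\acl^{\be}(Ex_i)=E(x_i)^{\operatorname{sep}}$ is algebraically disjoint from $B$ over $E$.
\item Yet $x_1-y=z_1^p$ is a $p^2$-th power, since $z_1\in\mathfrak{C}^\delta=\mathfrak{C}^p$, while $x_2-y$ is not, since $z_2\notin\mathfrak{C}^p$.
\end{itemize}
Moreover, take \emph{every} $x$ with $\delta x=1$ that is transcendental over $B$; this includes any realization of $\tp(x_1/E)$ forking-independent from $B$. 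For such $x$, the constant $x-y$ lies in $B(x)\cap\mathfrak{C}^p$ but not in $B(x)^p$. So $\acl^{\be}(Ex)B$, being separable algebraic over $B(x)$, is never separable in $\mathfrak{C}$.

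Hence Lemma \ref{acl}, which only computes $\acl^{\be}$ and $\dcl^{\be}$, cannot justify identifying $\ind^{\be}$ with algebraic disjointness. A correct proof must control the whole $\lambda_0$-closure of $MB$, that is, $p$-th roots such as $\lambda_0(x-y)$ and everything they generate. This requires a finer description of independence, and $\tau$ must be extended to that closure before quantifier elimination can be invoked.
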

\begin{proof}
Let $M \prec \mathfrak{C}$ be a small model, $K$ a small set with $\acl^\be\left( K \right) = K$ and let $a,b\in \mathfrak{C}$ are such that
$$\tp^\be\left( a/ K \right) = \tp^\be\left( b/ K \right), \quad a\ind^\be_{K} M \quad\mbox{ and } \quad b\ind^\be_{K} M.$$
We want to show that also $\tp^\be\left( a/ M \right) = \tp^\be\left( b/ M \right)$.

Let $f\in \aut^\be\left( \mathfrak{C} / K \right)$ be such that $f\left( a \right)$. We see that $f$ constitutes a $\be$-isomorphism between the $\be$-fields $K_a:= \dcl^\be\left( Ka \right)$ and $K_b:= \dcl^\be\left( Kb \right)$. Since $K$ is algebraically closed, the field extension $K\subseteq M$ is regular, so we can do the same juggling as in the proof of Lemma \ref{acl} to arrive at the following: $K_a$ and $M$ are linearly disjoint over $K$ (and the same for $K_b$), $K_a\otimes M$ and $K_b\otimes M$ are $\be$-domains and $f$ lifts naturally to an isomorphism between their fractions fields, i.e to an $\be$-automorphism $\widetilde{f}\colon K_aM\to K_b M$ which agrees with $f$ on $K_a$ (in particular $\tilde{f} \left( a \right) = b$) and is the identity on $M$. It is now enough to extend $\widetilde{f}$ to an automorphism of $\mathfrak{C}$.

Since $K, K_a, K_b$ and $M$ are definably closed, all the extensions $K\subseteq K_a, K_b, M \subseteq\mathfrak{C}$ are separable, so by the linear disjointedness of $K_a$ and $M$ over $K$ (and the same for $K_b$) we get that the extensions $K_aM, K_bM\subseteq \mathfrak{C}$ are separable, in particular they are $\mathcal{L}_\be^{\lambda_0}$ substructures of $\mathfrak{C}$, thus by quantifier elimination (Theorem \ref{qe}) $\widetilde{f}\colon K_aM\to K_b M$ extends to an automorphism of $\mathfrak{C}$, as desired.
\end{proof}

\subsection{PAC structures in the context of \texorpdfstring{$\be$}{B}-fields} \label{sec: PAC}
\textit{Pseudo algebraically closed} (PAC) fields were introduced by Ax in his seminal work \cite{Ax} on pseudofinite fields. Later on Hrushovski in \cite{HrPAC} considered PAC structures in a more general model-theoretic context, more precisely he considered \textit{pseudo algebraically substructures} of strongly minimal structures. In \cite{PillayPAC} Pillay and Polkowska considered PAC substructure in the general context of stable theories and so did recently Hoffmann and Kowalski in \cite{PAC}.\footnote{There is a lot more work on PAC structures and people we should mention, see the introduction to \cite{PAC}.} We will base on the latter work in this section. Let $T$ is be a stable theory and let $\mathfrak{C}$ be a monster model of $T$. 

\begin{definition}[Definition 2.3. in \cite{PAC}]\label{def: PAC}
We say that a small substructure $F\subset \mathfrak{C}$ is \textbf{pseudo algebraically closed} (or $T$-$\operatorname{PAC}$) if and every stationary type over $F$ is finitely satisfiable in $F$.
\end{definition}

For $T=\operatorname{ACF}$ it is an easy exercise to prove that pseudo algebraically closed substructures are precisely perfect pseudo algebraically closed fields. Moreover, in $\operatorname{ACF}$ stationary types correspond to absolutely irreducible varieties, so Definition \ref{def: PAC} generalizes the ``geometric'' definition of being pseudo algebraically closed, i.e. ``PAC = absolutely irreducible varieties have points''. The point is that one can view this notion also as a variant of existential closedness, as explained below in Fact \ref{fact: regcl}.

\begin{definition}[Definition 2.3. in \cite{PAC}]\label{def: reg}
We say that an extension of small substructures $F\subseteq K$ is \textbf{regular} if $\dcl\left( K \right)\cap \acl\left( F \right) = F$.
\end{definition}

Note that in particular, if the extension $F\subseteq K$ is regular then $F$ is definably closed.

\begin{example}\label{example: SCF-PAC}
If $L=\mathcal{L}_{\operatorname{rng}}^\lambda$ and $T=\scf_{p,e}$ (where $e$ is finite or not), then a regular extension in the above sense is the same as regular extension in the field theoretic sense (see Fact 4.17 and Fact 4.20 in \cite{PAC}).
\end{example}

\begin{fact}[Remark 2.17 in \cite{PAC}]\label{fact: regcl}
Assume the following conditions hold:
\begin{enumerate}
    \item $T$ has quantifier elimination.
    \item $T$ codes finite tuples (i.e. eliminates finite imaginaries).
    \item $T$ is stable and types over algebraically closed sets are stationary.
\end{enumerate}
Then, a small subset $F$ is $T$-$\operatorname{PAC}$ if and only if $F=\dcl ( F )$ and $F$ is \textbf{regularly closed}, i.e. existentially closed in every regular extension.
\end{fact}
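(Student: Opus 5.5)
We have to prove Fact \ref{fact: regcl}: under (1)–(3), $F$ is $T$-PAC if and only if $F=\dcl(F)$ and $F$ is existentially closed in every regular extension.

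($\Rightarrow$) Assume $F$ is $T$-PAC. To get $F=\dcl(F)$, take $c\in\dcl(F)$. Then $\tp(c/F)$ is algebraic and realised only by $c$, so it is stationary. By PAC it is finitely satisfiable in $F$. The formula $x=c$, which is over $F$ since $c\in \dcl(F)$, therefore has a solution in $F$, i.e. $c\in F$.

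Now let $F\subseteq K$ be regular and let $\phi(x)$ be quantifier-free over $F$ with $K\models\phi(a)$. By (1) it suffices to treat such $\phi$. Work in $\mathfrak C$ with $K$ small, and consider $p=\tp(a/F)$.

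The key step is to show that $p$ is stationary. First, regularity gives $\dcl(Fa)\cap\acl(F)\subseteq \dcl(K)\cap\acl(F)=F$. Then use (3): $\tp(a/\acl(F))$ is stationary. It remains to show $p$ has a unique extension to $\acl(F)$.
\begin{itemize}
\item Take any $a'\models p$. Since $F=\dcl(F)$, the (finite) imaginary coding the finite set of $\aut(\mathfrak C/F)$-conjugates of $\tp(a/\acl(F))$ lies in $\dcl(Fa)$.
\item By (2), real tuples code finite sets, so this code is a real tuple in $\acl(F)\cap\dcl(Fa)=F$.
\item Hence the $\aut(\mathfrak C/F)$-orbit of $\tp(a/\acl(F))$ is a singleton, so $p$ has a unique extension to $\acl(F)$.
\end{itemize}
Combined with stationarity over $\acl(F)$, this makes $p$ stationary.

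By PAC, $p$ is finitely satisfiable in $F$. Since $\phi\in p$, $\phi$ has a solution in $F$, so $F$ is existentially closed in $K$.

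($\Leftarrow$) Assume $F=\dcl(F)$ and $F$ is regularly closed. Let $p\in S(F)$ be stationary, let $\phi\in p$, and let $a\models p$. We want to show that $\dcl(Fa)\cap\acl(F)=F$, so that $F\subseteq \dcl(Fa)$ is regular. Suppose $c\in\dcl(Fa)\cap\acl(F)$, say $c=g(a)$ with $g$ an $F$-definable function.
\begin{itemize}
\item If $c$ had a conjugate $c'\ne c$ over $F$, pick $\sigma\in\aut(\mathfrak C/F)$ with $\sigma(c)=c'$. Then $\tp(a/Fc)$ and $\tp(\sigma a/Fc)$ are two extensions of $p$ to $\acl(F)$ that disagree on the formula $g(x)=c$.
\item This contradicts stationarity, since a stationary type has a unique extension to $\acl(F)$.
\item So $c\in\dcl(F)=F$.
\end{itemize}
Using (1), write $\phi$ quantifier-free. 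Regular closedness then gives a realization of $\phi$ in $F$, so $p$ is finitely satisfiable in $F$.

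The main obstacle is the stationarity argument in ($\Rightarrow$). There one must correctly use the coding of finite sets to push the code of the conjugacy class of $\tp(a/\acl(F))$ into $\acl(F)\cap\dcl(Fa)$, which is where hypotheses (2) and (3) enter.
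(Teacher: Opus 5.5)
The paper gives no proof of this Fact; it cites \cite[Remark 2.17]{PAC}. Your overall plan is the natural one: stationarity of $\tp(a/F)$ should correspond to regularity of $F\subseteq\dcl(Fa)$, and quantifier elimination turns finite satisfiability into existential closedness. Your proof that PAC implies $F=\dcl(F)$ is fine, once ``$x=c$'' is replaced by an $\mathcal{L}(F)$-formula defining $\{c\}$. The whole ($\Leftarrow$) direction is also fine.

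\textbf{The gap} is exactly the step you flag as the main obstacle. You propose to code ``the finite set of $\aut(\mathfrak{C}/F)$-conjugates of $\tp(a/\acl(F))$.'' That set is simply the set of all extensions of $p$ to $\acl(F)$. It is $\aut(\mathfrak{C}/F)$-invariant, so any code of it lies over $F$ for trivial reasons, and knowing the code lies in $F$ says nothing about the size of the set. So ``hence the orbit is a singleton'' is a non sequitur; note also that the $a'$ you introduce is never used. Moreover, you cannot assume this orbit is finite before proving anything. For a general type over $F$, the set of extensions to $\acl(F)$ can be infinite, so the ``finite imaginary'' you invoke need not exist.

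\textbf{The fix.} What must be coded is an object depending on $a$, and finiteness must be obtained one formula at a time. Fix an $\mathcal{L}(F)$-formula $\psi(x,y)$, a tuple $c\in\acl(F)$, and a finite $F$-definable set $D\ni c$. Put $S_a=\{c'\in D : \models\psi(a,c')\}$.
\begin{enumerate}
\item $S_a$ is a finite $Fa$-definable set, and all its $\aut(\mathfrak{C}/F)$-conjugates are subsets of $D$, so there are finitely many of them.
\item By (2), $S_a$ has a real code $d$, and $d\in\dcl(Fa)\cap\acl(F)=F$. Hence $S_a$ is $\aut(\mathfrak{C}/F)$-invariant.
\item For $a'\models p$, take $\sigma\in\aut(\mathfrak{C}/F)$ with $\sigma(a)=a'$. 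Then $S_{a'}=\sigma(S_a)=S_a$, so $\models\psi(a,c)\leftrightarrow\psi(a',c)$.
\end{enumerate}
Since $\psi$ and $c$ were arbitrary, $\tp(a'/\acl(F))=\tp(a/\acl(F))$, i.e.\ $p$ has a unique extension to $\acl(F)$. With this argument replacing your bullets, your appeal to (3) to conclude that $p$ is stationary goes through, and so does the rest of the ($\Rightarrow$) direction.
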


Specializing to $T=\operatorname{ACF}$ in Fact \ref{fact: regcl} we recover the well-known fact that PAC fields are exactly the fields which are existentially closed in any (field-theoretically) regular extension.

\begin{remark}\label{remark: fin eq}
Any theory of a field (with possibly some additional structure) codes finite tuples - given a tuple $\bar{a} = \left( a_1,\dotsc , a_n \right)$ the tuple $\left( s_1\left( \bar{a} \right) , \dotsc , s_n\left( \bar{a} \right) \right)$ is a code for $\bar{a}$, where $s_1,\dotsc , s_n$ are the elementary symmetric polynomials in $n$ variables.
\end{remark}

Let us apply the above notions in the case of $\be_\varphi$-fields. Fix a coordinate $\ka$-scheme $\be$ satisfying $\neat$ and an iterativity condition $\varphi$. From now on let $T$ be the theory $\be_\varphi\operatorname{CF}$ considered in the language $\mathcal{L}_\be^{\lambda_0}$ and let $\mathfrak{C}$ be a monster model of $T$. Note that $T$ meets the assumptions of Fact \ref{fact: regcl}:  The theory $T$ is stable by Theorem \ref{thm: stable}, stationary of types over algebraically closed sets is the content of Lemma \ref{lemma: stationary}, by the choice of the language it eliminates quantifiers thanks to Theorem \ref{qe} and Remark \ref{remark: fin eq} implies $T$ codes finite tuples.

Let us write $\bpac$ instead of $\bv\operatorname{CF}$-$\operatorname{PAC}$. We aim to prove that $\bpac$ is an elementary property. Fact \ref{fact: regcl} tells us that $\bpac$ is an instance of $\ce$-closedness for an appropriate $\ce$, so this fits perfectly into the set-up of Section \ref{sec: general}. The next few result technical lemmas will show this even more concretely, namely we will prove the following (see Lemma \ref{lemma: PAC final reduction}): a $\bv$-field $K$ is $\bpac$ if and only if $K$ is strict and $\ce_{\operatorname{reg}}$-closed where $\ce_{\operatorname{reg}}$ denotes the class of all $\bv$-field extensions which are regular as extensions of pure fields.

Until the end of this Section, all $\be_\varphi$-field considered are small subsets of $\mathfrak{C}$.


\begin{lemma}\label{lemma: reg is pure}
An extension of small $\be_\varphi$-fields $F\subseteq K$ is regular in the sense of Definition \ref{def: reg} if and only if $F\subseteq K$ is regular as a pure field extension.   
\end{lemma}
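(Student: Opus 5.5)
The natural route is to reduce the model-theoretic closure operators $\dcl$ and $\acl$ in $T=\bvcf$ (language $\mathcal{L}_\be^{\lambda_0}$) to their field-theoretic counterparts via Lemma \ref{acl}, and then use Example \ref{example: SCF-PAC}. By Lemma \ref{acl} (stated for trivial $\varphi$, but the argument uses only quantifier elimination, Theorem \ref{qe}, and the amalgamation construction, so it transfers verbatim to $\bv$-fields), for the $\be_\varphi$-fields $F$ and $K$ we get $\acl^\be(F)=\acl^{\scf}(F)$ and $\dcl^\be(K)=\dcl^{\scf}(K)$, since $F$ and $K$ are already $\be$-subfields. In $\mathfrak{C}$, a separably closed field with $\lambda$-functions available via $\lambda_0$, $\acl^{\scf}(F)$ is the separable closure $F^{\rm sep}$ (inside $\mathfrak{C}$) of the $\lambda$-closure of $F$. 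Likewise $\dcl^{\scf}(K)$ is the smallest subfield of $\mathfrak{C}$ containing $K$ over which $\mathfrak{C}$ is separable; concretely, it is obtained from $K$ by closing under $\lambda$-functions.

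The regularity condition of Definition \ref{def: reg} therefore reads $\dcl^{\scf}(K)\cap \acl^{\scf}(F)=F$, which is exactly $\scf$-regularity of $F\subseteq K$. By Example \ref{example: SCF-PAC}, this is equivalent to field-theoretic regularity, \emph{provided} $F$ and $K$ are themselves $\scf$-definably closed, i.e. $\mathfrak{C}$ is separable over them.

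The main obstacle is exactly this separability: a small $\bv$-field $F\subseteq\mathfrak{C}$ need not be closed under $\lambda_0$. I would handle the two directions separately.

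For ($\Rightarrow$): regularity in the sense of Definition \ref{def: reg} forces $F=\dcl(F)$, hence $F$ is an $\mathcal{L}_\be^{\lambda_0}$-substructure. By Lemma \ref{lambda_0 => separable} the extension $F\subseteq\mathfrak{C}$ is separable, so $F$ is $\scf$-closed. Then $\dcl^{\scf}(K)\supseteq K$ gives $K\cap F^{\rm sep}=F$, and separability of $K$ over $F$ follows from $F\subseteq \mathfrak{C}$ being separable. Together these give field-theoretic regularity.

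For ($\Leftarrow$): I would first argue that the statement intends $F,K$ to be substructures in $\mathcal{L}_\be^{\lambda_0}$, i.e. definably closed, so again $\mathfrak{C}$ is separable over both by Lemma \ref{lambda_0 => separable}. Then $\dcl(K)=K$, $\acl(F)=F^{\rm sep}$, and $K\cap F^{\rm sep}=F$ follows from regularity of $F\subseteq K$, since $F$ is relatively algebraically closed in $K$.

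I expect the main care to go into checking that Lemma \ref{acl} indeed transfers to $\bv$-fields. This needs Proposition \ref{amal}'s tensor-product amalgam to preserve $\varphi$, which holds by item (1) of Definition \ref{def: phi}.
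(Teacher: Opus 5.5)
Your proposal is correct and follows the paper's proof. That proof consists of the identity $\dcl^{\be}(K)\cap\acl^{\be}(F)=\dcl^{\scf}(K)\cap\acl^{\scf}(F)$ from Lemma \ref{acl}, followed by an appeal to Example \ref{example: SCF-PAC}. The extra step you supply is needed but left implicit in the paper: you use Lemma \ref{lambda_0 => separable} to see that $\mathfrak{C}$ is separable over $F$ (and over $K$ for the converse), so that Example \ref{example: SCF-PAC} really applies, and you read the statement for $\mathcal{L}_\be^{\lambda_0}$-substructures (as Definition \ref{def: reg} presupposes). Without that reading the statement fails: take $t\in\mathfrak{C}$ transcendental over $\ka$; then $F=K=\ka(t^p)$ is a $\be_\varphi$-subfield with trivial operator and is trivially regular over itself as a field, yet $t=\lambda_0(t^p)\in\dcl(F)\setminus F$.
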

\begin{proof}
By Lemma \ref{acl} we have
$$\dcl^{\be}\left( K \right)\cap \acl^{\be}\left( F \right) = \dcl^{\scf}\left( K \right)\cap \acl^{\scf}\left( F \right),$$
thus we are done by 
\ref{example: SCF-PAC}.
\end{proof}

\begin{lemma}\label{lemma: PAC final reduction}
Let $K$ be a $\be_\varphi$-field. Then $K$ is $\bpac$ if and only if $K$ is strict and $\ce_{\operatorname{reg}}$-closed.    
\end{lemma}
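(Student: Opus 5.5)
We combine Fact \ref{fact: regcl}, which applies to $T$ as noted above, with Lemma \ref{lemma: reg is pure}, Lemma \ref{lemma: reg} and Lemma \ref{lambda_0 => separable}. By Fact \ref{fact: regcl}, a small $\bv$-field $K\subseteq\mathfrak{C}$ is $\bpac$ if and only if $K=\dcl(K)$ (in $T$, i.e. in $\mathcal{L}_\be^{\lambda_0}$) and $K$ is existentially closed in every regular $\mathcal{L}_\be^{\lambda_0}$-extension $K\subseteq L$ with $L\subseteq\mathfrak{C}$. Throughout we view $K$ as a $\bv$-subfield of $\mathfrak{C}$; every $\bv$-field embeds in a model of $T$, so this costs nothing. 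So the task is to match the two halves: ``$\dcl$-closed'' against ``strict'', and ``regularly closed in $T$'' against ``$\ce_{\operatorname{reg}}$-closed''.

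($\Rightarrow$) Assume $K$ is $\bpac$. Since $K=\dcl^\be(K)$, Lemma \ref{acl} gives $K=\dcl^{\scf}(K)$. Hence $K$ is closed under $\lambda_0$ and $K\subseteq\mathfrak{C}$ is separable. Then $K^\partial=\mathfrak{C}^\partial\cap K=\mathfrak{C}^p\cap K=K^p$, using strictness of $\mathfrak{C}$ (Lemma \ref{extension by pth roots}); so $K$ is strict.

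For $\ce_{\operatorname{reg}}$-closedness, let $K\subseteq L$ be a $\bv$-extension that is regular as a pure field extension. Embed $L$ into a model of $T$ and pass to the monster over $K$; we want $L$ realised inside $\mathfrak{C}$ over $K$. This needs the extension $K\subseteq\mathfrak{C}$ to be an $\mathcal{L}^{\lambda_0}_\be$-substructure, together with quantifier elimination (Theorem \ref{qe}), so that $\tp(K)$ is determined. Replacing $L$ by a model of $T$ that contains it as a separable, still regular, extension, we get a regular $\mathcal{L}^{\lambda_0}_\be$-extension by Lemma \ref{lemma: reg is pure}. Then $K$ is existentially closed in it in $\mathcal{L}^{\lambda_0}_\be$, hence in $\mathcal{L}_\be$, and so in $L$.

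($\Leftarrow$) Assume $K$ is strict and $\ce_{\operatorname{reg}}$-closed. First show $K=\dcl(K)$. By Lemma \ref{acl} this means $K=\dcl^{\scf}(K)$, i.e. $K\subseteq\mathfrak{C}$ is separable and $\lambda$-closed. Separability should follow from Lemma \ref{lindisj} applied to $K\subseteq\mathfrak{C}$: $\mathfrak{C}^\partial=\mathfrak{C}^p$ and $K^\partial$ are linearly disjoint over $K^\partial=K^p$. The main obstacle is that a $\ce_{\operatorname{reg}}$-closed field should also be relatively separably closed in $\mathfrak{C}$, so that $\dcl$ adds nothing. I would argue as follows. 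Take $a\in\mathfrak{C}$ separably algebraic over $K$ with $K(a)\ne K$. Then $K(a)$ carries the induced $\bv$-structure (étale extension) and $K(a)\subseteq K(a)(t)$ is regular. The plan is to use $\ce_{\operatorname{reg}}$-closedness in a suitable regular extension to find a root in $K$ of the minimal polynomial, contradiction. Alternatively, and more likely the intended route, interpret $\dcl$ only up to Galois-fixed elements: $\dcl^{\scf}(K)$ is the perfect-hull part given by the $\lambda$-functions, which strictness plus separability already handle, since elements of $\dcl^{\scf}(K)$ are purely inseparable over $K$ and hence lie in $K$ by separability. This second route is what I would try first.

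Then $K$ is regularly closed in $T$. Take a regular extension $K\subseteq L$ in $T$. By Lemma \ref{lemma: reg is pure} it is field-theoretically regular, so $K$ is existentially closed in $L$ in $\mathcal{L}_\be$. Both fields are strict ($L\subseteq\mathfrak{C}$ is $\lambda_0$-closed), so Lemma \ref{lemma: reg} upgrades this to $\mathcal{L}^{\lambda_0}_\be$. Fact \ref{fact: regcl} then concludes.
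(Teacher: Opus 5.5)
\paragraph{Verdict.} Your decomposition is the same as the paper's one-line proof via Fact~\ref{fact: regcl}: match $K=\dcl(K)$ with strictness, and match regular closedness in $T$ with $\ce_{\operatorname{reg}}$-closedness. However, the ($\Rightarrow$) direction has a genuine gap at ``Replacing $L$ by a model of $T$ that contains it as a separable, still regular, extension''. No such model exists in general. Models of $T$ are separably closed, so they contain $K^{\mathrm{sep}}$ and are regular over $K$ only if $K$ is separably closed, which a $\bpac$ field need not be.

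\paragraph{Why $\dcl(L)$ of an arbitrary embedding also fails.} Embedding $L$ into $\mathfrak{C}$ over $K$ and using $\dcl(L)$ does not work either. The $\dcl$ of a non-strict $\bv$-field can pick up elements of $K^{\mathrm{sep}}\setminus K$. For example, take $\be=\ka[\varepsilon]_\otimes$ with $\varphi$ trivial, and suppose $K$ is not separably closed. Let $\alpha\in K^{\mathrm{sep}}\setminus K$, let $s$ be transcendental over $K(\alpha)$ with $\partial s=\alpha$, and put $t=s^p$. Then $L=K(t)$ with $\partial t=0$ is purely transcendental, hence field-theoretically regular over $K$. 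Yet after embedding $K(\alpha,s)$ into $\mathfrak{C}$ over $K$, one has $\lambda_0(t)=s$ and $\alpha=\partial s\in\dcl(L)\cap\acl(K)$. So Lemma~\ref{lemma: reg is pure} can only be applied to $\lambda_0$-closed (i.e.\ strict) $L$, and you must get there carefully.

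\paragraph{The missing step: pass to a strict hull first.}
\begin{enumerate}
\item Using Lemma~\ref{extension by pth roots}, adjoin $p$-th roots of constants, iterating $\omega$ times and taking unions. Unions remain $\bv$-fields since $\varphi$ is universal. This gives a strict $\bv$-field $L'\supseteq L$ that is purely inseparable over $L$.
\item $L'$ is still regular over $K$. By Lemma~\ref{lindisj} (as $K$ is strict and $(L')^p\subseteq (L')^\partial$), the extension $K\subseteq L'$ is separable. Hence any $\beta\in L'$ algebraic over $K$ is separable over $L$ and purely inseparable over $L$, so it lies in $L\cap K^{\mathrm{alg}}=K$.
\item Since $K$ is strict, $K\cap M^p=K^p$ for every model $M\supseteq K$ of $T$. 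So the $\mathcal{L}^{\lambda_0}_\be$-structure of $K$ is intrinsic, and Theorem~\ref{qe} lets you embed $L'$ into $\mathfrak{C}$ over $K$.
\item In $\mathfrak{C}$, $L'$ is $\lambda_0$-closed, so $\dcl(L')=L'$, and $K\subseteq L'$ is regular in the sense of Definition~\ref{def: reg}.
\item Being $\bpac$, $K$ is existentially closed in $L'$ in $\mathcal{L}^{\lambda_0}_\be$, hence in $\mathcal{L}_\be$, hence in $L$.
\end{enumerate}

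\paragraph{The ($\Leftarrow$) direction.} This works via your second route, but your ``main obstacle'' is a red herring. Your first route would try to show $K$ is relatively separably closed in the separably closed field $\mathfrak{C}$, i.e.\ that $K$ is separably closed, which is false in general. The direct argument is as follows. Strictness gives $K\cap\mathfrak{C}^p=K\cap\mathfrak{C}^\partial=K^p$. So $K$ is $\lambda_0$-closed and $\mathfrak{C}/K$ is separable by Lemma~\ref{lambda_0 => separable}. Then $\dcl^{\scf}(K)$ lies in $K^{\mathrm{sep}}$ and is fixed by $\aut(\mathfrak{C}/K)$, which induces all of $\operatorname{Gal}(K^{\mathrm{sep}}/K)$. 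Hence $\dcl^{\scf}(K)=K$, and Lemma~\ref{acl} gives $\dcl(K)=K$. ``Elements of $\dcl^{\scf}(K)$ are purely inseparable'' is not the reason. Also, in your appeal to Lemma~\ref{lindisj}, it is $K$, not $K^\partial$, that is linearly disjoint from $\mathfrak{C}^p$ over $K^p$.
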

\begin{proof}
By Fact \ref{fact: regcl} $K$ is $\bpac$ if and only if $K=\operatorname{dcl} (K)$ and regularly closed. 
\end{proof}
Finally, from Theorem \ref{mainThm2}, Example \ref{example: defclass} and Lemma \ref{lemma: PAC final reduction} we immediately get the following result.

\begin{theorem}\label{thm: PAC}
For a $\be_\varphi$-field $\left( K , \partial \right)$ the following are equivalent:
\begin{enumerate}
    \item $\left( K , \partial \right)$ is $\bpac$.
    \item $K$ is strict and every absolutely irreducible $\be_\varphi$-variety over $\left( K , \partial \right)$ has a $K$-rational $\be$-point.
\end{enumerate}
In particular, being $\bpac$ is an elementary property.
\end{theorem}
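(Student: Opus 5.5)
The proof combines Lemma \ref{lemma: PAC final reduction}, Theorem \ref{mainThm2} applied with $\ce = \ce_{\operatorname{reg}}$, and Example \ref{example: defclass}(2). By Lemma \ref{lemma: PAC final reduction}, $(K,\partial)$ is $\bpac$ if and only if $K$ is strict and $\ce_{\operatorname{reg}}$-closed. It therefore suffices to show that, for a strict $\be_\varphi$-field, $\ce_{\operatorname{reg}}$-closedness is equivalent to ``every absolutely irreducible $\be_\varphi$-variety over $(K,\partial)$ has a $K$-rational $\be$-point''.

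Since $\neat$ holds, the pair $(\be,\varphi)$ should be nice (case (1) of Definition \ref{def: nice}, when $\varphi$ comes from a comonad or is trivial). Then Assumptions \ref{ass} and \ref{ass2} hold by Remark \ref{remark: many examples} and Lemma \ref{lemma: Blocus exist}, so Theorem \ref{mainThm} applies with $\ce=\ce_{\operatorname{reg}}$. Combining Lemma \ref{reduction to finitely generated} with Proposition \ref{ecfin <=> points}, $K$ is $\ce_{\operatorname{reg}}$-closed if and only if every $\be_\varphi$-variety over $K$ of type $\ce_{\operatorname{reg}}$ has a $K$-rational $\be$-point. By Example \ref{example: defclass}(2), a $\be_\varphi$-variety $(V,s)$ is of type $\ce_{\operatorname{reg}}$ exactly when $K\subseteq K(V)$ is regular, i.e.\ when $V$ is absolutely irreducible. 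This gives (1)$\Leftrightarrow$(2).

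For elementarity, strictness is the first-order sentence $(\forall x)\big(\partial(x)=\iota(x)\to(\exists y)\, y^p=x\big)$, where $p$ is the characteristic; in characteristic zero strictness is vacuous or is read with $p=1$. The condition ``every absolutely irreducible $\be_\varphi$-variety has a $\be$-point'' is expressed, for each bound $(n,k,d)$ from Remark \ref{remark: coding}, by a sentence of the form
$$(\forall \bar{c})\big(\psi_{n,k,d}(\bar{c})\wedge\chi^{\mathrm{abs}}_{n,k,d}(\bar{c})\to(\exists x)\,\theta_{\bar{c}}(x)\big).$$
Here $\psi_{n,k,d}$ is the $\be_\varphi$-variety formula from Remark \ref{remark: coding}, $\chi^{\mathrm{abs}}_{n,k,d}$ is the quantifier-free absolute irreducibility formula from Example \ref{example: defclass}(2), and $\theta_{\bar{c}}(x)$ says that $x\in V_{\bar{c}}$ and $\partial(x)=s_{\bar{c}}(x)$. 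Taking the union of these schemes with the axioms of strict $\be_\varphi$-fields axiomatizes the $\bpac$ fields.

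The main obstacle is justifying that Theorem \ref{mainThm2} applies, i.e.\ that Assumptions \ref{ass} and \ref{ass2} hold for $\ce_{\operatorname{reg}}$ under $\neat$ with the given $\varphi$. If $\varphi$ is an arbitrary iterativity condition, niceness may fail. In that case I would first try to verify Assumption \ref{ass} directly, as in Proposition \ref{prop: pure assumption}. The step that needs checking is that the $\be$-field $K(b)$ obtained there remains regular over $K$. This holds because $K(b)\subseteq L$ and subextensions of regular extensions are regular. A second, minor point is that the language $\mathcal{L}_\be^{\lambda_0}$ used for $\bpac$ differs from the language $\mathcal{L}_\be$ used for $\ce$-closedness. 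This should already be absorbed into Lemma \ref{lemma: PAC final reduction} via Lemma \ref{lemma: reg}, which applies because $K$ is strict.
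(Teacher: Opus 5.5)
Your proof is essentially the paper's: it combines Lemma~\ref{lemma: PAC final reduction}, Theorem~\ref{mainThm2} (via Theorem~\ref{mainThm}) for $\ce_{\operatorname{reg}}$, and Example~\ref{example: defclass}(2), and your explicit axiom scheme spells out the elementarity that the paper leaves implicit. One caveat on your fallback for non-nice $\varphi$: regularity of $K(b)$ over $K$ is the trivial part, whereas the real content of Assumption~\ref{ass} is producing an operator $\partial'$ on $K(b)$ that still satisfies $\varphi$; that is exactly what niceness supplies, and the paper also tacitly assumes it by invoking Theorem~\ref{mainThm2}.
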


\begin{remark}\label{remark: PAC solved}
Theorem \ref{thm: PAC} applies in particular to the theory $\operatorname{DCF}_{p}$ for $p>0$, which was tackled by Hoffmann-Kowalski in \cite{PAC}. Their axiomatization of $\operatorname{DCF}_{p, 0}-\operatorname{PAC}$ is a bit complicated (see the paragraph above \cite[Theorem 4.34]{PAC}). The source of this complication is twofold: 
\begin{itemize}
    \item There is a certain equalizer variety $E$ involved (see also Remark \ref{remark: easy axioms}).
    \item The axioms use a certain notion of ``admissible tuples''. It is not at all obvious that this notion is expressible in first-order logic and in fact the proof that it is uses a slight enhancement of a theorem by Tamagawa (see \cite[Proposition 11.4.1]{field_arithmetic}).
\end{itemize}
Our axioms are much more transparent and also the proof that they work is much easier. Additionally, Theorem \ref{thm: PAC} goes well beyond the case of $\operatorname{DCF}_{p}$
\end{remark}

\begin{remark}
$\ce_{\operatorname{reg}}$-closed $\bv$-fields are also interesting for other classes of $\be$ than considered in this Section. For example, let $G$ be a finitely generated group and let $\left( \be, \varphi \right)$ be the pair describing $G$-fields. Then, $\ce_{\operatorname{reg}}$-closed $\bv$-fields are (by definition) the same as \textit{pseudo existentially closed} $G$-fields (see \cite{torsion}). They implicitly appear in Hrushovski's proof that the theory of fields with two commuting automorphisms does not have a model companion, since what is really proved there is the much stronger statement that there is no $\aleph_0$-saturated pseudo existentially closed $\left(\mathbb{Z}\times\mathbb{Z}\right)$-field. We send the reader to \cite[Section 6.2]{torsion} for more details. In any case, using Theorem \ref{mainThm2} we get that for a finite group $G$ the class of pseudo existentially closed $G$-fields is elementary.
\end{remark}

\subsection{Separably \texorpdfstring{$\be_\varphi$}{Bphi}-closed fields}

In \cite{SDCF} Ino and León Sánchez considered separably differentially closed field, i.e. differential field which are closed in separable differential field extensions (here separable is in the sense of pure field). Among other things, they prove that this class of field is elementary and give a very nice axiomatization of its theory $\operatorname{SDCF}_p$, similar to the axiomatization of $\operatorname{DCF}_0$ by Blum (see \cite{blum}) or the axiomatization of $\operatorname{DCF}_0$ by Wood (see \cite{Wo1}). They also give a geometric axioms for $\operatorname{SDCF}_p$. 

This clearly fits into our context. Fix some coordinate $\ka$-algebra scheme $\be$ and an iterativity condition $\varphi$. Let $\ce_{\operatorname{sep}}$ be the class of all separable (in the field-theoretic sense) extensions of $\bv$-fields. Let us say that a $\bv$-field $\left( K, \partial \right)$ is \textbf{separably $\bv$-closed} if it is $\ce_{\operatorname{sep}}$-closed. Immediately from Theorem \ref{mainThm2} we get the following.

\begin{cor}\label{cor: sbcf}
Assume that $\left( \be, \varphi \right)$ is nice. Then, for a $\bv$-field $\left( K, \partial \right)$ the following are equivalent:
\begin{enumerate}
    \item $\left( K, \partial \right)$ is separably $\bv$-closed.
    \item Every separable $\bv$-variety over $\left( K, \partial \right)$ has a $K$-rational $\bv$-point.
\end{enumerate}
In particular, there is an $\mathcal{L}_\be$-theory $\operatorname{S}\bv\operatorname{CF}$ whose models are precisely separably $\bv$-closed $\bv$-fields.
\end{cor}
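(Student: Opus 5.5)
The plan is to obtain Corollary \ref{cor: sbcf} as a direct specialization of Theorem \ref{mainThm2} with $\ce=\ce_{\operatorname{sep}}$, which is item (3) of Example \ref{example: defclass}. Since $\left( \be, \varphi \right)$ is nice, Remark \ref{remark: many examples} gives Assumption \ref{ass} for $\ce_{\operatorname{sep}}$, and Lemma \ref{lemma: Blocus exist} gives Assumption \ref{ass2}. Definability of $\ce_{\operatorname{sep}}$ is the content of Example \ref{example: defclass}(3). With these in hand, Theorem \ref{mainThm} shows that $\left( K,\partial \right)$ is $\ce_{\operatorname{sep}}$-closed exactly when every $\bv$-variety of type $\ce_{\operatorname{sep}}$ has a $K$-rational $\be$-point. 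The resulting axiom scheme, with one sentence for each bound $(n,k,d)$ from Remark \ref{remark: coding}, quantifies over codes $\bar c$ satisfying both the $\bv$-variety formula $\psi$ and the formula defining type $\ce_{\operatorname{sep}}$. This scheme then serves as $\operatorname{S}\bv\operatorname{CF}$.

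The one remaining point is to identify ``$\bv$-variety of type $\ce_{\operatorname{sep}}$'' with ``separable $\bv$-variety''. By definition, type $\ce_{\operatorname{sep}}$ means that the field extension $K\subseteq K\left( V \right)$ is separable. I will take this to be the meaning of ``$V$ is separable'' in the paper's terminology, namely that the function field $K\left( V \right)$ of the $K$-irreducible variety $V$ is separable over $K$. If the intended notion is instead geometric, I expect the standard criterion to close the gap: $K\left( V\right)/K$ is separable iff $K\left( V \right)$ is linearly disjoint from $K^{1/p}$ over $K$, iff $V$ stays reduced after base change to $K^{1/p}$ (equivalently, to $K^{\operatorname{sep}}$-independent purely inseparable extensions). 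This is also how definability was argued in Example \ref{example: defclass}(3), through $\operatorname{SCF}$.

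I expect essentially no obstacle; the proof is bookkeeping. The only point that needs care is checking that Proposition \ref{prop: pure assumption} really yields Assumption \ref{ass} for $\ce_{\operatorname{sep}}$. The subextension $K\subseteq K\left( \partial_L a \right)$ of the separable extension $K\subseteq L$ is again separable, so the $\bv$-structure produced there lies in $\ce_{\operatorname{sep}}$. This is exactly what Remark \ref{remark: many examples} records.
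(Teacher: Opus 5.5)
Your proposal is correct and follows the paper's route: the paper obtains the corollary immediately from Theorem \ref{mainThm2} applied to $\ce_{\operatorname{sep}}$, which is item (3) of Example \ref{example: defclass}, with niceness supplying Assumptions \ref{ass} and \ref{ass2} exactly as you describe. Your reading of ``separable $\bv$-variety'' as ``of type $\ce_{\operatorname{sep}}$'', i.e.\ $K(V)/K$ separable, is the intended one, and going through Theorem \ref{mainThm} with the ``type $\ce$'' qualifier made explicit is sound.
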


\begin{remark}\label{remark: blum}
In particular setting $\be = \ka\left[ \varepsilon\right]_\otimes$ where $\ka\left[ \varepsilon\right] = \ka\left[ X\right] /\left( X^2 \right)$ we get a new axiomatization of the theory $\operatorname{SDCF}_p$ considered in \cite{SDCF}. There are two axiomatizations given in \cite{SDCF}. One of them roughly corresponds to our New Axioms for $\be\operatorname{CF}$ 
. The second one is similar to the axiomatization of $\operatorname{DCF}_0$ given by Blum or the axiomatization of $\operatorname{DCF}_p$ (where $p>0$ is prime) given by Wood, and it speaks about the solvability of certain differential equations in one variable. An analogous results for derivations of the Frobenius map was proven by the author in \cite[Theorem 3.12]{DerFro2}.
\end{remark}

\subsection{Largeness}
Recall that a field $K$ is called \textbf{large} if every $K$-variety defined with a smooth $K$-rational point has a Zariski-dense set of $K$-rational points. In particular, the class of large fields includes pseudo algebraically closed fields (hence also separably and algebraically closed fields) and real closed fields. In general, large fields are in some sense the widest class of ``tame fields'' and they have many remarkable properties (e. g. the regular inverse Galois problem is solvable over large fields). Moreover being large is an elementary property in the language of rings. We send the reader to \cite{Pop} for a survey on large fields.

In \cite{diffLarge} León Sánchez and Tressl introduce a differential counterpart of large fields and among many other thing proved that the class of differentially large fields is elementary. We want to generalize this result to the case of $\bv$-fields. Let $\ce_{\operatorname{ec}}$ be the class of all $\bv$-extensions $K\subseteq L$ such that $K$ is existentially closed in $L$ as a pure field.
\begin{definition}
Let $\left( K, \partial \right)$ be a $\be_\varphi$-field.  We say that $K$ is \textbf{$\be_\varphi$-large} if it is large as a pure field and $\ce_{\operatorname{ec}}$-closed.
\end{definition}
Specializing to $\left( \be, \varphi \right)$ describing partial differential fields, we recover the definition given in \cite{diffLarge}.

We aim to show that for a nice pair $\left( \be, \varphi \right)$ being $\be_\varphi$-large is an elementary property. For partial differential fields in characteristic zero this was done in \cite{diffLarge} and for ordinary differential fields of positive characteristic in \cite{ordLarge}. 

The following lemma is well-known, see e.g \cite[Fact 2.3]{Pop}.

\begin{lemma}\label{lemma: ec dense}
Let $K$ be a field and $V$ a $K$-variety. Then, the following are equivalent:
\begin{enumerate}
    \item $K$ is existentially closed in $K\left( V \right)$.
    \item $V\left( K \right)$ is Zariski-dense in $V$.
\end{enumerate}
\end{lemma}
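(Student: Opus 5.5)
We prove the two implications separately. We may assume $V$ is irreducible, so that $K\left( V \right)$ makes sense, and we work inside a big algebraically closed field $\Omega$. We fix a generic point $a$ of $V$ over $K$ and identify $K\left( V \right)$ with $K\left( a \right)$.

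$\left( 1 \right)\Longrightarrow\left( 2 \right)$: It suffices to show that every nonempty basic open subset of $V$ has a $K$-rational point. Let $f$ be a nonzero regular function on $V$, given by a $K$-polynomial. Let $I_K\left( V \right)$ be generated by $g_1,\dotsc,g_k$. Consider the quantifier-free $\mathcal{L}_{\operatorname{rng}}\left( K \right)$-formula
$$\bigwedge_{i\leqslant k} g_i\left( x \right)=0 \wedge f\left( x \right)\neq 0.$$
The generic point $a$ satisfies it in $K\left( V \right)$, since $f\left( a \right)\neq 0$ exactly because $f\notin I_K\left( V \right)$. By existential closedness, some $c\in K$ satisfies it. Then $c\in V\left( K \right)$ and $f\left( c \right)\neq 0$. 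Since the sets $\{f\neq 0\}$ form a basis of the Zariski topology on $V$, the set $V\left( K \right)$ is dense.

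$\left( 2 \right)\Longrightarrow\left( 1 \right)$: Let $\theta\left( \bar{y} \right)$ be a quantifier-free $\mathcal{L}_{\operatorname{rng}}\left( K \right)$-formula with a solution $\bar{b}$ in $K\left( V \right)$. By the standard reduction (the ring analogue of Lemma \ref{lemma: red of formulas}), we may replace inequations $h\neq 0$ by $h\cdot z=1$. So we may assume $\theta$ is a conjunction of polynomial equations $h_j\left( \bar{y} \right)=0$. Each coordinate of $\bar{b}$ is a rational function $p_i\left( a \right)/q\left( a \right)$ with a common denominator $q\in K\left[ X \right]$, where $q\left( a \right)\neq 0$.

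Clearing denominators, the condition $h_j\left( p\left( a \right)/q\left( a \right) \right)=0$ becomes $H_j\left( a \right)=0$ for suitable polynomials $H_j:=q^{N}h_j\left( p/q \right)\in K\left[ X \right]$. Hence each $H_j$ lies in $I_K\left( V \right)$ and vanishes on all of $V$. By density, pick $c\in V\left( K \right)$ with $q\left( c \right)\neq 0$. Then $\bar{b}':=p\left( c \right)/q\left( c \right)\in K$ satisfies $q\left( c \right)^N h_j\left( \bar{b}' \right)=H_j\left( c \right)=0$, hence $h_j\left( \bar{b}' \right)=0$, i.e. $K\models\theta\left( \bar{b}' \right)$.

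The only mildly delicate step is the reduction to positive formulas and the bookkeeping with the common denominator. Everything else is routine, so no real obstacle is expected.
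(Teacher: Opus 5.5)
Your proof is correct. The paper gives no proof of this lemma (it cites it as well known from Pop's survey), but your argument is the standard one and is the pure-field version of the paper's own proof of Proposition \ref{ecfin <=> points}: the generic point witnesses $x\in V\wedge f(x)\neq 0$ in one direction, and in the other a positive formula is specialized to a $K$-point of $V$ off the zero set of the common denominator $q$.

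The only point left implicit is that you test density only against $K$-polynomials. This also gives density in the Zariski topology over $\Omega$: write $f=\sum_j\lambda_j f_j$ with $f_j\in K[X]$ and the $\lambda_j\in\Omega$ linearly independent over $K$. If $f$ does not vanish on $V$, then some $f_j$ does not vanish on $V$, and any $c\in V(K)$ with $f_j(c)\neq 0$ satisfies $f(c)\neq 0$.
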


\begin{cor}\label{large}
Assume that $\left( \be, \varphi \right)$ is nice. Then, for a $\bv$-field $\left( K, \partial \right)$ the following are equivalent:
\begin{enumerate}
    \item $\left( K, \partial \right)$ is $\bv$-large.
    \item $K$ is large and every $\bv$-variety over $\left( K, \partial \right)$ with a smooth $K$-rational point has a $K$-rational $\bv$-point.
\end{enumerate}
Moreover, being $\bv$-large is an elementary property.
\end{cor}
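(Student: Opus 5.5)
The plan is to apply Theorem \ref{mainThm2} (or rather Theorem \ref{mainThm} together with Proposition \ref{ecfin <=> points}) to the class $\ce_{\operatorname{ec}}$, and then to translate ``type $\ce_{\operatorname{ec}}$'' into the geometric condition ``has a smooth $K$-rational point'' when $K$ is large.

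\emph{Step 1: the finiteness assumptions.} Assumption \ref{ass2} holds for nice pairs by Lemma \ref{lemma: Blocus exist}. For Assumption \ref{ass}, take $K\subseteq L$ in $\ce_{\operatorname{ec}}$ and a tuple $a\in L$. Proposition \ref{prop: pure assumption} gives a $\bv$-structure on $K_1=K(\partial a)\subseteq L$ extending $\partial|_{K(a)}$. Since $K$ is existentially closed in $L$ as a pure field, it is also existentially closed in the intermediate field $K_1$. So $(K,K_1)\in\ce_{\operatorname{ec}}$ and we may take $b=\partial a$.

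\emph{Step 2: definability, which is where largeness enters.} By Lemma \ref{lemma: ec dense}, a $\bv$-variety $(V,s)$ is of type $\ce_{\operatorname{ec}}$ iff $V(K)$ is Zariski dense in $V$. For a large field $K$, this holds iff $V$ has a smooth $K$-rational point. One direction is the definition of largeness. For the other, density of $V(K)$ forces $V$ to be absolutely irreducible and $K(V)/K$ to be regular, hence separable. Then the smooth locus is a nonempty open set, and it meets $V(K)$.

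Smoothness of a $K$-point is a first-order condition in the coded data: van den Dries bounds give generators of $I(V)$, and we check the Jacobian rank at the point against $\dim V$. Dimension is definable uniformly in the parameters by Remark \ref{remark: coding}-style arguments. So ``$V$ has a smooth $K$-point'' is expressible by an existential formula in the code $\bar c$. This does not make $\ce_{\operatorname{ec}}$ definable for arbitrary $K$, but it is definable uniformly among large fields. That suffices, because largeness is itself elementary and the axiom scheme includes it.

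\emph{Step 3: conclusion.} Assume $K$ is large.
\begin{itemize}
\item By Lemma \ref{reduction to finitely generated}, $K$ is $\ce_{\operatorname{ec}}$-closed iff it is $(\ce_{\operatorname{ec}})_{\operatorname{fin}}$-closed.
\item By Proposition \ref{ecfin <=> points}, the latter holds iff every $\bv$-variety of type $\ce_{\operatorname{ec}}$ has a $K$-rational $\be$-point.
\item By Step 2, this is exactly condition (2).
\end{itemize}
For elementarity, axiomatize by: the $\bv$-field axioms, the largeness scheme, and for each $(n,k,d)$ the sentence saying that for every code $\bar c$ satisfying $\psi(\bar c)$ and ``$V$ has a smooth $K$-point'' there is a $\be$-point. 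This is a first-order scheme, so $\bv$-largeness is elementary.

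The main obstacle is Step 2. One must check that a dense set of $K$-points gives a smooth $K$-point, which needs separability, not just absolute irreducibility. One must also check that the smooth-point condition is uniformly definable in the codes.
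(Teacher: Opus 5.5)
Your proposal is correct and follows essentially the paper's argument. You verify Assumption \ref{ass} for $\ce_{\operatorname{ec}}$ via Proposition \ref{prop: pure assumption}, reduce to finitely generated extensions and $\be$-points via Lemma \ref{reduction to finitely generated} and Proposition \ref{ecfin <=> points}, pass between ``type $\ce_{\operatorname{ec}}$'' and ``has a smooth $K$-rational point'' using Lemma \ref{lemma: ec dense} and largeness, and obtain elementarity from largeness being elementary together with definability of smooth points. Your Step 2 also usefully spells out a step the paper leaves implicit in its $(2)\Rightarrow(1)$ direction: a Zariski-dense set of $K$-points forces $K(V)/K$ to be regular, and this is what produces a smooth $K$-rational point.
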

\begin{proof}
$(1) \Longrightarrow (2)$ Assume $(1)$ and let $\left( V, s \right)$ be a $\bv$-variety such that $V$ has a smooth $K$-rational point. Since $K$ is large, $V$ has in fact a Zariski-dense set of such points, thus by Lemma \ref{lemma: ec dense} we have that $K$ is existentially closed (as a pure field) in $K\left( V \right)$. Since $K$ is $\ce_{\operatorname{ec}}$-closed, $K$ is existentially closed in $K\left( V \right)$ as a $\bv$-field. By Lemma \ref{generic point is sharp} there is a $K\left( V \right)$-rational $\bv$-point of $\left( V, s \right)$, thus by existential closedness there is already a $K$-rational one.

$(2) \Longrightarrow (1)$ Assume $(2)$. Since $\left( \be, \varphi \right)$ is nice, by Lemma \ref{reduction to finitely generated} and Proposition \ref{prop: pure assumption} we have to check only that $K$ is $\left( \ce_{\operatorname{ec}} \right)_{\operatorname{fin}}$-closed. But this follows from $(2)$ using Lemma \ref{lemma: ec dense} as in the previous implication.

The moreover claim follows since being large is an elementary property and ``$V$ has smooth $K$-rational point'' is a definable condition in the sense of Remark \ref{remark: coding}.
\end{proof}





\subsection{The theory \texorpdfstring{$\operatorname{DCF}_{0, m}^{\operatorname{fin}}$}{DCFfin}}\label{subsec: sad}
Let us note a peculiar example which pops out naturally from our work. For simplicity of the exposition, let us work with fields of characteristic zero with $m\ge 2$ commuting derivations, which we will also call \textbf{partial differential fields} (suppressing $m$ from the nomenclature). Denote by $\operatorname{DF}_{0, m}$ the theory of such fields in the language $\mathcal{L}_D = \mathcal{L}_{\operatorname{rng}} \cup \left\{ \partial_1, \dotsc , \partial_m \right\}$. McGrail proved in \cite{mcgrail} that $\operatorname{DF}_{0, m}$ has a model companion $\operatorname{DCF}_{0, m}$. Also, by Theorem \ref{mainThm} we get that there is a theory $\operatorname{DCF}_{0, m}^{\operatorname{fin}}$ whose models are exactly those $K\models\operatorname{DF}_{0, m}$ which are existentially closed in any $L\models \operatorname{DF}_{0, m}$ such that $K\subseteq L$ is finitely generated as a pure field extension. There is a natural question whether $\operatorname{DCF}_{0, m} = \operatorname{DCF}_{0, m}^{\operatorname{fin}}$. Note that by Theorem \ref{mainThm2} this is true for $m=1$ or for $p$ instead of $0$.

Unfortunately the answer is no. Consider the field $K=\mathbb{Q}\left( x_1,\dotsc x_m \right)$ with the obvious derivations. By \cite[Theorem 2]{notfin} the (consistent) equation
$$\partial_1\left( f \right) = \left( 1 - \frac{x_1}{x_2} \right) \partial_2 \left( f \right) +1$$
has no solution $f$ such that $K\left\langle f\right\rangle$ has finite transcendence degree over $K$, where $K\left\langle f\right\rangle$ is the partial differential field generated over $K$ by $f$.

Using this one can construct a model $M$ of $\operatorname{DCF}_{0, m}^{\operatorname{fin}}$ which is not a model of $\operatorname{DCF}_{0, m}$. We will invoke Fraïssé theory for this purpose, though one can construct $M$ directly, similarly to how one construct an existentially closed model of an inductive theory, although some care is needed. 

Let us recall some classical material about Fraïssé theory. We refer to Fraïssé's original paper \cite{Fraisse}.

\begin{definition}
Let $\mathcal{L}$ be a countable language. We say that a countable $\mathcal{L}$-structure $M$ is \textbf{ultrahomogeneous} if any isomorphism between finitely generated substructures of $M$ extends to an automorphism of $M$.
\end{definition}

\begin{definition}
Let $M$ be an $\mathcal{L}$-structure. The \textbf{age of $M$} is the class $\operatorname{age}\left( M \right)$ of all finitely generated $\mathcal{L}$-structures which embed into $M$. Equivalently, it is (the closure under isomorphic images of) the class of all finitely generated $\mathcal{L}$-substructures of $M$.
\end{definition}

\begin{definition}
Let $\mathcal{C}$ be a class of finitely generated $\mathcal{L}$-structures. If
\begin{enumerate}
    \item $\mathcal{C}$ is closed under isomorphisms and finitely generated substructures,
    \item $\mathcal{C}$ has only countably many members up to isomorphism,
    \item $\mathcal{C}$ has the joint embedding property,
    \item $\mathcal{C}$ has the amalgamation property,
\end{enumerate}
the we say that $\mathcal{C}$ is a \textbf{Fraïssé class}.
\end{definition}

The following is the celebrated Fraïssé theorem.

\begin{fact}
Let $\mathcal{C}$ be a Fraïssé class. Then there is a unique (up to isomorphism) countable ultrahomogeneous structure $M$ whose age is equal $\mathcal{C}$ called the \textbf{Fraïssé limit} of $\mathcal{C}$. 
\end{fact}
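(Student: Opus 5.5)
The statement is Fraïssé's theorem, which the paper cites from \cite{Fraisse}; I will prove it by the classical two-part argument: first build a countable structure by iterated amalgamation, then get uniqueness by back-and-forth. Throughout, fix countably many representatives $A_0, A_1, \dotsc$ of the isomorphism types in $\mathcal{C}$, which is possible by condition (2).

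\textbf{Existence.} I will build a chain $M_0 \subseteq M_1 \subseteq \dotsb$ of members of $\mathcal{C}$. I enumerate, with infinitely many repetitions, all tasks of one of two forms:
\begin{itemize}
\item embed the next representative $A_i$ into the current structure;
\item given an embedding $f\colon A \to B$ with $A \subseteq M_n$ finitely generated and $B\in\mathcal{C}$, extend it.
\end{itemize}
At each stage only countably many such tasks arise, since each $M_n$ is finitely generated and has countably many finitely generated substructures, so a dovetailing bookkeeping handles all of them. Joint embedding disposes of tasks of the first kind: get a $D\in\mathcal{C}$ into which both $M_n$ and $A_i$ embed. Amalgamation disposes of tasks of the second kind: amalgamate $M_n$ and $B$ over $A$ to get $M_{n+1}$ in which $B$ embeds over $A$.

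Let $M=\bigcup_n M_n$. Its age is contained in $\mathcal{C}$ because any finitely generated substructure lies in some $M_n$ and $\mathcal{C}$ is closed under substructures (condition (1)). Conversely, every $A_i$ embeds into $M$, so the age equals $\mathcal{C}$. By construction $M$ has the \emph{extension property}: for finitely generated $A\subseteq B$ in $\mathcal{C}$, every embedding $A\to M$ extends to an embedding $B\to M$.

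\textbf{Ultrahomogeneity and uniqueness.} The key lemma is that any two countable structures with age $\mathcal{C}$ and the extension property are isomorphic. Moreover, any isomorphism between finitely generated substructures extends to an isomorphism between them. I will prove this by back-and-forth. Enumerate both structures and alternately add the next element to the domain or to the range of a finite partial isomorphism $p\colon A\to A'$. To add an element $b$ on one side, note that the substructure $\langle A,b\rangle$ lies in $\mathcal{C}$; applying the extension property on the other side to $p^{-1}$ yields the required extension. The union of the resulting chain of partial isomorphisms is an isomorphism.

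Applying the lemma with both structures equal to $M$ gives ultrahomogeneity. For uniqueness, observe that any countable ultrahomogeneous $N$ with age $\mathcal{C}$ has the extension property. Indeed, given $A\subseteq B$ and $f\colon A\to N$, the structure $B$ embeds into $N$ via some $g$ because $B$ is in the age. Then $f\circ (g|_A)^{-1}$ is an isomorphism between finitely generated substructures of $N$. It extends to an automorphism $\sigma$ of $N$, and $\sigma\circ g$ extends $f$. The lemma then gives $N\cong M$.

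\textbf{Main obstacle.} The delicate points are the bookkeeping in the existence step, which must ensure every extension task over every finitely generated substructure is eventually met, and the fact that in the back-and-forth step "finitely generated" rather than "finite" is the right notion. This works because $\langle A,b\rangle$ is again finitely generated, so it stays within $\mathcal{C}$.
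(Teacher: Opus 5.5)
Your proof is correct. The paper gives no proof of its own: it quotes the statement as a Fact from Fra\"{\i}ss\'e's paper \cite{Fraisse}. Your argument is the classical one. You build the limit as the union of a chain, using JEP and AP with dovetailing, and isolate the extension property. You then use back-and-forth for ultrahomogeneity, and reduce uniqueness to the extension property via ultrahomogeneity.

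A few points deserve tightening.

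First, you implicitly use $\mathcal{C}\neq\emptyset$ when you enumerate representatives $A_0,A_1,\dotsc$ and start the chain. The paper's definition of a Fra\"{\i}ss\'e class omits this, though it holds in the application. The empty class is not the age of any structure.

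Second, counting the finitely generated $A\subseteq M_n$ does not by itself show that only countably many tasks arise at stage $n$. You may take $B$ among the representatives $A_j$. An embedding of the finitely generated $A$ into the countable $A_j$ is determined by the images of finitely many generators, so there are countably many. This, together with the countability of each $M_n$, is where the countability of $\mathcal{L}$ enters.

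Third, in the back-and-forth the partial maps should be isomorphisms between finitely generated substructures, not finite ones; you acknowledge this later. The direction also depends on the side:
\begin{itemize}
\item when you add $b$ to the domain, apply the extension property of the \emph{second} structure to $p$;
\item when you add $b$ to the range, apply the extension property of the first structure to $p^{-1}$.
\end{itemize}
For the plain isomorphism claim, start from an embedding into $N$ of the substructure of $M$ generated by the first element of the enumeration. Such an embedding exists because the two ages coincide.
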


For our purposes, we work in the language $\mathcal{L}_{D, \operatorname{inv}}=\mathcal{L}_{D} \cup \left\{ \operatorname{inv} \right\}$ where ``$\operatorname{inv}$'' is a unary function symbol. Any partial differential field $K$ is naturally an $\mathcal{L}_{D, \operatorname{inv}}$-structure, where we interpret $\operatorname{inv}$ as follows:
$$\operatorname{inv}^K( x ) = 
\begin{cases}
0& \mbox{if } x=0 \\
\frac{1}{x} & \mbox{otherwise}
\end{cases}.$$
Of course, any extension of partial differential fields is also an $\mathcal{L}_{D, \operatorname{inv}}$-extension. We define $\mathcal{C}$ as the class of all partial differential fields which are finitely generated (as pure fields over $\mathbb{Q}$).

\begin{lemma}
The class $\mathcal{C}$ is a Fraïssé class.
\end{lemma}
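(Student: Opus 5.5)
I will verify the four Fraïssé conditions for $\mathcal{C}$, the class of partial differential fields (characteristic zero, $m$ commuting derivations) that are finitely generated as pure fields over $\mathbb{Q}$, viewed as $\mathcal{L}_{D,\operatorname{inv}}$-structures.

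\emph{Preliminary observation.} Because $\operatorname{inv}$ is in the language, every $\mathcal{L}_{D,\operatorname{inv}}$-substructure of a partial differential field is a differential subfield. So the finitely generated $\mathcal{L}_{D,\operatorname{inv}}$-structures inside $\mathcal{C}$ are exactly the differential subfields generated, in the differential sense, by finitely many elements. The first step is to show that a differential subfield of some $L\in\mathcal{C}$ generated by a finite tuple is again finitely generated as a pure field. For this I will use a Noetherianity argument. Since $\mathbb{Q}\subseteq L$ is finitely generated, every intermediate field $F$ with $\mathbb{Q}\subseteq F\subseteq L$ is finitely generated over $\mathbb{Q}$: it has finite transcendence degree, and the finite-degree extension of a purely transcendental subfield is finitely generated. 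Applied to $F=\mathbb{Q}\langle a\rangle$, this gives closure under finitely generated substructures, and closure under isomorphism is trivial. The same argument shows that every structure in $\mathcal{C}$ is a finitely generated $\mathcal{L}_{D,\operatorname{inv}}$-structure.

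\emph{Countability.} A member of $\mathcal{C}$ has the form $\mathbb{Q}(a_1,\dots,a_n)$, so it is determined up to isomorphism by two pieces of data. The first is the ideal of the tuple $a$ over $\mathbb{Q}$, which is finitely generated by Hilbert's basis theorem. The second is the values $\partial_i(a_j)$, each written as a rational function in $a$ with rational coefficients. Both are finite strings over a countable alphabet, so $\mathcal{C}$ has only countably many members up to isomorphism.

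\emph{Amalgamation and JEP.} Let $L\subseteq M,N$ be extensions in $\mathcal{C}$. I will pass to algebraic closures: $L^{\alg}$ carries unique commuting derivations extending those of $L$, because algebraic extensions in characteristic zero are étale and commutation is preserved by uniqueness. Since $L^{\alg}\subseteq M L^{\alg}$ is regular, I can form the fraction field $F$ of $ML^{\alg}\otimes_{L^{\alg}}NL^{\alg}$. On $F$ the derivations $\partial_i\otimes 1+1\otimes\partial_i$ extend uniquely, and they commute because their commutators vanish on generators. This is the same argument as in Proposition \ref{amal}, which applies since derivations are local and commutation is an iterativity condition checkable on generators.

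The amalgam itself will be the compositum of the images of $M$ and $N$ inside $F$. This compositum is a field finitely generated over $\mathbb{Q}$. It is closed under the $\partial_i$, since $\partial_i$ of a generator lies in $M$ or in $N$ and quotients of such elements are handled by the quotient rule. So it lies in $\mathcal{C}$ (cf.\ Remark \ref{remark: fin gen amal}). JEP then follows from amalgamation over the common substructure $\mathbb{Q}$: every member of $\mathcal{C}$ contains $\mathbb{Q}$ with trivial derivations, and $\mathbb{Q}$ is itself in $\mathcal{C}$.

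The main obstacle is the amalgamation step, specifically checking that the tensor-product derivations commute and descend to a finitely generated amalgam. The fact that differential subfields of finitely generated fields are finitely generated is the other point that needs care, and the intermediate-field finiteness argument above handles it.
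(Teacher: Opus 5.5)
Your proposal is correct and follows the paper's proof. The hereditary property and countability are routine, amalgamation comes from the tensor-product construction of Proposition \ref{amal} cut down to the finitely generated compositum as in Remark \ref{remark: fin gen amal}, and JEP follows by amalgamating over the initial object $\mathbb{Q}$ with trivial derivations; you supply the details the paper calls clear, though the finiteness of $F$ over $\mathbb{Q}(t)$ deserves one line (e.g.\ $F$ lies in the relative algebraic closure of $\mathbb{Q}(t)$ in $L$, which is finite over $\mathbb{Q}(t)$).
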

\begin{proof}
Clearly $\mathcal{C}$ is closed under finitely generated substructures and one easily sees that $\mathcal{C}$ has only countably many objects up to isomorphism. By Proposition \ref{amal} and Remark \ref{remark: fin gen amal} the class $\mathcal{C}$ has the amalgamation property and since $\mathcal{C}$ has an initial object (namely $\mathbb{Q}$ with $m$ trivial derivations) it has also the joint embedding property.
\end{proof}
Denote the Fraïssé limit of $\mathcal{C}$ by $M$. Note that $M$ is a field, since any of its finitely generated substructure is one.
\begin{prop}
$M$ is a model of $\operatorname{DCF}_{0, m}^{\operatorname{fin}}$ which is not a model of $\operatorname{DCF}_{0, m}$.
\end{prop}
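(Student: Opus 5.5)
We have to prove two things: $M$ is existentially closed in every finitely generated (as a pure field) partial differential field extension, and $M$ is not differentially closed.

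\textbf{$M$ is a model of $\operatorname{DCF}_{0,m}^{\operatorname{fin}}$.} Let $M\subseteq L$ be an extension of partial differential fields with $L=M(a)$ for a finite tuple $a$. Let $\phi(\bar x)$ be a quantifier-free $\mathcal{L}_D(M)$-formula realised by $\bar b\in L$.

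1. Pick a finitely generated substructure $A\in\mathcal{C}$ of $M$ containing the parameters of $\phi$. Such an $A$ exists because every finite subset of $M$ lies in a finitely generated substructure, and every finitely generated substructure is a member of the age, hence of $\mathcal{C}$.
2. Let $B$ be the $\mathcal{L}_{D,\operatorname{inv}}$-substructure of $L$ generated by $A$, $a$ and $\bar b$. Since the derivations commute and $\partial_i$ of generators lies in $M(a)$, I must check that $B$ is finitely generated as a pure field over $\mathbb{Q}$.
3. This is the step I expect to be the main obstacle. The point is that $\partial_i(a)$ and $\partial_i$ of the generators of $A$ are rational functions in finitely many elements of $M$ and $a$. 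So I enlarge $A$ to a finitely generated subfield $A'\subseteq M$ containing the coefficients needed to express $\partial_i(a_j)$ as elements of $A'(a)$, and then iterate.

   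Iterating adds new coefficients, because $\partial_i$ of those coefficients must also be controlled. The closure of a finitely generated field under $\partial$ need not be finitely generated, so pure iteration may not terminate. To escape this, I replace $M$ by a member of the age: $M$ is a countable union of an increasing chain of members of $\mathcal{C}$. I then use Remark \ref{remark: comonads are nice}-style finiteness, i.e. that each finitely generated subfield of $M$ which is closed under the derivations sits in $\mathcal{C}$.

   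Concretely, I take $A\in\mathcal{C}$ to be closed under the $\partial_i$. This is possible since $A$ is a substructure, so it is automatically closed under $\partial$ and $\operatorname{inv}$. Then $A(a)\subseteq L$ need not be $\partial$-closed. However, $\partial_i(a)\in M(a)$ involves only finitely many elements of $M$. I enlarge $A$ to the substructure $A_1$ generated by these elements, which is again in $\mathcal{C}$ and hence a finitely generated field. The expressions for $\partial_i(a)$ do not change, so $A_1(a)$ is closed under $\partial$: the $\partial_i$ of elements of $A_1$ stay in $A_1$, and $\partial_i(a)\in A_1(a)$. Thus $B=A_1(a,\bar b)=A_1(a)\in\mathcal{C}$, which resolves the worry.
4. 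Now $A_1\subseteq B$ is an embedding in $\mathcal{C}$, and $A_1\subseteq M$. By the extension property of the Fraïssé limit (a consequence of ultrahomogeneity), the embedding $A_1\to M$ extends to an embedding $B\to M$ over $A_1$.
5. The image of $\bar b$ in $M$ realises $\phi$, because quantifier-free formulas are preserved under embeddings.

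Hence $M$ is existentially closed in $L$.

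\textbf{$M$ is not a model of $\operatorname{DCF}_{0,m}$.} We use the age of $M$.

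1. Since $\mathbb{Q}(x_1,\dots,x_m)$ with the obvious derivations lies in $\mathcal{C}$, it embeds into $M$. Identify $K$ with its image.
2. Suppose $M\models\operatorname{DCF}_{0,m}$. Then the consistent equation $\partial_1 f=(1-x_1/x_2)\partial_2 f+1$ has a solution $f\in M$.
3. The differential field $K\langle f\rangle$ is contained in some finitely generated substructure of $M$. That substructure lies in $\mathcal{C}$, so it is a finitely generated field, and therefore $K\langle f\rangle$ has finite transcendence degree over $K$.
4. This contradicts \cite[Theorem 2]{notfin}. Hence $M\not\models\operatorname{DCF}_{0,m}$.
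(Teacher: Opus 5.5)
Your proof is correct and follows the paper's route. For the failure of $\operatorname{DCF}_{0,m}$, every finitely generated substructure of $M$ lies in $\mathcal{C}$, so the equation from \cite{notfin} has no solution in $M$; for $\operatorname{DCF}^{\operatorname{fin}}_{0,m}$, you embed a finitely generated differential subfield of the extension into $M$ over the parameters using ultrahomogeneity. Your step 3 also supplies a justification the paper omits, namely that this subfield (the paper's $N_0$) belongs to $\mathcal{C}$.

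Two small repairs are needed. First, $A_1$ must also contain the $M$-coefficients expressing $\bar b$ as rational functions of $a$; otherwise $\bar b\in A_1(a)$ is not guaranteed. Second, in the second part, passing from ``consistent'' to ``solvable in $M$'' needs amalgamation of $M$ with a solution field over $K$, or quantifier elimination in $\operatorname{DCF}_{0,m}$; the paper makes the amalgamation step explicit.
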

\begin{proof}
We will first prove the latter claim. Since the finitely generated substructures of $M$ are (up to isomorphism) precisely all models of $\operatorname{DF}_{0, m}$, we see that the equation
$$\partial_1\left( f \right) = \left( 1 - \frac{x_1}{x_2} \right) \partial_2 \left( f \right) +1$$
has no solution in $M$ (see the beginning of this section). On the other hand, this equation has a solution in some partial differential field $N$, hence (by taking an amalgam of $M$ and $N$) also in an extension of $M$. Thus $M$ is not existentially closed as a partial differential field, hence $M$ is not a model of $\operatorname{DCF}_{0, m}$.

As for the former claim, let $M\subset N$ be an extension of partial differential fields which is finitely generated as an extension of pure fields. We want to prove that $M$ is existentially closed in $N$ in the language $\mathcal{L}_D$. Let $\phi\left( \bar{x}, \bar{y} \right)$ be a quantifier-free $\mathcal{L}_D$-formula and let ${a}\in N$ and ${b}\in M$ be tuples such that $N\models \phi\left( {a}, {b} \right)$. Let $N_0$ be the partial differential field generated by ${a}$ and ${b}$. Then $N_0\in \mathcal{C}$, thus there is an embedding $f\colon N_0\to M$. Set ${b}'=f( {b} )$. Then, $f$ restricts to an isomorphism between the substructures of $M$ generated by ${b}$ and ${b}'$, hence by ultrahomogeneity there is an automorphism $\sigma$ of $M$ such that $\sigma\left( {b} \right) = {b}'$. Since $N_0 \models \phi\left( {a}, {b} \right)$ and $f$ is an embedding we have that $M\models \phi\left( f\left( {a} \right), {b}' \right)$. Applying $\sigma^{-1}$ yields $M\models \phi\left( (\sigma^{-1}\circ f)\left( {a} \right), {b} \right)$, thus $\phi ( \bar{x}, {b} )$ is satisfiable in $M$. Therefore $M$ is existentially closed in $N$ and thus $M$ is a model of $\operatorname{DCF}_{0, m}^{\operatorname{fin}}$.  
\end{proof}

In more fancy terms, models of $\operatorname{DCF}_{0, m}^{\operatorname{fin}}$ are differential fields $K$ (inside some monster model of $\operatorname{DCF}_{0, m}$) with the property that every finite dimensional type over $K$ is finitely satisfiable in $K$.

\bibliographystyle{plain}
\bibliography{biblio}

\end{document}